\def\S{S}
\def\tr{\text{tr}}
\newtheorem{Rem}{Remark}
\newtheorem{theorem}{Theorem}
\newtheorem{lemma}{Lemma}
\newtheorem{corollary}{Corollary}
\newtheorem{definition}{Definition}
\newtheorem{remark}{Remark}
\newtheorem{proposition}{Proposition}
\def\tr{\text{tr}}
\newcommand{\red}[1]{{\color{red}#1}}
\newcommand{\blue}[1]{{\color{blue}#1}}
\newcommand{\dist}{\mbox{{\rm dist}}}
\newcommand{\cad}{c\`adl\`ag }
\def \eps{\varepsilon}
\def \bN{\mathbb{N}}
\def \bR{\mathbb{R}}
\def \bL{\mathbb{L}}
\def \bE{\mathbb{E}}
\def \bF{\mathcal{F}}
\def \bF{\mathbb{F}}
\def \cY{\mathcal{Y}}
\def \cL{\mathcal{L}}
\def \cE{\mathcal{E}}
\def \cZ{\mathcal{Z}}
\def \cF{\mathcal{F}}
\def \cY{\mathcal{Y}}
\def \tpi{\widetilde{\pi}}
\def \bP{\mathbb{P}}
\def \1{\mathbf{1}}
\newcommand{\Ymin}{Y^{\min}}
\newcommand{\Zmin}{Z^{\min}}
\newcommand{\Pmin}{\psi^{\min}}
\newcommand{\Mmin}{M^{\min}}
\newcommand{\Umin}{U^{\min}}
\newcommand{\Yminx}{Y^{\min,x}}
\newcommand{\Zminx}{Z^{\min,x}}
\title{ Continuity problem for singular BSDE with random terminal time\footnote{This work is supported by TUBITAK 
(The Scientific and Technological Research Council of Turkey) through project number 118F163. We are grateful for this support.}}
\author[1]{Sharoy Augustine Samuel \thanks{sharoys@gmail.com}}
\author[2]{Alexandre Popier\thanks{alexandre.popier@univ-lemans.fr}}
\author[3]{Ali Devin Sezer\thanks{devin@metu.edu.tr}}
\affil[1]{\small Institute of Applied Mathematics, Middle East Technical University}
\affil[2]{\small Laboratoire Manceau de Math\'ematiques, Le Mans Universit\'e, Avenue O.~Messiaen, 72085~Le~Mans cedex 9, France}
\affil[3]{\small Institute of Applied Mathematics, Middle East Technical University}
\date{\today}
\begin{document}
\maketitle

\begin{abstract}
We study a class of nonlinear BSDEs with a superlinear driver process $f$ adapted to a filtration ${\mathbb F}$ and over a random time interval $[\![0,S]\!]$ where $S$ is a stopping time of ${\mathbb F}$. The terminal condition $\xi$ is allowed to take the value $+\infty$, i.e., singular. Our goal is to show existence of solutions to the BSDE in this setting. We will do so by proving that the minimal supersolution to the BSDE is a solution, i.e., attains the terminal values with probability $1$. We consider three types of terminal values: 1) Markovian: i.e., $\xi$ is of the form $\xi = g(\Xi_S)$ where $\Xi$ is a continuous Markovian diffusion process and $S$ is a hitting time of $\Xi$
and $g$ is a deterministic function 2) terminal conditions of the form $\xi = \infty \cdot {\bm 1}_{\{\tau \le S\}}$ and 3) $\xi_2 = \infty \cdot {\bm 1}_{\{ \tau >S \}}$ where $\tau$ is another stopping time. For general $\xi$ we prove the minimal supersolution is continuous at time $S$ provided that ${\mathbb F}$ is left continuous at time $S$. We call a stopping time $S$ solvable with respect to a given BSDE and filtration if the BSDE has a minimal supersolution with terminal value $\infty$ at terminal time $S$. The concept of solvability plays a key role in many of the arguments. Finally, we discuss implications of our results on the Markovian terminal conditions to solution of nonlinear elliptic PDE with singular boundary conditions.
\end{abstract}

\vspace{0.5cm}
\noindent \textbf{2020 Mathematics Subject Classification.} 35J75, 60G40, 60G99, 60H30, 60H99.

\smallskip
\noindent \textbf{Keywords.} Backward stochastic differential equation, stopping time, singularity, continuity problem, Green's function.

\vspace{0.3cm}

\section{Introduction and definitions}

A backward stochastic differential equation (BSDE)
is a stochastic differential equation (SDE) with a prescribed terminal condition.
They have been intensively studied since the seminal papers \cite{bism:73,pard:peng:90}; they arise naturally in 
stochastic optimal control problems (see among others \cite{yong:zhou:99}), they provide a probabilistic representation of semi-linear partial differential equations (PDE) extending the Feynman-Kac formula (\cite{pard:rasc:14}) and they have found numerous applications in finance and insurance \cite{delo:13,elka:peng:quen:97}. 

If the driver term of the BSDE has superlinear growth the solution of the BSDE can blow up in finite time, this allows one to specify 
$\infty$ as a possible terminal value for such BSDE; when the terminal value is allowed to take $\infty$ it is called ``singular.''
In \cite{ahmadi2019backward,krus:popi:15,popi:06,krus:popi:seze:18}, we study nonlinear BSDE
with singular terminal condition at a deterministic terminal time $T$. 
Such BSDE generalize parabolic diffusion-reaction PDE with singular final trace (\cite{grae:hors:sere:18,marc:vero:99,popi:17}) 
and they are a key tool in optimal stochastic control problems with terminal constraints 
(\cite{anki:jean:krus:13,grae:hors:sere:18,krus:popi:15} and the references therein).

In this paper, we focus on BSDE with singular terminal conditions over a random time horizon.
We adopt the general framework for BSDE with terminal singular values established in \cite{krus:popi:14,krus:popi:15,krus:popi:17} 
and consider BSDE 
of the following form
\begin{equation} \label{eq:bsde}
dY_t  = - f(t,Y_t,Z_t,U_t) dt + Z_t dW_t + \int_\cE U_t(e) \tpi(de,dt) + d M_t, Y_S = \xi,
\end{equation}
where $W$ is a $d$-dimensional Brownian motion and $\tpi$ is a compensated Poisson random measure on a probability space $(\Omega,\cF,\bP)$ 
with a filtration $\bF = (\cF_t)_{t\geq 0}$; the unknown that is sought is the quadruple $(Y,Z,U,M).$
The filtration $\bF$ is supposed to be complete and right continuous. The solution component $M$ is required to be a local martingale orthogonal 
to $\tpi$. The function $f:\Omega\times \bR \times \bR^d \times \mathcal B^2_\mu \to \bR$ is called the {\it generator} (or {\it driver}) of 
the BSDE. 
Finally $\S$ is a stopping time of the filtration $\bF$ and $\xi$ is an ${\mathcal F}_S$ measurable random variable, which is singular,
i.e., 
${\mathbb P}(\{\xi=\infty\}) > 0.$
Precise conditions on all of these terms are spelled out in subsections
\ref{ss:integrable} and \ref{ssect:sing_supersol} below.
A quadruple $(Y,Z,U,M)$ is said to be a supersolution of \eqref{eq:bsde} if it satisfies the first equation in \eqref{eq:bsde} and
\begin{equation} \label{eq:term_cond_super_sol}
\liminf_{t\to +\infty} Y_{t\wedge \S} \geq \xi , \text{ almost surely},
\end{equation}
holds.
A supersolution $(\Ymin,\Zmin,\Umin,\Mmin)$ is called minimal if $\Ymin \le Y$ for any other supersolution $(Y,Z,U,M).$
We say $(Y,Z,U,M)$ solves the BSDE with singular terminal condition $\xi$ if it satisfies the first equation in \eqref{eq:bsde} and
\begin{equation} \label{eq:termcondsol}
\lim_{t\to +\infty} Y_{t\wedge \S}= \xi;
\end{equation}
i.e., to go from a supersolution to a solution we need to replace the $\liminf$ in \eqref{eq:term_cond_super_sol} with $\lim$ and
$\ge$ with $=$.
In the rest of this paper whenever we refer to the ``solution'' of a BSDE
with a singular terminal value, it will be in the sense of 
\eqref{eq:termcondsol}.
The condition \eqref{eq:termcondsol} means that the process $Y$ is continuous at time $\S$; for this reason we refer to the problem
of establishing that a candidate solution satisfies \eqref{eq:termcondsol} as the {\it ``continuity problem''.} 
Just as BSDE over deterministic time intervals generalize parabolic PDE,
BSDE over random time intervals are generalizations of elliptic PDE; we provide further comments on this connection,
on the motivation for the study
of BSDE over random time horizon with singular terminal values and on the implication of continuity results for BSDE theory
as well as constrained stochastic optimal control at the end of this subsection.

We call a terminal condition ``Markovian'' if it is
of the form $\xi = g(\Xi_S)$ where, 
$g:{\mathbb R}^d \mapsto {\mathbb R}_+ \cup \{\infty\}$,
 $\Xi$ is a Markov diffusion process and $S$ is the first time $\Xi$ hits 
a smooth $\partial D$, $D \subset {\mathbb R}^d$.
For such exit times, existence of minimal supersolutions for \eqref{eq:bsde}
are proved in \cite{krus:popi:15} for arbitrary terminal condition
(see subsection \ref{ssect:sing_supersol} below).
The work
\cite{popi:07} proves that these minimal supersolutions are in fact solutions
for the case ${\mathbb F}={\mathbb F}^W$
and for the specific generator $f(y) = -y|y|^{q-1}$ and for 
Markovian terminal conditions.
The works \cite{krus:popi:14,krus:popi:17} develop solutions to \eqref{eq:bsde} when $\xi$ belongs to some integrability space. 
The goal of the present work is to prove that the minimal supersolution
of \eqref{eq:bsde} satisfies \eqref{eq:termcondsol} (and therefore is a solution)
for several classes of singular terminal conditions and several assumptions on $\S$. We outline these classes and assumptions in the following paragraphs.

In two previous works \cite{krus:popi:seze:18} and \cite{ahmadi2019backward} that prove continuity results for deterministic
terminal times, two of the main ingredients are
the minimal supersolution $Y^{\min,\infty}$ 
with terminal condition $\infty$ at terminal time and the apriori upperbounds on supersolutions; both of these, are readily
available  in the prior literature for deterministic terminal times 
(for the one dimensional Brownian case treated in \cite{krus:popi:seze:18}, $Y^{\min,\infty}$ is
deterministic and has an explicit formula). For random terminal times the existence of $Y^{\min,\infty}$ and apriori upperbounds are known
only for exit times of Markov diffusions from smooth domains. 
One of the main ideas of the present work is to 
impose the existence of $Y^{\min,\infty}$ as an assumption on the stopping time $S$ and base most of our arguments on this assumption. 
We call the terminal stopping time $S$ {\em solvable} with respect
to the BSDE \eqref{eq:bsde} if there exists a supersolution to the BSDE with terminal value $\infty$ at terminal
time $S$ (see Definition \ref{d:solvable_rtt}), 
deterministic times and exit times of
Markov diffusion processes are solvable for a wide range of BSDE;
times that have a strictly 
positive density around $0$ are not solvable \cite{krus:popi:15}.
Many of our arguments are based on this solvability concept; 
some basic consequences
of solvability are given in Section \ref{s:solvablelemmas}.
In particular, if $S$ is solvable, the BSDE \eqref{eq:bsde} has a minimal
supersolution for any singular terminal condition $\xi \ge 0$ 
(Lemma \ref{lem:solvability_min_sol}).
In addition to $S$ being solvable, in many arguments we assume $\bF$ to be left continuous at $S$ for the following reason.
Because the filtration $\bF$ is assumed to be general (apriori only completeness and right-continuity is assumed) 
there is no way to control the jumps of the additional local martingale component $M$ of the solution at the terminal time.
To avoid such jumps, we suppose that $\bF$ is left-continuous at time $S$.

We now indicate the main results of the present work.
In
Section \ref{sect:existence_limit} we assume $S$ to be solvable and consider 
the problem of proving the existence of the $\lim_{t\rightarrow \infty} Y^{\min}_t$ for an arbitrary singular terminal
condition $\xi \ge 0.$
When $\S$ is deterministic, in \cite{popi:16}, the existence of a limit for $\Ymin$ is proved under some additional conditions on generator 
$f$. Here we show that these assumptions are also sufficient for a random terminal time (Section \ref{sect:existence_limit}) provided that it is solvable.

Section \ref{sect:markov_setting} focuses on Markovian terminal conditions.
To the best of our knowledge, \cite{popi:07} is the only paper that proves continuity results for a singular terminal condition at a random time $\S$; \cite{popi:07} assumes $f(y) = -y|y|^q$, $\xi$ to be Markovian and ${\mathbb F}$ Brownian. 
The results in Section \ref{sect:markov_setting} generalize the results in \cite{popi:07} to a general filtration ${\mathbb F}$ and driver $f$
keeping the terminal condition Markovian.
An important step is a bound on the expected value of an integral over the solution processes and $\dist(\Xi)$, where
$\dist(x)=  \inf_{y \in \partial D}|x-y|$ (see Lemma \ref{lem:major_Zmin_with_g}).
One of the main ingredients in the proof is the apriori upperbound on $Y$ derived
in \cite{krus:popi:15}.
When ${\mathbb F}$ is Brownian and $f$ is deterministic, the solution of the BSDE with a Markovian terminal 
condition can be used construct a viscosity solution of an associated elliptic PDE. This is discussed in subsection
\ref{ss:relatedPDE}.

Sections \ref{sect:term_cond_xi_1} and \ref{sect:term_cond_xi_2} focus on the continuity problem  for
non-Markovian terminal conditions of the form
$\xi_1 = \infty \cdot {\bm 1}_{\{\tau \le S\}}$ (Section \ref{sect:term_cond_xi_1}) and
 $\xi_2 = \infty \cdot {\bm 1}_{\{\tau > S\}}$ (Section \ref{sect:term_cond_xi_2})
 where $\tau$ is another stopping time of ${\mathbb F}.$
The results in these sections generalize results from \cite{krus:popi:seze:18} (the one dimensional
Brownian case) and \cite{ahmadi2019backward} (the general filtration, driver case) treating same type
of terminal conditions where $S$ is assumed to be deterministic. 
Events of the form $\{\tau \le S \}$ naturally arise when one modifies constraints on stochastic
optimal control problems based on the values the state process of the problem takes.
We refer to \cite{krus:popi:seze:18, ahmadi2019backward} for more comments
on why we pay particular attention to these type of non-Markovian terminal conditions. Solution of the continuity
problem for general
terminal conditions of the form $\infty \cdot {\bm 1}_A$ for arbitrary $A \in {\mathcal F}_S$ is an open problem even
for the one dimensional Brownian case and $S$ deterministic.

Section \ref{sect:term_cond_xi_1} provides two arguments to prove 
\begin{equation}\label{eq:cont}
\lim_{t\to +\infty} \Ymin_{t\wedge \S}= \xi_1.
\end{equation}
The first one is an adaptation of the argument given for the same type of terminal condition in \cite{ahmadi2019backward}.
It involves the construction of an auxiliary linear process that dominates $\Ymin$ and that is known to have
the desired limit property at the terminal time $S$.  The main assumption on $\tau$ for the construction of the upperbound
in \cite{ahmadi2019backward} is that  $\tau$ has bounded density at the terminal time; in the current setting this is replaced
with the assumption that the random variable ${\bm 1}_{\{\tau \le S\}} Y_\tau^\infty$ has a bounded $\varrho$-moment for some
$\varrho > 1$ (see \eqref{e:Lrhobound}). The other main ingredient in the construction of the upperbound process in
\cite{ahmadi2019backward} is the apriori upperbounds on the supersolution of BSDE; in the current context
this is replaced by the solvability assumption on $S$. Subsection \ref{ss:newxi1} presents a new argument for the terminal
value $\xi_1$ that is completely based on the original BSDE (i.e., it doesn't involve the solution  of an auxiliary linear BSDE).
To simplify arguments this subsection assumes ${\mathbb F}$ to be generated only by the Brownian motion $W$.
The only assumption on $\tau$ is that it be solvable. Let $Y^{\tau,\infty}$ be the supersolution of the BSDE with
terminal condition $\infty$ at terminal time $\tau$. The main idea of this argument is the use of the process $Y^{\tau,\infty}$
as an upperbound to prove \eqref{eq:cont}. Working directly with the original BSDE in constructing upperbounds can lead
to less stringent conditions on model parameters. As an example, we consider in subsection \ref{ss:anexamplexi1} 
the case $S=T$ and $\tau= \inf \{t: |W_t|  = L \}$ which was originally studied in \cite{krus:popi:seze:18} using essentially
a special case of the argument based on the linear auxiliary process which requires the $q$ parameter in assumption \ref{B2}
to satisfy $q >3.$ The new proof given subsection \ref{ss:anexamplexi1} based on the new argument based on solvable stopping times
establishes \eqref{eq:cont} for the minimal supersolution assuming only $ q> 2.$

The argument in Section \ref{sect:term_cond_xi_2}
that proves that the minimal supersolution
corresponding to $\xi_2$ is in fact a solution follows closely the
argument given for the same type of terminal condition in \cite{ahmadi2019backward} for the case $S=T$ deterministic. The assumptions in this section are: 
$S$ is solvable
and ${\mathbb P}(S = \tau ) = 0$;  no solvability
is required for $\tau.$

BSDE with random terminal times are a generalization of elliptic semi-linear PDE
(extension of the Feynman-Kac formula, see \cite{darl:pard:97,pard:99,pard:rasc:14}). 
The works  \cite{dynk:kuzn:98,lega:97,marc:vero:98b,marc:vero:98a} show that the solution of some of these PDE can exhibit a singularity
of the following form on the boundary of the domain $D$
\[
\lim_{x\to \partial D} u(x) = +\infty.
\]
This boundary behavior generalizes to
\[
\lim_{t\to +\infty} Y_{t\wedge \S} =+\infty
\]
for BSDE of the form \eqref{eq:bsde}
(the clearest connection between the last two condition arises when $S$ is a first hitting time of a Markov diffusion process, see subsection \ref{ss:relatedPDE}).
Minimal supersolutions of
BSDE of the type \eqref{eq:bsde}
with $\infty$-valued terminal values at random terminal times can also be used to express the value function of
a class of stochastic optimal control problems over a
random time horizon $[[0,S]]$ with terminal constraints 
of the form ${\bm 1}_A \cdot q_S = 0$, for some $A \in {\mathcal F}_S$, where $q$ is the controlled
process (see \cite{krus:popi:15}).

Strengthening \eqref{eq:term_cond_super_sol} to \eqref{eq:termcondsol} (i.e., going from a supersolution to a solution)
has implications both for BSDE theory and for stochastic optimal control
applications. Consider two distinct terminal values $\xi^1$ and $\xi^2$; with \eqref{eq:term_cond_super_sol} it is impossible to tell
whether the corresponding minimal supersolutions are distinct. Whereas \eqref{eq:termcondsol} guarantees that distinct solutions $Y^1$ and
$Y^2$ correspond to distinct terminal values $\xi^1$ and $\xi^2.$
In stochastic optimal control / finance applications a non-tight optimal control (corresponding to strict inequality in
\eqref{eq:term_cond_super_sol}) can be interpreted as a strictly super-hedging trading strategy. Continuity results overrule such strategies.  
For more comments on these points we refer the reader to \cite{ahmadi2019backward}.

The next two subsections give the definitions, assumptions and results
 we employ from previous works (subsection \ref{ss:integrable}
concerns integrable terminal conditions and subsection \ref{ssect:sing_supersol}
concerns singular terminal values).
The only novelty  is Definition \ref{d:solvable_rtt},
the definition of a solvable stopping time.
We comment on possible future work in the Conclusion (Section \ref{s:conc}).

\subsection{Integrable data}\label{ss:integrable}

Let us start with the definition of solution for BSDE \eqref{eq:bsde}. 
\begin{definition}[Classical solution]  \label{def:sol_BSDE_rtt}
A process $(Y,Z,U,M) = (Y_t,Z_t,U_t,M_t)_{t\geq 0}$, such that 
\begin{itemize}
\item $Y$ is progressively measurable and \cad, 
\item $Z$ is a predictable process with values in $\bR^{d}$, 
\item $M$ is a local martingale orthogonal to $W$ and $\tpi$,
\item $U$ is also predictable and such that for any $t\geq 0$
$$ \int_0^t \int_\cE (|U_s(e)|^2\wedge |U_s(e)|) \mu(de) < +\infty,$$
\end{itemize}
is a solution to the BSDE \eqref{eq:bsde} with random terminal time $\S$ with data $(\xi;f)$ if on the set $\{t\geq \S \}$ $Y_t =\xi$ and $Z_t =U_t=M_t=0$, $\bP$-a.s., $t\mapsto f(t,Y_t,Z_t,U_t) \1_{t\leq T}$ belongs to $L^1_{loc}(0,\infty)$  for any $T\geq 0$, the stochastic integrals w.r.t. $W$ and $\tpi$ are well-defined and, $\bP$-a.s., for all $0\leq t \leq T$,
\begin{eqnarray} \nonumber
Y_{t\wedge \S} & = & Y_{T\wedge \S} + \int_{t\wedge \S}^{T\wedge \S} f(r,Y_r, Z_r,\psi_r) dr -\int_{t\wedge \S}^{T\wedge \S} Z_rdW_r \\ \label{eq:gene_BSDE_rand_time}
& -& \int_{t\wedge \S}^{T\wedge \S} \int_\cE U_r(e) \tpi(de,dr) - \int_{t\wedge \S}^{T\wedge \S} dM_r.
\end{eqnarray}
\end{definition}
For precise definitions on the stochastic integral w.r.t. $\tpi$ and orthogonality, we refer to \cite{jaco:shir:03}. 

In \cite{krus:popi:14,krus:popi:17}, Theorem 3 ensures the existence and uniqueness of a solution, under some conditions on the terminal value $\xi$ and on the generator $f$. Let us evoke them here. 

Firstly the following integrability condition is assumed: for some $r>1$
\begin{equation} \label{eq:int_cond_random_time}
\bE \left[ e^{r\rho \S} |\xi|^r + \int_0^\S e^{r\rho t} |f(t,0,0,{\bm 0})|^r dt \right] < +\infty.
\end{equation}
The constant $\rho$ depends on $r$ and on the generator $f$ (see Remark \ref{rem:def_rho}). We suppose that $f : \Omega \times [0,T] \times \bR \times \bR^{m} \times \mathfrak{B}^2_\mu \to \bR$ is a random measurable function, such that for any $(y,z,\psi)\in  \bR \times \bR^{m} \times \mathfrak{B}^2_\mu$, the process $f(t,y,z,\psi)$ is progressively measurable. For notational convenience we write $f^0_t=f(t,0,0,{\bm 0})$, where
$\mathbf 0$ denotes the null application from $\cE$ to $\bR$. The space $\mathfrak{B}^2_\mu$ is defined\footnote{For the definition of the sum of two Banach spaces, see for example \cite{krei:petu:seme:82}. The introduction of $\mathfrak{B}^2_\mu$ is motivated in \cite{krus:popi:17}.} as follows:
$$\mathfrak{B}^2_\mu =\begin{cases} \bL^2_\mu & \mbox{if } r \geq 2, \\ \bL^1_\mu+\bL^2_\mu & \mbox{if } r < 2, \end{cases}$$
where $\bL^p_\mu=\bL^p(\cE,\mu;\bR)$ is the set of measurable functions $\psi : \cE \to \bR$ such that
$$\| \psi \|^p_{\bL^p_\mu} = \int_{\cE} |\psi(e)|^p \mu(de)  < +\infty.$$

The next conditions are adapted from \cite{krus:popi:15}:
\begin{enumerate}[label=\textbf{(A\arabic*)}]
\item\label{A1} 
The function $y\mapsto f(t,y,z,\psi)$ is continuous and monotone: there exists $\chi \in \bR$ such that a.s. and for any $t \geq 0$ and $z \in \bR^m$ and $\psi \in \mathfrak{B}^2_\mu $
\begin{equation*}
(f(t,y,z,\psi)-f(t,y',z,\psi))(y-y') \leq \chi (y-y')^2.
\end{equation*}
\item \label{A2} For every $j > 0$ and $n \geq 0$, the process 
$$\Upsilon_t(j) = \sup_{|y|\leq j} |f(t,y,0,{\bm 0})-f^0_t|$$
is in $L^1((0,n)\times \Omega)$.
\item \label{A3}  There exists a progressively measurable process $\kappa = \kappa^{y,z,\psi,\phi} : \Omega \times \bR_+\times \bR^m \times \mathfrak{B}^2_\mu \to \bR$ such that
\begin{equation*}
f(t,y,z,\psi)-f(t,y,z,\phi) \leq \int_\cE (\psi(e)-\phi(e))  \kappa^{y,z,\psi,\phi}_t(e)  \mu(de)
\end{equation*}
with $\bP\otimes Leb \otimes \mu$-a.e. for any $(y,z,\psi,\phi)$, $-1 \leq \kappa^{y,z,\psi,\phi}_t(e)$
and $|\kappa^{y,\psi,\phi}_t(e)| \leq \vartheta(e)$ where $\vartheta$ belongs to the dual space of $\mathfrak{B}^2_\mu$, that is $\bL^2_\mu$ or $\bL^\infty_\mu \cap \bL^2_\mu$. 
\item \label{A4}  There exists a constant $L_f$ such that a.s. 
\begin{equation*}
|f(t,y,z,\psi)-f(t,y,z',\psi)| \leq L_z |z-z'|
\end{equation*} for any $(t,y,z,z',\psi)$.
\end{enumerate}
We denote
$$K^2 = \dfrac{1}{2} ( L_z^2+ L_\vartheta^2 ).$$
\begin{Rem}
We can replace {\rm \ref{A3}} by the Lipschitz condition: there exists a constant $L_\vartheta$ such that
\begin{equation*}
|f(t,y,z,\psi)-f(t,y,z,\phi)| \leq L_\vartheta \|\psi - \phi\|_{\mathfrak{B}^2_\mu }.
\end{equation*}
As explained at the beginning of \cite[Section 5]{krus:popi:14}, {\rm \ref{A3}} implies Lipschitz regularity of $f$ w.r.t. $\psi$, with $L_\vartheta$ equal to the norm $\|\vartheta\|_{(\mathfrak{B}^2_\mu)^*}$ of $\vartheta$ in the dual space of $\mathfrak{B}^2_\mu$. However {\rm \ref{A3}} is sufficient to ensure comparison principle for the solution of BSDEs (see \cite[Proposition 5.34]{pard:rasc:14}, \cite[Theorem 3.2.1]{delo:13} or \cite[Remark 4]{krus:popi:14} ). 
\end{Rem}

\begin{Rem} \label{rem:def_rho}
Constant $\rho$ in \eqref{eq:int_cond_random_time} satisfies
\begin{equation} \label{eq:int_cond_random_time_rho}
\rho > \nu = \nu(r) := \begin{cases} \chi + K^2 & \mbox{if } r\geq 2, \\ \chi + \frac{K^2}{r-1}+ \frac{L_\vartheta^2}{\eps(L_\vartheta,r)}& \mbox{if } r < 2.\end{cases}
\end{equation}
where the constant $0 < \eps(L_\vartheta,r) < r-1$ depends only on $L_\vartheta$ and $r$ (see \cite{krus:popi:17}, Section 4). The additional term in $\nu$ disappears if the generator does not depend on the jump part $\psi$ (that is, if $L_\vartheta=0$). 
Even if we can not compute $\eps$ explicitly, we know that 
\begin{equation*} 
0 < \eps \leq (r-1)  \left(2 (\alpha(L_\vartheta,r) + 1)^2-1 \right)^{-\frac{2-r}{2}},
\end{equation*}
and $\alpha(L_\vartheta,r)$ has to be chosen such that for any $x \geq \alpha(L_\vartheta,r)$, 
$$ \frac{1}{2^{r/2}}x^{r} -2^{r/2}- 1 - r(2L_\vartheta+1)x \geq 0. $$
The right-hand side is an increasing function w.r.t. $r \in (1,2)$ and decreasing w.r.t. $L_\vartheta\geq 0$. Hence when $r$ is close to one and $L_\vartheta$ is large, $\eps$ is be very small and thus $\rho$ becomes large. 

\end{Rem}

In \cite{krus:popi:14,krus:popi:17}, a second integrability condition is supposed:
\begin{equation} \label{eq:int_cond_random_time_2}
\bE\left[ \int_0^\S e^{r\rho t}|f(t, e^{-\nu t}\xi_t ,e^{-\nu t}\eta_t, e^{-\nu t}\gamma_t)|^r dt \right] < +\infty,
\end{equation}
where $ \xi_t = \bE (e^{\nu \S} \xi|\cF_t) $ and $( \eta, \gamma, N)$ are given by the martingale representation:
$$e^{\nu \S} \xi = \bE (e^{\nu \S} \xi)  + \int_0^\infty  \eta_s dW_s + \int_0^\infty \int_\cE  \gamma_s(e) \tpi(de,ds) +  N_\S$$
with
\begin{equation*}
\bE \left[ \left( \int_0^\infty | \eta_s|^2 ds +  \int_0^\infty \int_\cE | \gamma_s(e)|^2 \pi(de,ds) +[N]_\S \right)^{r/2} \right] < +\infty.
\end{equation*}

Now \cite[Theorem 3]{krus:popi:14,krus:popi:17} becomes
\begin{theorem} \label{thm:rand_time_exist_sol_BSDE}
Under Conditions {\rm \ref{A1}} to {\rm \ref{A4}} and if $\xi$ and $f^0$ satisfy assumptions \eqref{eq:int_cond_random_time} and \eqref{eq:int_cond_random_time_2} , BSDE \eqref{eq:bsde} has a unique solution $(Y,Z,U,M)$ in the sense of Definition \ref{def:sol_BSDE_rtt} such that for any $0\leq t \leq T$
\begin{eqnarray*} \nonumber
&&\bE \left[ e^{r\rho (t\wedge \S)} |Y_{t\wedge \S}|^r + \int_{0}^{T\wedge \S}  e^{p\rho s} |Y_{s}|^{r}  ds +  \int_{0}^{T\wedge \S}  e^{r\rho s} |Y_{s}|^{r-2} |Z_s|^2 \1_{Y_s\neq 0} ds \right] \\ \nonumber
&& + \bE \left[\int_{0}^{T\wedge \S} e^{r\rho s}  |Y_{s-}|^{r-2}  \1_{Y_{s-}\neq 0} d[ M]^c_s \right] \\  \nonumber
&& + \bE \left[   \int_{t\wedge \S}^{T\wedge \S}\int_\cE e^{r\rho s}  \left( |Y_{s-}|^2 \vee  |Y_{s-} + U_s(e)|^2 \right)^{r/2-1} \1_{|Y_{s-}| \vee  |Y_{s-} + U_s(e)| \neq 0}| U_s(e)|^2\pi(de,ds)\right] \\ \label{eq:rnd_term_time_apriori_estim}
&& + \bE\left[  \sum_{0 < s \leq T\wedge  \S}e^{r\rho s} |\Delta M_s|^2  \left( |Y_{s-}|^2 \vee  |Y_{s-} + \Delta M_s|^2 \right)^{r/2-1} \1_{|Y_{s-}| \vee  |Y_{s-} + \Delta M_s| \neq 0} \right]  
 < +\infty. 
\end{eqnarray*}
And
\begin{eqnarray*} \nonumber
&& \bE \left[ \left( \int_{0}^{\S} e^{2\rho s} |Z_s|^2 ds \right)^{r/2}+ \left(  \int_{0}^{ \S} e^{2\rho s}\int_\cE |U_s(e)|^2 \pi(de,ds)  \right)^{r/2}+  \left( \int_{0}^{ \S} e^{2\rho s} d[M]_s\right)^{r/2} \right] \\ \label{eq:rnd_term_time_apriori_estim_3}
&& \qquad  \leq C \bE\left[  e^{r\rho \S} |\xi|^p  +  \int_{0}^{\S} e^{r\rho s} |f(s,0,0,{\bm 0})|^r ds \right]. 
\end{eqnarray*}
The constant $C$ depends only on $r$, $K$ and $\chi$.
\end{theorem}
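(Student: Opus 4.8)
The plan is to follow the monotone-generator strategy of Pardoux and Darling--Pardoux in the weighted $L^r$-framework, handling the random horizon by stopping at $\S$ and letting the exponential weight $e^{r\rho\,\cdot}$ with $\rho>\nu$ supply the integrability and contractivity that a finite deterministic horizon would otherwise provide. First I would perform the exponential change of variables $\bar Y_t=e^{\nu t}Y_{t\wedge\S}$ (and likewise for $Z,U,M$), which turns the generator satisfying the one-sided Lipschitz bound \ref{A1} into a \emph{dissipative} one: the $-\nu\bar Y$ drift created by the reweighting, combined with the monotonicity constant $\chi$ and with $K^2=\tfrac12(L_z^2+L_\vartheta^2)$ coming from \ref{A3}--\ref{A4}, renders the effective generator strictly decreasing in the state variable precisely because $\rho>\nu=\nu(r)$ (Remark \ref{rem:def_rho}). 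The martingale-representation data $(\xi_t,\eta_t,\gamma_t,N)$ of $e^{\nu\S}\xi$ enter exactly so that the second integrability hypothesis \eqref{eq:int_cond_random_time_2} guarantees the inhomogeneous term of the reweighted equation lies in the relevant weighted space; this is what makes the solution space on which I set up the fixed point both nonempty and complete.

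The second step is the a priori estimate, which simultaneously yields the two displayed inequalities. Applying the Itô / Meyer--Itô formula to $e^{r\rho t}|Y_{t\wedge\S}|^r$ (for $r\ge 2$ this is the ordinary Itô formula; for $1<r<2$ one regularizes $|y|^r$ near the origin, which is the source of the indicators $\1_{Y_s\ne 0}$ and of the factors $(|Y_{s-}|^2\vee|Y_{s-}+U_s(e)|^2)^{r/2-1}$), the finite-variation part produces the term $r\,\sgn(Y)|Y|^{r-1}f$, bounded from above via \ref{A1}, together with the positive ``energy'' terms in $|Y|^{r-2}|Z|^2$, in the compensated jump measure, and in $d[M]$. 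Choosing $\rho>\nu$ lets the weight dominate the remaining linear-in-$f$ contribution through Young's inequality and the bound $|\kappa|\le\vartheta$ from \ref{A3}; a Burkholder--Davis--Gundy argument then upgrades the bracket bounds to the second displayed estimate, with a constant $C=C(r,K,\chi)$.

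For existence I would first assume in addition that $f$ is Lipschitz in $y$ (with constant independent of the other arguments); then on the Banach space defined by the norms appearing in the a priori estimate, the map sending a candidate $(Y,Z,U,M)$ to the solution of the BSDE with its $y$-dependence frozen is a contraction, the contraction factor being $<1$ exactly because of the strict gap $\rho-\nu>0$ over the (possibly infinite) horizon $[\![0,\S]\!]$; martingale representation together with the orthogonality requirement on $M$ supplies all four components, and Banach's theorem gives a unique fixed point. To discard the auxiliary Lipschitz-in-$y$ assumption and return to the merely monotone \ref{A1}, I would approximate $f$ by its Yosida/inf-convolution regularizations $f_n$ in the $y$-variable, solve each approximate equation, and observe that the a priori bounds are uniform in $n$; monotonicity \ref{A1} together with \ref{A2} then delivers the stability needed to pass to the limit (almost surely and in the weighted spaces), the limit solving \eqref{eq:bsde}. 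Uniqueness is read off from the same a priori estimate applied to the difference of two solutions, whose terminal and generator-difference data vanish.

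The step I expect to be the genuine obstacle is the passage to the limit in the monotone approximation over the random, possibly unbounded, horizon: one must show that the regularized solutions converge strongly enough --- in the $|Y|^{r-2}|Z|^2$ and jump/bracket energies, not merely weakly in $Y$ --- to identify the limit of $f_n(\cdot,Y^n,\cdot)$ with $f(\cdot,Y,\cdot)$, while simultaneously controlling the orthogonal martingale part $M^n$, whose jumps are not a priori tamed by the general filtration. The exponential weight and the strict inequality $\rho>\nu$ are precisely the ingredients that prevent mass from escaping to $t=\infty$ and that legitimize these limits; verifying this carefully, and checking that the Meyer--Itô remainder terms for $1<r<2$ are handled uniformly in the approximation, is the technical heart of the argument and is where the precise form of $\nu(r)$ in Remark \ref{rem:def_rho} is used.
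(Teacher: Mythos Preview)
The paper does not prove this theorem: it is stated as a direct quotation of \cite[Theorem 3]{krus:popi:14,krus:popi:17} (see the sentence immediately preceding the statement), so there is no ``paper's own proof'' to compare against. Your sketch is a faithful outline of the strategy actually used in those references --- exponential reweighting to make the generator dissipative, Meyer--It\^o formula on $e^{r\rho t}|Y_{t\wedge\S}|^r$ for the a priori estimates, a contraction in the weighted space under an auxiliary Lipschitz-in-$y$ hypothesis, and then removal of that hypothesis via monotone/Yosida approximation --- and your identification of the delicate step (strong convergence of the approximants in the energy norms over the random, possibly unbounded horizon, with the gap $\rho>\nu$ preventing loss of mass at infinity) is exactly where the work lies in \cite{krus:popi:14,krus:popi:17}. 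Nothing further is needed here beyond citing those papers.
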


In general \eqref{eq:int_cond_random_time_2} is not easy to check. 
Nonetheless if $\xi$ is bounded, we can take $\nu= 0$ in \eqref{eq:int_cond_random_time_2} and assume that:
\begin{equation*}
\bE\left[ \int_0^\S e^{r\rho t}|f(t, \xi_t ,\eta_t, \gamma_t)|^r dt \right] < +\infty,
\end{equation*}
where $ \xi_t = \bE ( \xi|\cF_t) $ and 
$$\xi = \bE ( \xi)  + \int_0^\infty  \eta_s dW_s + \int_0^\infty \int_\cE  \gamma_s(e) \tpi(de,ds) +  N_\S.$$

\subsection{Supersolution for singular terminal condition} \label{ssect:sing_supersol}

To lighten the presentation, {\it in the rest of the paper, $\xi$ is supposed to be non-negative. }Theorem \ref{thm:rand_time_exist_sol_BSDE} gives sufficient conditions to ensure the existence and uniqueness of the solution $(Y,Z,U,M)$. When the terminal condition is singular, that is if $\xi$ does not belong to any $\bL^p(\Omega)$ for some $p>1$, we adopt the following definition. 
\begin{definition}[Supersolution for singular terminal condition] \label{def:sol_sing_BSDE_rtt}
We say that a triple of processes $(Y,Z,U,M)$ is a supersolution to the BSDE \eqref{eq:bsde} with singular terminal condition $Y_\S = \xi \geq 0$ if it satisfies:
\begin{enumerate}
\item There exists some $\ell > 1$ and an increasing sequence of stopping times $\S_n$ converging to $\S$ such that for all $n>0$ and all $t \geq 0$
\begin{align*}
& \bE \left[ \sup_{r\in [0,t]} |Y_{r \wedge \S_n}|^\ell + \left( \int_0^{t \wedge \S_n} |Z_r|^2 dr \right)^{\ell/2} \right. \\
&\qquad  \left. + \left( \int_0^{t \wedge \S_n} \int_\cE |U_r(e)|^2 \pi(de,dr) \right)^{\ell/2}+ [M ]^{\ell/2}_{t \wedge \S_n} \right] < +\infty;
\end{align*}
\item $Y$ is non-negative;
\item for all $0\leq t \leq T$ and $n>0$:
\begin{align} \nonumber
Y_{t \wedge \S_n}  & = Y_{T \wedge \S_n} + \int_{t \wedge \S_n}^{T \wedge \S_n}  f(r,Y_r,Z_r,U_r)dr - \int_{t \wedge \S_\eps}^{T \wedge \S_n} Z_r dW_r  \\ \label{e:sdepart} 
& - \int_{t \wedge \S_n}^{T \wedge \S_n} \int_\cE U_r(e) \tpi(de,dr) - \int_{t \wedge \S_n}^{T \wedge \S_n} dM_r.
\end{align}
\item On the set $\{t\ge \S\}$: $Y_t=\xi, Z=U=M=0$ a.s.\ and \eqref{eq:term_cond_super_sol} holds:
\begin{equation*} 
 \liminf_{t \to +\infty} Y_{t\wedge \S} \geq \xi, \quad \mbox{a.s.}
 \end{equation*}
\end{enumerate}
We say that $(Y,Z,U,M)$ is a minimal supersolution to the BSDE \eqref{eq:bsde} if for any other supersolution $(Y',Z',U',M')$ we have $Y_t\le Y'_t$ a.s.\ for any $t>0$.
\end{definition}

\begin{remark}
The non-negativity condition can be replaced in general by: $Y$ is bounded from below by a process $\bar Y$ such that $\bE  \sup_{t \geq 0} |\bar Y_{t\wedge S}|^\ell < +\infty$.
\end{remark}

We next introduce a concept that we think provides a general and natural framework for the study of
BSDE \eqref{eq:bsde} with singular terminal conditions when the terminal time is a stopping time:
\begin{definition}\label{d:solvable_rtt}
A stopping time $\S$ will be called solvable with respect to the BSDE \eqref{eq:bsde} if the filtration $\bF$ is left-continuous at time $\S$ and 
if the BSDE \eqref{eq:bsde} has a supersolution on the time interval $[\![0,\S]\!]$ with terminal condition $Y_\S = \infty$
that is defined as the limit of the solution of the same BSDE with terminal condition equal to the constant $ k$, as $ k$ tends to $\infty.$
\end{definition}
Most of our arguments will be based on solvable stopping times.
From \cite{krus:popi:15}, we know that every deterministic time $\S$ is solvable provided Conditions {\bf (A)}, \ref{B1} and \ref{B2} below hold. 
Exit times of diffusions from smooth domains provide another
example of a solvable stopping time, see Theorem \ref{thm:main_thm_2} below
(a restatement of \cite[Theorem 2]{krus:popi:15} in terms of solvable times).
\cite[Example 1]{krus:popi:15} shows that any stopping time that has a strictly positive density around $0$ is nonsolvable.
Section \ref{s:solvablelemmas} lists some immediate consequences of the definition above that will be useful in the rest of this article.

\subsubsection{Additional conditions on $f$}

For a singular terminal value $\xi$, the conditions \eqref{eq:int_cond_random_time} and \eqref{eq:int_cond_random_time_2} are false. Hence following \cite{krus:popi:15}, we add some hypotheses concerning the generator $f$ and the terminal random time $\S$. 
\begin{enumerate}[label=\textbf{(B\arabic*)}]
\item\label{B1} 
There exists a constant $q > 1$ and a positive and bounded process $\eta$ such that for any $y \geq 0$
\begin{equation*}
f(t,y,z,\psi)\leq -\frac{y}{\eta_t}|y|^{q-1} + f(t,0,z,\psi).
\end{equation*}
\item \label{B2} The processes $f^0$ and $\xi^-$ are bounded. 
\item \label{B3}  There exists $\delta > \delta^*$ such that $\bE \left[ e^{\delta \S} \right] < +\infty.$ The constant $\delta^*$ depends on $\chi$, $L_z$ and $L_\vartheta$. 
\item \label{B4} There exists $m > m^*$ such that for any $j$
$$\bE \int_0^\S |\Upsilon_t(j)|^m dt <+\infty.$$
The value of $m^*$ depends on $\chi$, $L_z$ and $L_\vartheta$ and also on $\delta$ and $\delta^*$.
\end{enumerate}
We further suppose that the generator $(t,y) \mapsto -y|y|^{q-1}/\eta_t$ satisfies 
the {\bf (A)} and {\bf (B)} assumptions, 
which means that $\eta$ satisfies:
\begin{equation} \label{eq:cond_1_over_eta}
\bE \int_0^T \frac{1}{\eta^m_t} dt < +\infty.
\end{equation}
The values of $\delta^*$ and $m^*$ are given in \cite{krus:popi:15}. Let us simply recall that if $y \mapsto f(t,y,z,\psi)$ is non increasing, that is for $\chi=0$, then we have:
$$\delta^* = 2 K^2,\quad m^* = \dfrac{2\delta }{\delta - 2 K^2 }.$$

We consider $(Y^{(k)},Z^{(k)},\psi^{(k)},M^{(k)})$ the unique solution of the BSDE: for any $t < T$
\begin{align}\nonumber 
Y^{(k)}_{t\wedge \S} & = Y^{(k)}_{T\wedge \S} +\int_{t\wedge \S}^{T\wedge \S}   f(r,Y^{(k)}_r,Z^{(k)}_r,U^{(k)}_r) dr \\ \label{eq:truncated_bsde}
& -\int_{t\wedge \S}^{T\wedge \S}  Z^{(k)}_r dW_r -\int_{t\wedge \S}^{T\wedge \S}   \int_\cE U^{(k)}_r(e) \tpi(de,dr) -\int_{t\wedge \S}^{T\wedge \S} dM^{(k)}_r,
\end{align}
with the truncated terminal condition:
\begin{equation} \label{eq:trunc_term_cond}
\bP-\mbox{a.s., on the set } \{t\geq \S\}, \quad Y^{(k)}_t = \xi \wedge k, \ Z^{(k)}_t = U^{(k)}_t =M^{(k)}_t =0.
\end{equation}
From \cite[Proposition 5]{krus:popi:15}, under {\bf (A)}, {\rm \ref{B2}}, {\rm \ref{B3}} and {\rm \ref{B4}}, there exists a unique solution $(Y^{(k)},Z^{(k)},\psi^{(k)},M^{(k)})$ to the BSDE \eqref{eq:truncated_bsde} and \eqref{eq:trunc_term_cond}.
 
By the comparison principle for BSDEs, the sequence $Y^{(k)}$ is non decreasing and converges to some process $\Ymin$. As for deterministic terminal time, the key point is to obtain an a apriori estimate on $Y^{(k)}$, independent of the constant $k$. This a prior estimate ensures that the stopping time $S$ is solvable in the sense of Definition \ref{d:solvable_rtt}.

\subsubsection{Known results for exit times}

To have such estimate, \cite{krus:popi:15} restricts attention to the case where $\S$ is the first hitting time of a diffusion, namely 
\begin{equation}\label{eq:def_stop_time}
\S = \S_D=\inf\{t\ge 0, \quad \Xi_t \notin D\},
\end{equation}
where the forward process $\Xi$ in $\bR^d$ is the strong solution to the stochastic differential equation
\begin{equation}\label{eq:forwardSDE}
d\Xi_t=b(\Xi_t)dt+\sigma(\Xi_t)dW_t
\end{equation}
with some initial value $\Xi_0 \in \bR^d$. The functions $b:\bR^d\to \bR^d$ and $\sigma:\bR^d \to \bR^{d \times d}$ satisfy a global Lipschitz condition: there exists some $C>0$ such that
\begin{equation}\label{eq:lipschitz_cond_coeff_sde}
\forall x,y\in \bR^d \quad \|\sigma(x)-\sigma(y)\|+\|b(x)-b(y)\| \le C\|x-y\|.
\end{equation}
The domain $D$ is an open bounded subset of $\bR^d$, whose boundary is at least of class $C^2$ (see for example \cite{gilb:trud:01}, Section 6.2, for the definition of a regular boundary). From now on, $\Xi_0$ is fixed and supposed to be in $D$. 

Note that the condition \ref{B3} imposes some implicit hypotheses between the generator $f$, the set $D$ and the coefficients of the SDE \eqref{eq:forwardSDE}. The \cite[Lemma 2]{krus:popi:15} details some sufficient conditions on the coefficients $b$ and $\sigma$. 

We introduce the signed distance function $\dist:\bR^d\to \bR$ of $D$, which is defined by $\dist(x)=\inf_{y \notin D}\|x-y\|$ if $x\in D$ and $\dist(x)=-\inf_{y\in D}\|x-y\|$ if $x\notin D$. \cite[Proposition 6]{krus:popi:15} is a Keller-Osserman type inequality (see \cite{kell:57,osse:57}): there exists a constant $C$ such that:
\begin{equation}\label{eq:upp_bound_rand_time}
0 \leq Y_{t\wedge \S}^{(k)} \le \Ymin_{t\wedge \S} \leq \frac C{ \dist(\Xi_{t\wedge \S})^{2(p-1)}}.
\end{equation}
Constant $p>1$ is the H\"older conjugate of $q$.

Next we define the notion of supersolution. To this end, we set for $n\geq 1$
\begin{equation}\label{eq:def_tau_eps}
\S_n =\inf \left\{t \geq 0, \dist(\Xi_t) \leq \dfrac{1}{n} \right\},
\end{equation}
where $\dist(\Xi_t)$ denotes the distance between the position of $\Xi$ at time $t$ and the boundary of $D$.
The main result \cite[Theorem 2]{krus:popi:15} is:

\begin{theorem}\label{thm:main_thm_2}
If $\S$ is the exit time given by \eqref{eq:def_stop_time}, and if $\bF$ is left-continuous at time $S$, under Assumptions \emph{\textbf{(A)}} and \emph{\textbf{(B)}}, $\S$ is a solvable stopping time (Definition \ref{d:solvable_rtt}). Moreover there exists a minimal supersolution $(\Ymin,\Zmin,\Pmin,\Mmin)$ to BSDE \eqref{eq:bsde} with singular terminal condition $\Ymin_\S=\xi$ (Definition \ref{def:sol_sing_BSDE_rtt}).
\end{theorem}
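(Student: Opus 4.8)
The theorem makes two assertions: that $\S$ is solvable and that a minimal supersolution exists for the given singular $\xi$. Since left-continuity of $\bF$ at $\S$ is part of the hypotheses, for solvability it remains only to exhibit a supersolution with terminal value $\infty$ realized as the limit of bounded-terminal-condition solutions; this is exactly what the truncation construction below produces when applied to $\xi \equiv \infty$. The plan is therefore to run a single truncation-and-limit argument that simultaneously delivers the minimal supersolution for $\xi$ and the solvability of $\S$.

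First I would fix, for each $k$, the truncated BSDE \eqref{eq:truncated_bsde} with bounded terminal condition $\xi \wedge k$. Under Assumptions \textbf{(A)}, \ref{B2}, \ref{B3} and \ref{B4} the bounded value $\xi \wedge k$ satisfies the integrability requirements \eqref{eq:int_cond_random_time}--\eqref{eq:int_cond_random_time_2}, so Theorem \ref{thm:rand_time_exist_sol_BSDE} (equivalently \cite[Proposition 5]{krus:popi:15}) furnishes a unique solution $(Y^{(k)},Z^{(k)},U^{(k)},M^{(k)})$. The comparison principle, available under \ref{A1}--\ref{A4}, gives $Y^{(k)} \le Y^{(k+1)}$, so the pointwise increasing limit $\Ymin := \lim_k Y^{(k)}$ exists, a priori with values in $[0,+\infty]$.

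The heart of the argument, and the step I expect to be the main obstacle, is the $k$-independent a priori bound \eqref{eq:upp_bound_rand_time} of Keller-Osserman type. To obtain it I would exploit the superlinear dissipativity \ref{B1}, namely $f(t,y,z,\psi) \le -y|y|^{q-1}/\eta_t + f(t,0,z,\psi)$, in order to compare $Y^{(k)}$ with the blow-up profile generated by the pure power driver $-y|y|^{q-1}/\eta_t$. For the exit time \eqref{eq:def_stop_time} this comparison transfers the classical boundary blow-up rate of the associated elliptic problem and yields $0 \le Y^{(k)}_{t\wedge\S} \le C\,\dist(\Xi_{t\wedge\S})^{-2(p-1)}$ uniformly in $k$, with $p$ the H\"older conjugate of $q$ (this is \cite[Proposition 6]{krus:popi:15}). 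The decisive point is that the right-hand side is finite on $\{\dist(\Xi_{t\wedge\S}) > 0\}$, i.e.\ strictly before the terminal time; the superlinearity is precisely what prevents $\Ymin$ from being identically $+\infty$ on $[\![0,\S]\!]$.

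With the uniform bound secured I would pass to the limit on each localized interval determined by $\S_n$ in \eqref{eq:def_tau_eps}, where $\dist(\Xi) \ge 1/n$ keeps all quantities integrable. The a priori estimates of Theorem \ref{thm:rand_time_exist_sol_BSDE}, applied there, give $k$-uniform control of $(Z^{(k)},U^{(k)},M^{(k)})$, hence weak limits $(\Zmin,\Pmin,\Mmin)$ (and a monotone limit for $Y$), and allow passage to the limit in \eqref{eq:truncated_bsde} to recover the supersolution equation \eqref{e:sdepart}; Conditions 1 and 2 of Definition \ref{def:sol_sing_BSDE_rtt} follow from these estimates together with $\Ymin \ge 0$. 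For the terminal condition I would argue that, since $Y^{(k)}_{t\wedge\S} \uparrow \Ymin_{t\wedge\S}$ with $Y^{(k)}_\S = \xi \wedge k$, lower semicontinuity gives $\liminf_{t\to\infty} \Ymin_{t\wedge\S} \ge \sup_k (\xi\wedge k) = \xi$. Minimality is then immediate: any supersolution $Y'$ has terminal value $\ge \xi \ge \xi\wedge k$, so comparison yields $Y^{(k)} \le Y'$ for all $k$ and hence $\Ymin \le Y'$. Finally, rerunning the identical construction with $\xi$ replaced by $\infty$ (so that $\xi \wedge k = k$) produces a supersolution with terminal value $\infty$ that is, by construction, the limit of the constant-$k$ solutions; combined with the assumed left-continuity of $\bF$ at $\S$, this is precisely Definition \ref{d:solvable_rtt}, so $\S$ is solvable.
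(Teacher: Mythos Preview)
Your proposal is correct and follows precisely the approach of \cite[Theorem 2]{krus:popi:15}, which the present paper simply cites without reproducing a proof: truncation $Y^{(k)}$ with terminal value $\xi\wedge k$, monotonicity by comparison, the Keller--Osserman a priori bound \eqref{eq:upp_bound_rand_time} from \cite[Proposition 6]{krus:popi:15}, localization via the stopping times $\S_n$ of \eqref{eq:def_tau_eps}, weak-limit passage on each $[\![0,\S_n]\!]$, and minimality by comparison (cf.\ \cite[Propositions 4 and 7]{krus:popi:15}, as recalled in the proof of Lemma~\ref{lem:solvability_min_sol}). The solvability of $\S$ is, as you note, the special case $\xi\equiv\infty$ of the same construction.
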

Let us emphasize that estimate \eqref{eq:upp_bound_rand_time} implies that a.s. $\Ymin_t \leq C n^{2(p-1)}$ if $t \leq \S_n$. This property is similar to the result in Lemma \ref{lem:sq_stop_times_bounded_Y} in the continuous case. 

\section{Solvable stopping time and minimal supersolution}\label{s:solvablelemmas}

The next lemmas are useful consequences of the notion of solvable stopping times. First, note that the left-continuity assumption of $\bF$ at time $\S$ is true for example if $\S$ is predictable and if $\bF$ is a quasi-left continuous filtration (that is for any predictable stopping time $\tau$, we have $\cF_{\tau-}=\cF_{ \tau}$). This property of the filtration rules out the possibility that any of the involved processes has jumps at predictable, and a fortiori deterministic times. An important example is the filtration generated by the Brownian motion $W$ and the orthogonal Poisson random measure $\pi$ and $\S$ is given by \eqref{eq:def_stop_time}. 

\begin{lemma} \label{lem:solvability_min_sol}
Assume that $\S$ is solvable and suppose that the generator $f$ satisfies Conditions {\bf (A)}. Then the BSDE  \eqref{eq:bsde} has a minimal supersolution on the time interval $[\![0,\S]\!]$ with terminal condition $Y_\S = \infty$. 
\end{lemma}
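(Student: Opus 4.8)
The plan is to treat existence as essentially granted by solvability and to concentrate on \emph{minimality}. By Definition \ref{d:solvable_rtt}, for each constant $k$ the truncated BSDE \eqref{eq:truncated_bsde}--\eqref{eq:trunc_term_cond} (with the truncation $\xi\wedge k$ replaced by the constant $k$) has a solution $Y^{(k)}$; by the comparison principle (available under \ref{A3}, see the Remark following \ref{A4}) the family $(Y^{(k)})_k$ is nondecreasing, and solvability asserts precisely that $\bar Y^\infty:=\lim_{k\to\infty}Y^{(k)}$ is a supersolution of \eqref{eq:bsde} with terminal value $\infty$ (that it satisfies every item of Definition \ref{def:sol_sing_BSDE_rtt} follows by passing the a priori estimates of Theorem \ref{thm:rand_time_exist_sol_BSDE} to the monotone limit). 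Hence it remains to show that $\bar Y^\infty\le Y$ for an arbitrary supersolution $(Y,Z,U,M)$ with terminal value $\infty$; since $\bar Y^\infty=\sup_k Y^{(k)}$, it suffices to prove $Y^{(k)}\le Y$ for each fixed $k$.

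First I would set up a linear comparison between $Y^{(k)}$ and $Y$. Fix $k$ and let $\S_n\uparrow\S$ be the localizing sequence attached to the supersolution $Y$. On $[\![0,\S_n]\!]$ both $Y^{(k)}$ and $Y$ satisfy the same dynamics \eqref{e:sdepart}, so $\delta:=Y^{(k)}-Y$ solves a BSDE whose driver is the increment $f(r,Y^{(k)}_r,Z^{(k)}_r,U^{(k)}_r)-f(r,Y_r,Z_r,U_r)$. I would linearize this increment as $\alpha_r\delta_r+\beta_r\cdot(Z^{(k)}_r-Z_r)+\int_\cE\gamma_r(e)(U^{(k)}_r(e)-U_r(e))\,\mu(de)$, where \ref{A1} gives $\alpha_r\le\chi$, \ref{A4} gives $|\beta_r|\le L_z$, and \ref{A3} gives $\gamma_r(e)\ge-1$ with $|\gamma_r|$ dominated by $\vartheta$. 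Introducing the nonnegative adjoint weight $\Gamma$, the stochastic (Dol\'eans) exponential of $\int_0^{\cdot}\alpha_r\,dr+\int_0^{\cdot}\beta_r\,dW_r+\int_0^{\cdot}\int_\cE\gamma_r(e)\,\tpi(de,dr)$ (well defined and nonnegative because $\gamma\ge-1$ and $\beta,\gamma$ are bounded), the product $\Gamma\delta$ is a local martingale on $[\![0,\S_n]\!]$, which yields the representation $\delta_t=\bE\big[\Gamma_{t,\S_n}\,\delta_{\S_n}\mid\cF_t\big]$. Since $\Gamma\ge0$ and $\delta_{\S_n}\le(Y^{(k)}_{\S_n}-Y_{\S_n})^+$, this gives $\delta_t\le\bE\big[\Gamma_{t,\S_n}(Y^{(k)}_{\S_n}-Y_{\S_n})^+\mid\cF_t\big]$.

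The decisive step is the passage $n\to\infty$. Because $Y^{(k)}$ carries the constant bounded terminal value $k$, it is a bounded process, so $Y^{(k)}_{\S_n}$ stays uniformly bounded by some $C_k$; on the other hand $Y$ is a supersolution, so \eqref{eq:term_cond_super_sol} forces $Y_{\S_n}\to+\infty$ as $\S_n\uparrow\S$. Consequently $(Y^{(k)}_{\S_n}-Y_{\S_n})^+\to0$ almost surely, while the integrand is dominated by $\Gamma_{t,\S_n}\,C_k$. Using the integrability furnished by Definition \ref{def:sol_sing_BSDE_rtt}(1) and Theorem \ref{thm:rand_time_exist_sol_BSDE}, together with the left-continuity of $\bF$ at $\S$ (part of solvability), which ensures that the martingale component $M$ has no jump at $\S$ and hence that no mass is lost across the terminal time, I would conclude by dominated convergence that $\bE[\Gamma_{t,\S_n}(Y^{(k)}_{\S_n}-Y_{\S_n})^+\mid\cF_t]\to0$. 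Therefore $\delta_t\le0$, i.e.\ $Y^{(k)}_t\le Y_t$ for every $t$; letting $k\to\infty$ gives $\bar Y^\infty\le Y$, which is minimality.

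The main obstacle I expect is exactly this terminal limit: justifying that $\bE[\Gamma_{t,\S_n}(Y^{(k)}_{\S_n}-Y_{\S_n})^+\mid\cF_t]$ vanishes requires controlling the adjoint weight $\Gamma$ up to $\S$ (so that the implicit change of measure and the dominated convergence are legitimate, the delicate point being the factor $\exp(\int\alpha_r\,dr)$ when $\chi>0$) and ruling out a jump of $M$ at the terminal time. This is where the boundedness of the truncated solutions $Y^{(k)}$ and the left-continuity of $\bF$ at $\S$ enter decisively; the linearization and the resulting comparison are otherwise routine given \ref{A1}, \ref{A3} and \ref{A4}.
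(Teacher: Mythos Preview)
Your strategy matches the paper's: the paper simply refers to \cite[Propositions~4 and~7]{krus:popi:15} and observes that the only feature of $S$ actually used is the left-continuity of $\bF$ at $S$. Your reconstruction---existence directly from the definition of solvability, minimality by comparing each truncated solution $Y^{(k)}$ with an arbitrary supersolution via linearization and a Dol\'eans--Dade weight---is precisely the shape of those propositions.

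One step needs correction. You assert that ``because $Y^{(k)}$ carries the constant bounded terminal value $k$, it is a bounded process.'' Under Conditions~\textbf{(A)} alone this does not follow (a bounded terminal value does not force a global bound on $[\![0,S]\!]$, and the lemma explicitly dispenses with \textbf{(B)}). What you actually need, and what does hold, is that $Y^{(k)}_{S_n}\to Y^{(k)}_{S-}=k$ a.s.\ as $n\to\infty$: this uses only the c\`adl\`ag property of $Y^{(k)}$ together with the left-continuity of $\bF$ at $S$, which rules out a jump of the martingale part there. Combined with $Y_{S_n}\to\infty$ this still yields $(Y^{(k)}_{S_n}-Y_{S_n})^+\to 0$ a.s., which is the correct input to the limit. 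However, the domination you propose, $\Gamma_{t,S_n}C_k$, is then unavailable---and in any case it depends on $n$, so it would not serve as a dominating function for conditional dominated convergence. You are right that this terminal passage is the genuine technical point; its resolution in \cite{krus:popi:15} relies on the $L^\ell$-integrability built into Definition~\ref{def:sol_sing_BSDE_rtt}(1) for the arbitrary supersolution and the a~priori $L^r$-estimates of Theorem~\ref{thm:rand_time_exist_sol_BSDE} for $Y^{(k)}$, rather than on a uniform pointwise bound.
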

\begin{proof}
The arguments can be found in \cite[Propositions 4 and 7]{krus:popi:15}. The adaptation is straightforward in our setting since the arguments are not based on a particular form of the stopping time $\S$. Only left-continuity of the filtration is important. 
\end{proof}
Let us emphasize that Assumptions {\bf (B)} are not necessary here, since solvability implies existence of a supersolution. 
In the rest of the paper we denote by $(Y^{\infty},Z^{\infty},U^{\infty},M^{\infty})$ the minimal weak supersolution with terminal condition $+\infty$ a.s. at time $\S$. Sometimes, if we want to stress the dependence w.r.t. $\S$, we denote it $(Y^{\S,\infty},Z^{\S,\infty},U^{\S,\infty},M^{\S,\infty})$.

\begin{lemma}
Assume that $\S$ is solvable and suppose that generator $f$ satisfies Conditions {\bf (A)}, {\rm \ref{B2}}, {\rm \ref{B3}} and {\rm \ref{B4}}. Then the BSDE  \eqref{eq:bsde} with a singular Markovian terminal value $\xi$ at time $\S$, has a minimal supersolution $(\Ymin,\Zmin,\Umin,\Mmin)$ on the time interval $[\![0,\S]\!]$ with terminal condition $\Ymin_\S = \xi$. 
\end{lemma}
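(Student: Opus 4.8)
The plan is to construct $\Ymin$ as the monotone limit of truncated solutions, exactly as in the construction recalled just before \eqref{eq:truncated_bsde}, and to let solvability supply the $k$-uniform upper bound that, in the exit-time setting, was provided by the Keller--Osserman estimate \eqref{eq:upp_bound_rand_time}. First I would, for each $k \in \bN$, look at the BSDE \eqref{eq:truncated_bsde} with bounded terminal condition $\xi \wedge k$. Since $\xi \ge 0$ forces $\xi^- = 0$, Conditions {\bf (A)}, \ref{B2}, \ref{B3}, \ref{B4} are exactly the hypotheses of \cite[Proposition 5]{krus:popi:15}, which gives a unique solution $(Y^{(k)},Z^{(k)},U^{(k)},M^{(k)})$ obeying the a priori estimates of Theorem \ref{thm:rand_time_exist_sol_BSDE}. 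The comparison principle (valid under \ref{A3}; see the Remark after \ref{A4}) then makes $k \mapsto Y^{(k)}$ non-decreasing, so $\Ymin := \lim_{k\to\infty} Y^{(k)}$ exists pointwise in $[0,+\infty]$. Note that \ref{B1} is deliberately absent: the superlinearity it encodes is precisely what is abstracted away by the solvability hypothesis.

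The decisive step is the uniform bound. Since $\S$ is solvable, Lemma \ref{lem:solvability_min_sol} provides the minimal supersolution $(Y^{\infty},Z^{\infty},U^{\infty},M^{\infty})$ with terminal value $+\infty$ at $\S$. For every $k$ one has $\xi \wedge k \le +\infty$, so comparison of the truncated solution against this supersolution yields $Y^{(k)}_t \le Y^{\infty}_t$ for all $t$ and all $k$; letting $k\to\infty$ gives $\Ymin_t \le Y^{\infty}_t < +\infty$ on $[\![0,\S]\!]$. Equipped with this bound and the $k$-uniform estimates of Theorem \ref{thm:rand_time_exist_sol_BSDE} localised along a sequence of stopping times $\S_n$ increasing to $\S$, I would pass to the limit in \eqref{eq:truncated_bsde} on each interval $[0,T\wedge \S_n]$: $Z^{(k)}$ and $U^{(k)}$ converge in the relevant $\bL^2$ spaces and $M^{(k)}$ converges to a local martingale $\Mmin$ orthogonal to $W$ and $\tpi$, so that $(\Ymin,\Zmin,\Umin,\Mmin)$ satisfies the dynamics \eqref{e:sdepart} and the integrability of item (1) in Definition \ref{def:sol_sing_BSDE_rtt}. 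This is exactly the limiting procedure of \cite[Propositions 4 and 7]{krus:popi:15}; because it never uses the particular (exit-time) form of $\S$ but only left-continuity of $\bF$ at $\S$ --- which is built into solvability --- the adaptation is immediate.

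Finally I would verify the terminal condition and minimality. For each fixed $k$ the bounded-terminal solution satisfies $\lim_{t\to\infty} Y^{(k)}_{t\wedge \S} = \xi \wedge k$ almost surely; since $\Ymin \ge Y^{(k)}$ this gives $\liminf_{t\to\infty} \Ymin_{t\wedge \S} \ge \xi \wedge k$ for every $k$, and letting $k\to\infty$ produces \eqref{eq:term_cond_super_sol}. Hence $(\Ymin,\Zmin,\Umin,\Mmin)$ is a supersolution. For minimality, given any supersolution $(Y',Z',U',M')$ with terminal value $\xi$, its defining property $\liminf_{t} Y'_{t\wedge \S} \ge \xi \ge \xi \wedge k$ lets me compare it against the truncated BSDE to get $Y^{(k)} \le Y'$, and passing to the limit gives $\Ymin \le Y'$. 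I expect the main obstacle to be the limit passage of the second paragraph --- extracting convergence of $(Z^{(k)},U^{(k)},M^{(k)})$ and identifying the limit as a bona fide supersolution rather than merely a process with the right $\liminf$ --- but since the estimates underlying \cite[Propositions 4 and 7]{krus:popi:15} are insensitive to the shape of $\S$, this reduces to a direct transcription of those arguments.
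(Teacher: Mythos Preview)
Your proposal is correct and follows essentially the same route as the paper: build $Y^{(k)}$ from \cite[Proposition~5]{krus:popi:15}, use solvability of $\S$ to obtain the $k$-uniform ceiling $Y^\infty$ that replaces the Keller--Osserman bound \eqref{eq:upp_bound_rand_time}, and then transplant the limiting arguments of \cite{krus:popi:15}. The only cosmetic difference is that the paper makes the comparison step explicit via the intermediate process $Y^{(k),\infty}$ (the solution with constant terminal value $k$), writing $Y^{(k)}_t \le Y^{(k),\infty}_t \le Y^\infty_t$; the first inequality is a comparison between two \emph{classical} solutions, and the second is the very definition of solvability, so no ``solution vs.\ supersolution'' comparison is invoked.
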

\begin{proof}
Let us denote by $ Y^{(k),\infty}$ the first component of the solution of the BSDE \eqref{eq:bsde} with terminal condition $k$. Since $\S$ is solvable, and with {\bf (A)}, $Y^{(k),\infty}$ is an increasing sequence converging to $Y^\infty$. 

Again from \cite[Proposition 5]{krus:popi:15}, under {\bf (A)}, {\rm \ref{B2}}, {\rm \ref{B3}} and {\rm \ref{B4}}, there exists a unique solution $(Y^{(k)},Z^{(k)},\psi^{(k)},M^{(k)})$ to the BSDE \eqref{eq:truncated_bsde} and \eqref{eq:trunc_term_cond}. By comparison principle, a.s for any $t \geq 0$
$$Y^{(k)}_t \leq Y^{(k),\infty}_t \leq Y^\infty_t.$$
Hence we obtain an upper estimate on $Y^{(k)}$, independent of $k$, which replaces the upper bound \eqref{eq:upp_bound_rand_time}. Arguing now as in \cite{krus:popi:15}, we obtain the existence of $(\Ymin,\Zmin,\Umin,\Mmin)$. 
\end{proof}

Note that the main result of Theorem \ref{thm:main_thm_2} is the solvability of the first exit time $\S$. The existence of $(\Ymin,\Zmin,\Pmin,\Mmin)$ comes from the preceding lemma.

Before we move further, let us note the following:
\begin{lemma} \label{lem:sq_stop_times_bounded_Y}
Suppose a stopping time $S$ is solvable. Suppose $(Y,Z,U,M)$ is a supersolution of \eqref{eq:bsde} with terminal
condition $\xi$ constructed as the limit of solutions with terminal condition $\xi \wedge k.$ Then
the sequence $S_n$ in Definition \ref{def:sol_sing_BSDE_rtt} can be chosen so that
\begin{equation}\label{e:boundedatbetan}
Y_t \le n \text{ for }t < S_n.
\end{equation}
\end{lemma}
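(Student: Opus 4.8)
The plan is to dominate $Y$ by the minimal supersolution $Y^{\infty}$ with terminal value $+\infty$, which exists because $S$ is solvable (Lemma \ref{lem:solvability_min_sol}), and then to read the localizing sequence off the level sets of $Y^{\infty}$. First I would establish the domination $Y\le Y^{\infty}$. By hypothesis $Y=\lim_k Y^{(k)}$, where $Y^{(k)}$ is the solution with terminal condition $\xi\wedge k$, while solvability gives $Y^{\infty}=\lim_k Y^{(k),\infty}$, where $Y^{(k),\infty}$ is the solution with the constant terminal condition $k$. Since $\xi\wedge k\le k$, the comparison principle (valid under {\bf (A)}, in particular \ref{A3}) yields $Y^{(k)}_t\le Y^{(k),\infty}_t\le Y^{\infty}_t$ for every $t$ and every $k$; letting $k\to\infty$ gives $Y_t\le Y^{\infty}_t$ almost surely.

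Next I would define the candidate times through the level sets of $Y^{\infty}$. Set
\[
T_n=\inf\{t\ge 0:\ Y^{\infty}_t\ge n\},
\]
which is a stopping time since $Y^{\infty}$ is \cad and adapted and $\bF$ is complete and right-continuous. By the definition of $T_n$ as a début, $Y^{\infty}_t<n$ for every $t<T_n$, hence $Y_t\le Y^{\infty}_t<n$ for $t<T_n$. To retain the integrability demanded in Definition \ref{def:sol_sing_BSDE_rtt}, I would intersect with the localizing sequence $\tilde S_n\uparrow S$ that already comes with the supersolution $(Y,Z,U,M)$ and put $S_n=T_n\wedge\tilde S_n$. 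Then $t\wedge S_n\le t\wedge\tilde S_n$, so all the suprema and integrals in condition~1 of Definition \ref{def:sol_sing_BSDE_rtt} over $[\![0,S_n]\!]$ are dominated by those over $[\![0,\tilde S_n]\!]$ and remain finite, while $Y_t\le n$ holds for $t<S_n\le T_n$.

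The point that remains, and the step I expect to be the real obstacle, is the convergence $S_n\uparrow S$, which reduces to $T_n\uparrow S$. For this I would use that $Y^{\infty}$, being a supersolution, is locally bounded on $[\![0,S[\![$: condition~1 of Definition \ref{def:sol_sing_BSDE_rtt} applied to $Y^{\infty}$ furnishes a sequence $S^{\infty}_m\uparrow S$ with $\bE\big[\sup_{r\in[0,t]}|Y^{\infty}_{r\wedge S^{\infty}_m}|^{\ell}\big]<+\infty$ for all $t$, whence on $\{t<S^{\infty}_m\}$ the quantity $\sup_{s\le t}Y^{\infty}_s$ is finite a.s.; running $m$ through a countable family and using $S^{\infty}_m\uparrow S$, one concludes that $\sup_{s\le t}Y^{\infty}_s<+\infty$ a.s.\ for every $t<S$. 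Fixing $\omega$ and $t<S(\omega)$, any $n>\sup_{s\le t}Y^{\infty}_s(\omega)$ then forces $Y^{\infty}_s(\omega)<n$ for all $s\le t$ and hence $T_n(\omega)\ge t$; letting $n\to\infty$ gives $\lim_n T_n\ge t$, and as $t<S$ was arbitrary, $\lim_n T_n\ge S$. Since $T_n\le S$ always (on $\{S<\infty\}$ one has $Y^{\infty}_S=+\infty\ge n$, so $S$ lies in the hitting set), this yields $T_n\uparrow S$, and therefore $S_n=T_n\wedge\tilde S_n\uparrow S$.

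The delicate ingredient is precisely the passage from the $\bE$-integrability in condition~1 to pathwise local boundedness of $Y^{\infty}$ up to any level strictly below $S$; this is what prevents $T_n$ from stalling below $S$. Everything else—the domination $Y\le Y^{\infty}$, the bound $Y_t<n$ on $\{t<T_n\}$, and the inheritance of the integrability estimates after intersecting with $\tilde S_n$—is then immediate. I would record at the end that the left-continuity of $\bF$ at $S$, built into the notion of solvability, is what guarantees $Y^{\infty}$ is a genuine \cad supersolution without a jump at $S$, so that the above début construction is well behaved.
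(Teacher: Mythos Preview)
Your proof is correct and follows essentially the same approach as the paper: both dominate $Y$ by $Y^{\infty}$ via comparison, define the localizing sequence through the hitting times $T_n=\inf\{t:Y^{\infty}_t\ge n\}$, intersect with the original localizing sequence to retain integrability, and argue $T_n\uparrow S$ from the local boundedness of $Y^{\infty}$ on $[\![0,S[\![$. The only cosmetic difference is that the paper invokes the left-continuity of $\bF$ at $S$ explicitly to rule out $T_N=S$ for finite $N$ (i.e., a jump of $Y^{\infty}$ at $S$), whereas your argument establishes $T_n\uparrow S$ directly from pathwise local boundedness and only mentions left-continuity as a closing remark; either route suffices for the stated bound $Y_t\le n$ on $\{t<S_n\}$.
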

\begin{proof}
Let $Y^{S,\infty}$ denote the first component of the supersolution for terminal condition $\infty$ and
let $S_n^{1,\infty}$ be the sequence of $S_n$ in Definition \ref{def:sol_sing_BSDE_rtt} for the same terminal condition.
It follows from \eqref{e:sdepart} and \eqref{eq:term_cond_super_sol} that $Y^{S,\infty}$ has c\`adl\`ag sample paths
on $[\![0,S]\!]$ and $\lim_{t\rightarrow \infty}Y^{S,\infty}_{t \wedge S} = \infty.$
This implies that the hitting times
\begin{equation}\label{d:betan2}
S_n^{2,\infty} \doteq \inf\{t: Y^{S,\infty}_{t \wedge S} \ge n \}
\end{equation}
satisfy: $S_n^{2,\infty} \leq S$ and it is a non-decreasing sequence. From the first property of a supersolution, this sequence converges almost surely to $S$. Now suppose that $S_N^{2,\infty} = S$ for some $N$ (and thus for any $n\geq N$). It would mean that $Y^{S,\infty}$ has a jump at time $S$. In other words, the martingale parts have a jump at time $S$. But it is excluded in Definition \ref{d:solvable_rtt}. Thus 
\begin{equation}\label{e:limitofbetan2}
S_n^{2,\infty} \nearrow S \text{ as } n \nearrow \infty. 
\end{equation}
Then if we replace the stopping times $S^{1,\infty}_n$
in Definition \ref{def:sol_sing_BSDE_rtt} with 
$S^{3,\infty}_n\doteq S^{1,\infty}_n \wedge S^{2,\infty}_n$ 
all of the conditions of the definition remain valid; furthermore 
\begin{equation}\label{e:boundonYinf}
Y_t^{S,\infty} \le n \text{ for }t < S^{3,\infty}_n,
\end{equation}
holds. This proves the lemma for the terminal condition $\infty.$
Let $Y^{S,k}$ denote the solution of \eqref{eq:bsde} with terminal condition $Y_S = k.$
Then by definition $Y^{S,k}_{t\wedge S} \nearrow Y^{S,\infty}_{t\wedge S}$. This and \eqref{e:boundonYinf} imply
\begin{equation}\label{e:orderLinf}
Y_t^{S,k} \le
Y_t^{S,\infty} \le n \text{ for }t < S^{3,\infty}_n.
\end{equation}
Let $Y^{S,\xi}$ be the minimal supersolution of \eqref{eq:bsde} with terminal condition $Y_S = \xi$
and let $Y^{S,\xi\wedge k}$ be the solution of \eqref{eq:bsde} with terminal condition $Y_S = \xi \wedge k.$
By the assumption of the lemma
\begin{equation}\label{e:limitinL}
Y^{S,\xi \wedge k }_{t\wedge S} \nearrow Y^{S,\xi}_{t\wedge S}
\end{equation}
as $k \nearrow \infty$. 
By comparison principle for the solution of BSDE we have 
$Y^{S,\xi \wedge k}_{t\wedge S} \le  Y^{S,k}_{t\wedge S}$. This, \eqref{e:orderLinf},
\eqref{e:limitinL}, the definition \eqref{d:betan2} of $S^{2,\infty}_n$ and letting $k \nearrow \infty$ imply
\begin{equation}\label{e:upperboundxibeta2}
Y_t^{S,\xi} \le
Y_t^{S,\infty} \le n \text{ for }t < S^{3,\infty}_n.
\end{equation}
Let $S_n^{1,\xi}$ be the sequence of stopping time appearing in the definition of the supersolution
$Y^{S,\xi}.$ Define $S_n^{2,\xi} \doteq S_n^{1,\xi} \wedge S_n^{3,\infty}.$ From \eqref{e:boundedatbetan}
and from the assumption that $\beta_n^{1,\xi} \nearrow \beta$  we infer $S_n^{2,\xi} \nearrow S.$
This implies that if we replace the replace $S_n^{1,\xi}$ with $S_n^{2,\xi}$ all of the conditions
appearing in the definition of the supersolution $Y^{S,\xi}$ continue to hold; by \eqref{e:upperboundxibeta2}
this sequence of stopping times also satisfy
\begin{equation}\label{e:orderLinf2}
Y_t^{S,\xi} \le
Y_t^{S,\infty} \le n \text{ for }t < S^{2,\xi}_n.
\end{equation}
This proves the lemma for the terminal condition $\xi.$
\end{proof}

If we work with the filtration $\bF^W$ generated by the Brownian motion $W$, then BSDE \eqref{eq:bsde} reduces to the following:
\begin{equation}\label{eq:bsdebrownian}
dY_t  = - f(t,Y_t,Z_t) dt + Z_t dW_t.
\end{equation}
\begin{corollary} \label{coro:seq_rtt_bounded_Y}
In the Brownian filtration $\bF^W$, if $S$ is solvable, then \eqref{e:boundedatbetan} becomes: 
\begin{equation}\label{e:boundedatbetan2}
Y_t \le n \text{ for }t \leq S_n.
\end{equation}
\end{corollary}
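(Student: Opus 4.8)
The plan is to re-run the proof of Lemma \ref{lem:sq_stop_times_bounded_Y} almost verbatim and to observe that, in the Brownian filtration, the single place where a strict time-inequality $t < S_n$ was unavoidable can now be upgraded to $t \le S_n$. Recall that in Lemma \ref{lem:sq_stop_times_bounded_Y} the bound \eqref{e:boundedatbetan} came from the hitting times $S_n^{2,\infty}$ of \eqref{d:betan2}: for $t < S_n^{2,\infty}$ one has $Y^{S,\infty}_{t\wedge S} < n$, but at the instant $S_n^{2,\infty}$ itself the \cad process $Y^{S,\infty}$ could jump strictly above the level $n$, which is exactly why the endpoint had to be excluded. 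The key observation is that in $\bF^W$ no such jump can occur.

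First I would note that working in $\bF^W$ the BSDE is \eqref{eq:bsdebrownian}: by the martingale representation theorem every $\bF^W$-local martingale is a stochastic integral against $W$, so the orthogonal part $M$ vanishes and there is no jump term $U$. Consequently each solution process appearing in the proof of Lemma \ref{lem:sq_stop_times_bounded_Y} --- in particular $Y^{S,\infty}$, the truncations $Y^{S,k}$, and $Y^{S,\xi}$ --- is of the form drift plus a continuous stochastic integral on $[\![0,S]\!]$, and therefore has continuous sample paths there. Since $\cF_0$ is trivial, $Y^{S,\infty}_0$ is a finite constant, so for every $n > Y^{S,\infty}_0$ the hitting time $S_n^{2,\infty}$ is strictly positive; moreover by \eqref{e:limitofbetan2} and the argument following \eqref{d:betan2} we have $S_n^{2,\infty} < S$, so the hitting occurs in the interior where $Y^{S,\infty}$ is continuous. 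Continuity then forces $Y^{S,\infty}_{S_n^{2,\infty}\wedge S} = n$, and hence $Y^{S,\infty}_{t\wedge S} \le n$ for all $t \le S_n^{2,\infty}$, the closed endpoint now included.

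From here the ordering steps of Lemma \ref{lem:sq_stop_times_bounded_Y} carry over unchanged: the comparison inequalities $Y^{S,k}_{t\wedge S} \le Y^{S,\infty}_{t\wedge S}$ and $Y^{S,\xi\wedge k}_{t\wedge S} \le Y^{S,k}_{t\wedge S}$, together with the monotone passages $k \nearrow \infty$ used in \eqref{e:orderLinf}--\eqref{e:orderLinf2}, propagate the closed bound $Y_t \le n$ from $Y^{S,\infty}$ to $Y^{S,\xi}$ on $\{t \le S_n\}$, yielding \eqref{e:boundedatbetan2}. The only delicate point is the hitting-time value, i.e.\ confirming $Y^{S,\infty}_{S_n^{2,\infty}\wedge S} = n$ rather than merely $\ge n$; this is precisely what continuity buys, and it requires $n > Y^{S,\infty}_0$, which is harmless since \eqref{e:boundedatbetan2} is only used in the regime $S_n \nearrow S$ and one may discard the finitely many indices with $n \le Y^{S,\infty}_0$. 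I do not expect any genuine obstacle beyond making this continuity-at-the-hitting-instant argument precise.
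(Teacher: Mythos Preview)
Your proposal is correct and follows exactly the paper's approach: the paper's proof is the single sentence ``Indeed the trajectories of $Y$ are now continuous, not only c\`adl\`ag,'' and you have spelled out precisely why path-continuity upgrades the strict inequality at the hitting instant $S_n^{2,\infty}$ to a non-strict one. Your caveat about discarding indices $n \le Y^{S,\infty}_0$ is a fair technical point that the paper leaves implicit.
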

\begin{proof}
Indeed the trajectories of $Y$ are now continuous, not only c\`adl\`ag. 
\end{proof}

\section{On the existence of a limit} \label{sect:existence_limit}

In Definition \ref{def:sol_sing_BSDE_rtt}, we only supposed that \eqref{eq:term_cond_super_sol} holds: a.s. 
$$ \liminf_{t \to +\infty} \Ymin_{t\wedge \S} \geq \xi.$$
If $\xi=+\infty$ a.s. then we immediately obtain that 
$$ \liminf_{t \to +\infty} Y^\infty_{t\wedge \S} = \lim_{t \to +\infty} Y^\infty_{t\wedge \S} = +\infty.$$
In this section, we focus on the existence of the limit, that is, does it hold that a.s. 
$$ \liminf_{t \to +\infty} \Ymin_{t\wedge \S}  =  \lim_{t \to +\infty} \Ymin_{t\wedge \S} \ ?$$
This question was studied in \cite{popi:16} for a deterministic final time $T$ and the result remains true in our setting. 

We suppose that $\S$ is a solvable stopping time and that Conditions {\bf(A)} and {\bf (B)} hold. Hence for any $\xi$, we can consider the minimal supersolution $(\Ymin,\Zmin,\Umin,\Mmin)$ of BSDE \eqref{eq:bsde} with terminal condition $\xi$ at time $\S$, which is obtained as the increasing limit of the solution with terminal condition $\xi \wedge k$. 

Roughly speaking, the limit of $\Ymin_{\cdot\wedge \S}$ exists provided we know the precise behavior of the generator $f$ w.r.t. $y$. The details can be found in \cite{popi:16} and are left to the reader. We break the generator $f$ into four parts:
\begin{align} \nonumber
f(s,y,z,\psi) & = \left[f(s,y,z,\psi)-f(s,0,z,\psi)\right] +  \left[f(s,0,z,\psi)-f(s,0,0,\psi)\right] \\ \nonumber
& +  \left[f(s,0,0,\psi)-f(s,0,0,{\bm 0})\right] +  f^0_s \\ \label{eq:decomp_gene}
& = \phi(s,y,z,u) + \varpi(s,z,\psi) + \varrho(s,\psi) + f^0_s.
\end{align}
Moreover we suppose that 
\begin{enumerate}[label=\textbf{(C\arabic*)}]
\item\label{C1} The generator $f$ satisfies
\begin{equation*}
b_t g(y) \leq f(t,y,z,\psi)-f(t,0,z,\psi) , \quad \forall y \geq 0, \ \forall (t,z,\psi),
\end{equation*}
where 
\begin{itemize}
\item $b$ is positive and $\displaystyle \bE \int_0^{\S} b_s ds < +\infty$;
\item $g$ is a negative, decreasing and of class $C^1$ function and concave on $\bR_+$ with $g(0)<0$ and $g'(0)<0$.  
\end{itemize}
\item\label{C2}  Moreover one of the next three cases holds:
\begin{itemize}
\item {\bf Case 1.} $f$ does not depend on $\psi$ or $\varrho(t,\psi) \geq 0$;
\item {\bf Case 2.} The value $\vartheta$ of \ref{A3} belongs to $\bL^1_\mu(\cE)$ and there exists a constant $\kappa_*>-1$ such that $\kappa^{0,0,\psi,0}_s(e) \geq \kappa_*$ a.e. for any $(s,\psi,e)$;
\item {\bf Case 3.} $\mu$ is a finite measure on $\cE$.
\end{itemize}

\end{enumerate}
Since Conditions \textbf{(B)} should hold, in particular \ref{B1}, we deduce that $b_t g(y) \leq -\dfrac{1}{\eta_t} y|y|^{q-1}$ for any $t \geq 0$ and $y$. Thus w.l.o.g. $g(y) \leq - y|y|^q$ and $ b_t \geq (-1/g(1))\dfrac{1}{\eta_t} = C \dfrac{1}{\eta_t}$ for some positive constant $C$. We can always add to $g$ a linear function like $- y-1$ such that $g(0)< 0$ and $g'(0) < 0$. Let us define the function $\Theta$ on $(0,+\infty)$ by
\begin{equation}
\Theta (x) = \int_{x}^{+\infty} \frac{-1}{g(y)} dy .
\end{equation}
Recall that $g$ is continuous and negative on $\bR_+$. Thus from the condition $g(y) \leq -y|y|^q$, the function $\Theta : [0,+\infty) \to (0,\Theta(0)]$ is well defined, decreasing, of class $C^{1}$, and bijective. Let $\Theta^{-1} : (0,\Theta(0)] \to [0,+\infty)$ be the inverse of $\Theta$. 

The next theorem shows that process $\Ymin$ is c\`adl\`ag on $\bR_+$ when filtration $\bF$ is complete and right-continuous. No additional assumption (left-continuity) on the filtration is needed here. 
\begin{theorem} \label{thm:exists_limit}
Assumptions {\rm \textbf{(A)}}, {\rm \textbf{(B)}} and {\rm \textbf{(C)}} hold. Then the minimal supersolution $\Ymin$ is equal to: a.s. for any $t\geq 0$
$$\Ymin_{t\wedge\S} = \Theta^{-1} \left( \bE \left[ \Theta(\xi) - \Phi^+_{t\wedge\S} + \Phi^-_{t\wedge\S}  \bigg| \cF_t \right] \right).$$
The processes $\Phi^+$ and $\Phi^-$ are two non-negative c\`adl\`ag  supermartingales with a.s. $\displaystyle \lim_{t\to +\infty} \Phi^-_{t\wedge\S} = 0$. 
\end{theorem}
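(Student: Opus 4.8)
The plan is to linearise the singular nonlinearity through the substitution $V^{(k)}:=\Theta(Y^{(k)})$, where $Y^{(k)}$ is the solution of \eqref{eq:truncated_bsde}--\eqref{eq:trunc_term_cond} with the bounded terminal value $\xi\wedge k$, and to obtain the representation first at this truncated level before letting $k\to\infty$. The transform $\Theta$ is tailored to the problem: from $\Theta'=1/g<0$ and $\Theta''=-g'/g^2>0$ it is decreasing, convex and of class $C^2$ on $[0,\infty)$, and it converts the dominant nonlinearity into the \emph{integrable} process $b$ because $\Theta'(y)g(y)\equiv1$. Since $\xi\wedge k$ is bounded, $Y^{(k)}$ is a genuine solution and all the a~priori moment bounds of Theorem~\ref{thm:rand_time_exist_sol_BSDE} are available; I would first record that $0\le Y^{(k)}\le Y^{\infty}$, so that $0<V^{(k)}\le\Theta(0)$ and $\Theta'(Y^{(k)})$ are bounded. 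Applying the Itô formula for càdlàg semimartingales to $\Theta(Y^{(k)})$ then exhibits it as the solution on $[\![0,\S]\!]$ of a BSDE with terminal value $\Theta(\xi\wedge k)$ and a new driver $\tilde f^{(k)}$.

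The heart of the argument is the sign analysis of $\tilde f^{(k)}$. Using the decomposition \eqref{eq:decomp_gene}, the transformed driver reads
\[
\tilde f^{(k)}_r = \Theta'(Y^{(k)}_r)\phi_r - \tfrac12\Theta''(Y^{(k)}_r)|Z^{(k)}_r|^2 - \Gamma^{(k)}_r + \Theta'(Y^{(k)}_r)\big(\varpi_r + \varrho_r + f^0_r\big),
\]
where $\Gamma^{(k)}\ge0$ is the compensator of the $\tpi$- and $M$-jumps produced by the convexity of $\Theta$. The decisive estimate is Assumption~\ref{C1}: since $\phi_r\ge b_r g(Y^{(k)}_r)$ and $\Theta'<0$, multiplication flips the inequality to $\Theta'(Y^{(k)}_r)\phi_r\le b_r$, so the principal part of $\tilde f^{(k)}$ is dominated by $b_r$. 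The $\varpi$-term is absorbed into the diffusion penalty by completing the square (using \ref{A4} together with $\Theta''>0$), which leaves only a bounded remainder because $|\Theta'|^2/\Theta''=-1/g'\le 1/|g'(0)|$; the jump remainder $\Theta'(Y^{(k)}_r)\varrho_r$ is made non-positive by the three alternatives of \ref{C2}, which are designed exactly for this; and $\Theta'(Y^{(k)})f^0$ is bounded by \ref{B2}. Collecting terms rewrites $\tilde f^{(k)}_r = P^{(k)}_r - N^{(k)}_r$ with a non-negative penalty $N^{(k)}_r$ (gathering the $\alpha^{(k)}_r:=b_r-\Theta'(Y^{(k)}_r)\phi_r\ge0$, the diffusion and jump penalties, and the signed parts of the remainder) and a non-negative source $P^{(k)}_r$ satisfying a $k$-uniform bound $P^{(k)}_r\le b_r + C$, hence integrable over $[\![0,\S]\!]$ thanks to \ref{C1} and $\bE[\S]<\infty$ from \ref{B3}.

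Taking $\bE[\,\cdot\mid\cF_t]$ in the transformed BSDE — legitimate because the stochastic integrals are true martingales, their brackets being integrable by the boundedness of $\Theta'(Y^{(k)})$ and the $Z,U,M$ estimates of Theorem~\ref{thm:rand_time_exist_sol_BSDE} — yields
\[
\Theta(Y^{(k)}_{t\wedge\S}) = \bE\Big[\,\Theta(\xi\wedge k) + \int_{t\wedge\S}^{\S} P^{(k)}_r\,dr - \int_{t\wedge\S}^{\S} N^{(k)}_r\,dr \,\Big|\, \cF_t\Big].
\]
I would then set $\Phi^{-,(k)}_t:=\bE\big[\int_{t\wedge\S}^{\S}P^{(k)}_r\,dr\,\big|\,\cF_t\big]$ and $\Phi^{+,(k)}_t:=\bE\big[\int_{t\wedge\S}^{\S}N^{(k)}_r\,dr\,\big|\,\cF_t\big]$. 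Each is a non-negative càdlàg supermartingale, being a martingale minus an increasing process, and no left-continuity of $\bF$ is needed. Because $P^{(k)}\le b+C$ with $\int_0^{\S}(b_r+C)\,dr<\infty$ and $\S<\infty$ a.s., the integration interval shrinks to a point and dominated convergence for conditional expectations gives $\lim_{t\to\infty}\Phi^{-,(k)}_{t\wedge\S}=0$, uniformly enough in $k$ to survive the passage to the limit.

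Finally I would let $k\to\infty$. By the comparison principle $Y^{(k)}\nearrow\Ymin$, so $\Theta(Y^{(k)})\searrow\Theta(\Ymin)$ and $\Theta(\xi\wedge k)\searrow\Theta(\xi)$; solving the displayed identity for the penalty term shows that $\Phi^{+,(k)}_{t\wedge\S}$ converges, and I would identify the limit $\Phi^{+}$ as a non-negative càdlàg supermartingale, while $\Phi^{-,(k)}\to\Phi^{-}$ retains $\lim_{t\to\infty}\Phi^{-}_{t\wedge\S}=0$. This passage is the main obstacle: the limits in $k$ and in $t$ need not commute, and it is precisely this non-interchange that allows $\Phi^{+}$ to keep a strictly positive terminal value $\Phi^{+}_{\S}$ — the quantitative gap between the minimal supersolution and a genuine solution — even though $\Phi^{+,(k)}_{t\wedge\S}\to0$ at each fixed $k$. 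Controlling this limit, namely the uniform integrability of the martingale parts and the finiteness of $\Phi^{+}_t$ for $t<\S$, is exactly where solvability of $\S$ and the integrability hypotheses \ref{B3}, \ref{B4} and \ref{C1} are used decisively. Once the representation is established, continuity of $\Theta^{-1}$ together with the supermartingale convergence theorem immediately give the existence of $\lim_{t\to\infty}\Ymin_{t\wedge\S}$.
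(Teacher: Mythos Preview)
Your proposal is essentially correct and follows the same route as the paper: apply It\^o's formula to $\Theta(Y^{(k)})$, use the decomposition \eqref{eq:decomp_gene} of $f$, exploit $\Theta'(y)g(y)\equiv 1$ together with \ref{C1} to bound the principal part by $b$, complete the square with the $\varpi$-term to get the remainder $L_z^2/(2g'(0))$, absorb the jump pieces via convexity of $\Theta$ and \ref{C2}, and then pass to the limit in $k$.

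The one technical point where you diverge from the paper is that you take $\bE[\,\cdot\mid\cF_t]$ directly on $[\![t\wedge\S,\S]\!]$, asserting that the stochastic integrals are true martingales up to $\S$. The paper instead applies It\^o on $[\![t\wedge\S,T\wedge\S]\!]$ for deterministic $T$, takes the conditional expectation there (where the martingale property is immediate from Theorem~\ref{thm:rand_time_exist_sol_BSDE}), and only then lets $T\to\infty$ term by term, using monotone convergence for the non-negative pieces and the uniform $L^1$ bound $b_s - L_z^2/(2g'(0))$ for the negative part. Your shortcut can be justified (boundedness of $\Theta'$ on $[0,\infty)$ together with the $r>1$ estimates of Theorem~\ref{thm:rand_time_exist_sol_BSDE} does give enough integrability via BDG), but you should state this explicitly rather than leave it implicit. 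Also, the paper does not take separate limits of $\Phi^{+,(k)}$ and $\Phi^{-,(k)}$; it passes to the limit in the whole $\Phi^{(k)}$ (which converges because the other two terms in the identity do) and only afterwards decomposes the limit $\Phi$ into its positive and negative parts, carrying the uniform bound on $\Phi^-$ through the limit. This avoids the delicate question of whether $\Phi^{+,(k)}$ itself converges, which you flag but do not resolve.
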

Now $\Phi^+$ being a non-negative c\`adl\`ag supermartingale, we can deduce the existence of the following limit:
$$\lim_{t\to +\infty} \Phi_{t \wedge \S}^+ :=\Phi_{ \S-}^+$$
Thereby the limit of $\Ymin$ exists
$$\lim_{t\to +\infty} \Ymin_{t \wedge \S}= \Theta^{-1}\left( \Theta(\xi) -\Phi_{ \S-}^+ \right) \geq \xi.$$
In other words, $\Ymin$ is a c\`adl\`ag process. 
\begin{proof}
We follow the arguments developed in the proof of \cite[Lemma 2.3]{popi:16}. We only have to handle the stopping time $\S$. 
Since $Y^{(k)}_t$ is bounded from below by zero, we can apply It\^o's formula: for $0 \leq t \leq T$
\begin{eqnarray} \nonumber
&& \Theta(Y^{(k)}_{t\wedge \S}) = \Theta(Y^{(k)}_{T\wedge \S}) + \int_{t\wedge \S}^{T\wedge \S} \Theta'(Y^{(k)}_{s-}) f(s,Y^{(k)}_s,Z^{(k)}_s,U^{(k)}_s) ds \\ \nonumber
&&\quad  -  \int_{t\wedge \S}^{T\wedge \S} \Theta'(Y^{(k)}_{s-}) Z^{(k)}_s dW_s - \int_{t\wedge \S}^{T\wedge \S} \Theta'(Y^{(k)}_{s-})  \int_{\cE} U^{(k)}_s(e) \tpi(de,ds)- \int_{t\wedge \S}^{T\wedge \S}  \Theta'(Y^{(k)}_{s-})  dM^{(k)}_s \\ \nonumber
&&\quad  - \frac{1}{2} \int_{t\wedge \S}^{T\wedge \S} \Theta''(Y^{(k)}_{s-}) |Z^{(k)}_s|^2 ds - \frac{1}{2} \int_{t\wedge \S}^{T\wedge \S} \Theta''(Y^{(k)}_{s-}) d[M^{(k)}]^c_s \\ \nonumber
&&\quad  -  \int_{t\wedge \S}^{T\wedge \S} \int_\cE \left[\Theta(Y^{(k)}_{s-} + U^{(k)}_s(e))-\Theta(Y^{(k)}_{s-})-\Theta'(Y^{(k)}_{s-})U^{(k)}_s(e)\right]\pi(ds,de) \\ \nonumber
&&\quad  -  \sum_{t\wedge \S <s \leq T\wedge \S} \left[\Theta(Y^{(k)}_{s-}+ \Delta M^{(k)}_s)-\Theta(Y^{(k)}_{s-})-\Theta'(Y^{(k)}_{s-})\Delta M^{(k)}_s \right] \\  \label{eq:Ito_form_Theta}  
&& = \bE^{\cF_t} \Theta(Y^{(k)}_{T\wedge \S}) - \Phi^{(k)}_{t\wedge \S,T \wedge \S}
\end{eqnarray}
where 
\begin{align*}
\Phi^{(k)}_{t\wedge \S,T \wedge \S} & = -\bE^{\cF_t}  \int_{t\wedge \S}^{T\wedge \S} \Theta'(Y^{(k)}_{s-}) f(s,Y^{(k)}_s,Z^{(k)}_s,U^{(k)}_s) ds +\frac{1}{2} \bE^{\cF_t} \int_{t\wedge \S}^{T\wedge \S} \Theta''(Y^{(k)}_{s-}) |Z^{(k)}_s|^2 ds \\
& + \frac{1}{2}\bE^{\cF_t}  \int_{t\wedge \S}^{T\wedge \S} \Theta''(Y^{(k)}_{s-}) d[M^{(k)}]^c_s  \\
& + \bE^{\cF_t}  \sum_{t\wedge \S <s \leq T\wedge \S} \left[\Theta(Y^{(k)}_{s-}+ \Delta M^{(k)}_s)-\Theta(Y^{(k)}_{s-})-\Theta'(Y^{(k)}_{s-})\Delta M^{(k)}_s \right]\\
&+  \bE^{\cF_t}\int_{t\wedge \S}^{T\wedge \S} \int_\cE \left[\Theta(Y^{(k)}_{s-} + U^{(k)}_s(e))-\Theta(Y^{(k)}_{s-})-\Theta'(Y^{(k)}_{s-})U^{(k)}_s(e)\right]\pi(ds,de) .
\end{align*}
We use the decomposition \eqref{eq:decomp_gene} of the generator $f$. Since $\Theta$ is non increasing and convex, the next terms are non-negative:
\begin{align*}
& \bE^{\cF_t}  \sum_{t\wedge \S <s \leq T\wedge \S} \left[\Theta(Y^{(k)}_{s-}+ \Delta M^{(k)}_s)-\Theta(Y^{(k)}_{s-})-\Theta'(Y^{(k)}_{s-})\Delta M^{(k)}_s \right] \\
&  \frac{1}{2}\bE^{\cF_t}  \int_{t\wedge \S}^{T\wedge \S} \Theta''(Y^{(k)}_{s-}) d[M^{(k)}]^c_s \\
& -\bE^{\cF_t}  \int_{t\wedge \S}^{T\wedge \S} \Theta'(Y^{(k)}_{s-})  f^0_s ds
\end{align*}
and we can use monotone convergence theorem to pass to the limit as $T$ tends to $+\infty$. 

Starting for the inequality: $ \varpi(s,z,\psi)  \geq - L_z |z|$, and using the concavity of $g$, we obtain that 
$$- \Theta'(Y^{(k)}_{s-}) \varpi(s,Z^{(k)}_s,U^{(k)}_s) ds +\frac{1}{2} \Theta''(Y^{(k)}_{s-}) |Z^{(k)}_s|^2 ds \geq \dfrac{L_z^2}{2g'(Y^{(k)}_{s})} \geq  \dfrac{L_z^2}{2g'(0)} .$$
And
$$ - \Theta'(Y^{(k)}_{s-}) \phi(s,Y^{(k)}_s,Z^{(k)}_s,U^{(k)}_s) \geq - b_s.$$
Since $\bE \int_0^\S b_s < +\infty$ and from \ref{B3}, we deduce that the negative part of
$$- \Theta'(Y^{(k)}_{s-}) \left[ f(s,Y^{(k)}_s,Z^{(k)}_s,U^{(k)}_s) -f(s,0,0,U^{(k)}_s)\right] ds +\frac{1}{2}\Theta''(Y^{(k)}_{s-}) |Z^{(k)}_s|^2 ds $$
is bounded in $L^1$, uniformly w.r.t. $(T,k)$. The remaining term is
\begin{align*}
& -\bE^{\cF_t}  \int_{t\wedge \S}^{T\wedge \S} \Theta'(Y^{(k)}_{s-}) \left[ f(s,0,0,U^{(k)}_s) -f^0_s \right] ds  \\
& +  \bE^{\cF_t}\int_{t\wedge \S}^{T\wedge \S} \int_\cE \left[\Theta(Y^{(k)}_{s-} + U^{(k)}_s(e))-\Theta(Y^{(k)}_{s-})-\Theta'(Y^{(k)}_{s-})U^{(k)}_s(e)\right]\pi(ds,de) . 
\end{align*}

Assume that $f$ does not depend on $\psi$ or that $\varrho(s,\psi) \geq 0$ ({\bf Case 1}). Again from the convexity of $\Theta$, this last term is non-negative. Our previous arguments show that we can pass to the limit when $T$ goes to $+\infty$ in \eqref{eq:Ito_form_Theta}:
$$ \Theta(Y^{(k)}_{t\wedge \S})  =  \bE^{\cF_t} \Theta( \xi \wedge  k ) - \Phi^{(k)}_{t\wedge \S,\S}.$$
Then by monotone convergence theorem, we obtain the convergence (in $\bL^1$) of $\Phi^{(k)}_{t\wedge \S,\S}$ to some process $\Phi_t$ and:
\begin{equation} \label{eq:explicit_expr_Y}
\Theta(Y_{t\wedge \S})= \bE^{\cF_t}[\Theta(\xi)] -\Phi_{t\wedge \S} .
\end{equation}
We can decompose the process $\Phi$:
$$\Phi_{t\wedge \S}  = \Phi^+_{t\wedge \S}  - \Phi^-_{t\wedge \S} ,$$
such that $ \Phi^+ $ and $ \Phi^-$ are non-negative c\`adl\`ag supermartingales with:
$$ \Phi^-_{t\wedge \S}  \leq \bE^{\cF_t} \int_{t\wedge \S}^{\S} \left( b_s - \dfrac{L_z^2}{2g'(0)} \right) ds.$$
In particular a.s. 
$$\lim_{t\to+\infty} \Phi^-_{t\wedge \S} = 0.$$
For the {\bf Case 2} and the {\bf Case 3}, we can exactly use the same arguments as in \cite{popi:16}. We skip them here. 
This achieves the proof of the theorem.
\end{proof}

\begin{remark}
A careful reading shows that {\rm \ref{B1}} is unnecessary. We only need that the function $\Theta$ is well-defined. 
\end{remark}

\section{Markovian terminal conditions} \label{sect:markov_setting}

In this section, we assume that Conditions {\bf (A)} and {\bf (B)} hold and that $\S$ is given by \eqref{eq:def_stop_time}. Thereby $\S$ is a solvable stopping time (Theorem \ref{thm:main_thm_2}). We further suppose that 
\begin{enumerate}[label=\textbf{(D\arabic*)}]
\item\label{D1} The terminal data $\xi$ satisfies
$$\xi = g(\Xi_{\S}),$$
where $g : \bR^{d} \to \overline{\bR_{+}}$ is a function such that $F_{\infty} = \left\{ g = + \infty \right\} \cap \partial D$ is a closed set.
\item\label{D2} On $\bR^{d} \setminus F_{\infty}$, $g$ is locally bounded, that is, for all compact set $\mathcal{K} \subset \bR^{d} \setminus F_{\infty}$, 
$$g \mathbf 1_{\mathcal{K}} \in L^{\infty}(\bR^{d}).$$
\item \label{D3} The boundary $\partial D$ belongs to $C^{3}$.
\end{enumerate}

To obtain the continuity, we start with a technical result. We know that estimate \eqref{eq:upp_bound_rand_time} holds:
\begin{equation*}
0 \leq Y_{t\wedge \S}^{(k)} \le \Ymin_{t\wedge \S} \leq \frac C{ \dist(\Xi_{t\wedge \S})^{2(p-1)}}.
\end{equation*}
The constant $C$ depends on $q$, $D$ and the bound on $b$ and $\sigma$. Here we construct another estimate which depends also on the function $g$.  
\begin{lemma} \label{lem:major_Ymin_with_g}
If $U$ is an open set such that $\overline{U} \cap F_{\infty} = \emptyset$ and $U \cap \partial D \neq \emptyset$, then there exists a constant $C = C(U,g,q,b,\sigma,D)$ and an open set $D_{U}$ such that $D \subset D_{U}$ and if $\dist_{U}$ denotes the distance to the boundary of $D_{U}$, we have
\begin{equation} \label{majortech}
\bP - \mbox{a.s.} \quad \forall k \in \bN, \ \forall t \geq 0, \ Y^{(k)}_{t} \leq \frac{C}{\left( \dist_{U}(\Xi_{t \wedge \S}) \right)^{2(p-1)}}.
\end{equation}
Recall that $\S$ is always the first exit time from $\overline{D}$.
\end{lemma}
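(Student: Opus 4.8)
The plan is to refine the Keller--Osserman bound \eqref{eq:upp_bound_rand_time} by replacing $D$ with a slightly larger smooth domain $D_U$, enlarged only in the region near $U$ where $g$ is bounded, while keeping $\partial D_U = \partial D$ in a neighbourhood of the singular set $F_\infty$. On such a domain the barrier still blows up near $F_\infty$ (so the singularity is respected), but near $U\cap\partial D$ the new distance $\dist_U$ stays bounded away from $0$, which yields the improved, $g$-dependent estimate. The comparison will be carried out against the explicit deterministic supersolution $w(x)=C\,\dist_U(x)^{-2(p-1)}$ with $C$ chosen large. The point distinguishing $w$ from the minimal supersolution attached to $D_U$ is that a large $C$ forces $w$ to exceed $g$ everywhere on the enlarged part of the boundary, whereas the minimal supersolution would be too small in the interior of $D_U$ to dominate $g(\Xi_\S)$ at exit points lying inside $D_U$.

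First I would construct $D_U$. Since $F_\infty$ is closed and $\overline U\cap F_\infty=\emptyset$, we have $\dist(\overline U,F_\infty)>0$, so we may pick an open set $V$ with $\overline U\cap\partial D\subset V$, $\overline V$ compact and $\overline V\cap F_\infty=\emptyset$. Using a smooth outward normal collar attached to $\partial D$ and a cut-off supported in $V$ (this is where \ref{D3}, $\partial D\in C^3$, is used), one builds a bounded domain $D_U$ with $C^3$ boundary such that $D\subset D_U$, $\partial D_U=\partial D$ on $\bR^d\setminus\overline V$ (in particular in a neighbourhood of $F_\infty$), and $U\cap\partial D$ lies in the interior of $D_U$. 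Consequently $\dist_U$ still vanishes on $F_\infty$ and agrees with $\dist$ near it, so the barrier keeps its blow-up there, while $\dist_U>0$ on $U\cap\partial D$, which is the source of the improved bound. On the compact set $\overline V\cap\partial D\subset\bR^d\setminus F_\infty$, assumption \ref{D2} gives $M:=\sup_{\overline V\cap\partial D}g<+\infty$.

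Next I would verify that $w(x)=C\,\dist_U(x)^{-2(p-1)}$, smoothed in the interior away from $\partial D_U$ by a $C^2$ surrogate comparable to $\dist_U$ so as to avoid the cut locus, defines a supersolution on $[\![0,\S]\!]$. Applying It\^o's formula to $w(\Xi_{t\wedge\S})$ and using \ref{B1}, it suffices to check the pointwise inequality $\mathcal{L}w+f(t,0,\nabla w\,\sigma,0)\le w^{q}/\eta_t$ on $\overline D$, where $\mathcal{L}$ is the infinitesimal generator of $\Xi$. Near $F_\infty$ this is exactly the balance used in \cite[Proposition 6]{krus:popi:15}: since $(p-1)q=p$, both $\mathcal{L}w$ and $w^{q}/\eta_t$ are of order $\dist^{-2p}$ and the nonlinear term dominates once $C$ is large because $q>1$; away from $F_\infty$ the function $w$ is bounded below by a large constant, so $w^{q}/\eta_t$ dominates the bounded quantity $\mathcal{L}w+f(t,0,\nabla w\,\sigma,0)$ outright. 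For the terminal inequality $w(\Xi_\S)\ge g(\Xi_\S)\wedge k$ at the exit point $\Xi_\S\in\partial D$: on $\partial D\cap\partial D_U$ (near $F_\infty$) we have $\dist_U(\Xi_\S)=0$, hence $w=+\infty$; on the enlarged part $\partial D\cap V$ we have $g\le M$ while $w\ge C/R^{2(p-1)}$ with $R=\mathrm{diam}(D_U)$, so choosing $C\ge M\,R^{2(p-1)}$ (independently of $k$) gives $w\ge M\ge g\wedge k$. Since the resulting constant does not depend on $k$, the comparison principle for BSDEs with generators satisfying \textbf{(A)} then yields $Y^{(k)}_{t\wedge\S}\le w(\Xi_{t\wedge\S})\le C'\,\dist_U(\Xi_{t\wedge\S})^{-2(p-1)}$ for all $k$ and $t$; the case $t\ge\S$ is immediate because there $Y^{(k)}_t=g(\Xi_\S)\wedge k$ and $\Xi_{t\wedge\S}=\Xi_\S$.

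The main obstacle I anticipate is the geometric and regularity part: constructing $D_U$ with a $C^3$ boundary that agrees with $\partial D$ near $F_\infty$ and bulges outward past $U\cap\partial D$, and producing a genuinely $C^2$ barrier comparable to $\dist_U$, since the signed distance is only smooth in a tubular neighbourhood of $\partial D_U$ and must be glued to a smooth interior function before differentiating. Once the barrier is in place the analytic estimate repeats the computation in \cite[Proposition 6]{krus:popi:15}; the only genuinely new ingredient is the observation that choosing $C$ large enough (uniformly in $k$, using boundedness of $g$ on $\overline V\cap\partial D$ via \ref{D2}) makes $w$ dominate the truncated terminal data on the enlarged boundary, which is what produces the sharper, $g$-dependent bound.
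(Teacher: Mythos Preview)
Your proposal is correct and follows exactly the route the paper indicates: enlarge $D$ to a smooth $D_U$ that coincides with $D$ near $F_\infty$ but bulges outward past $U\cap\partial D$, then run the Keller--Osserman barrier construction of \cite[Proposition 6]{krus:popi:15} on $D_U$, the terminal comparison being closed by \ref{D2} (boundedness of $g$ on the enlarged part) as in \cite[Proposition 7]{popi:07}. Your identification of the geometric smoothing of $\dist_U$ as the only delicate point, and of the exponent balance $(p-1)q=p$ as the mechanism making $w^q/\eta$ dominate $\mathcal L w$, matches the cited arguments precisely.
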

The proof is a straightforward adaptation of \cite[Proposition 7]{popi:07} and \cite[Proposition 6]{krus:popi:15}. The second technical result concerns $(\Zmin,\Umin)$, and it is the extension of \cite[Propositions 4 and 8]{popi:07} (a similar result was not proven in \cite{krus:popi:15}). 
\begin{lemma} \label{lem:major_Zmin_with_g}
Under assumptions {\rm {\bf (A)}} and {\rm {\bf (B)}}, for any $\eps > 1$, there exists a constant $C$ such that 
$$ \bE \int_{0}^{\S} \left( \| \Zmin_{r} \|^{2} +  \int_{\cE} \left| \Umin_r(e) \right|^2 \mu(de)  \right) \dist(\Xi_{r})^{4(p-1)+\eps} dr \leq C.$$
This inequality holds if we replace $\Zmin$ and $\Umin$ by $Z^{(k)}$ and $U^{(k)}$. If Condition {\rm {\bf (D)}} holds, then we can replace $\dist$ by $\dist_U$, with a modification of the value of the constant $C$. 
\end{lemma}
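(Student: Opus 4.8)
The plan is to establish the estimate first for the truncated solutions $(Y^{(k)},Z^{(k)},U^{(k)},M^{(k)})$ of \eqref{eq:truncated_bsde}, which live in the integrability spaces where It\^o's formula and the vanishing under expectation of the local-martingale terms are justified, and only then to pass to the limit $k\to\infty$. Throughout I write $\beta := 4(p-1)+\eps$ and I lean on the Keller--Osserman bound \eqref{eq:upp_bound_rand_time}, $0\le Y^{(k)}_{t\wedge\S}\le C\,\dist(\Xi_{t\wedge\S})^{-2(p-1)}$; its crucial consequence here is that the weighted square $\dist(\Xi)^{\beta}(Y^{(k)})^2\le C\,\dist(\Xi)^{\eps}$ stays \emph{bounded} up to the boundary.

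The first step is to apply It\^o's formula to the product $\dist(\Xi_t)^{\beta}(Y^{(k)}_t)^2$ on $[\![0,\S_n]\!]$, with $\S_n$ the localizing sequence \eqref{eq:def_tau_eps} (so $\dist$ is bounded below and at least $C^2$, $\partial D$ being $C^2$), and then take expectations. Since $\Xi$ is continuous, the product rule produces five kinds of terms: the genuine quadratic-variation term $\bE\int_0^{\S_n}\dist(\Xi_r)^{\beta}(|Z^{(k)}_r|^2+\int_\cE|U^{(k)}_r(e)|^2\mu(de))dr$ (together with a nonnegative $d[M^{(k)}]$ contribution, which I keep on the favourable side); the boundary and initial terms $\bE[\dist(\Xi_{\S_n})^{\beta}(Y^{(k)}_{\S_n})^2]$ and $\dist(\Xi_0)^{\beta}(Y^{(k)}_0)^2$; the drift term $2\bE\int_0^{\S_n}\dist(\Xi_r)^{\beta}Y^{(k)}_r f(r,Y^{(k)}_r,Z^{(k)}_r,U^{(k)}_r)dr$; the second-order term $-\bE\int_0^{\S_n}(Y^{(k)}_r)^2\,(\mathcal{L}\dist^{\beta})(\Xi_r)dr$, where $\mathcal{L}$ is the generator of $\Xi$; and the continuous cross-variation term $-2\bE\int_0^{\S_n}Y^{(k)}_r\,Z^{(k)}_r\cdot(\sigma^\top\nabla(\dist^{\beta}))(\Xi_r)dr$. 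The goal is to move the quadratic-variation term to the left and bound all others uniformly in $k$ and $n$.

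The boundary and initial terms are controlled by the boundedness remarked above. For the drift term I split $f$ as in \eqref{eq:decomp_gene}: by \ref{B1} the $y$-part equals $-\dist^{\beta}(Y^{(k)})^{q+1}/\eta\le 0$ and is discarded; the $z$- and $\psi$-increments are bounded via \ref{A4} and \ref{A3} by $L_z|Z^{(k)}|$ and $L_\vartheta\|U^{(k)}\|$, and a Young inequality reallocates a fraction $\tfrac14\dist^{\beta}(|Z^{(k)}|^2+\|U^{(k)}\|^2)$ into the quadratic-variation term while leaving a remainder of the harmless form $C\dist^{\beta}(Y^{(k)})^2\le C\dist^{\eps}$; the $f^0$-part is handled with \ref{B2}. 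For the last two terms I use $|(\mathcal{L}\dist^{\beta})|\le C\dist^{\beta-2}$ and $|(\sigma^\top\nabla(\dist^{\beta}))|\le C\dist^{\beta-1}$ near $\partial D$; after one more Young inequality (again absorbing $\tfrac14\dist^{\beta}|Z^{(k)}|^2$) both are dominated by $C(Y^{(k)})^2\dist^{\beta-2}\le C\dist^{\eps-2}$.

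Everything has now been reduced to the finiteness of the weighted occupation integrals $\bE\int_0^{\S}\dist(\Xi_r)^{\gamma}dr$ for $\gamma\in\{\eps,\eps-2\}$, and the binding exponent is $\gamma=\eps-2$. This is exactly an occupation-time (Green's-function) estimate for the diffusion $\Xi$ killed on $\partial D$, and the hypothesis $\eps>1$ is precisely what forces $\gamma>-1$, the threshold below which this integral is finite; this is where the behaviour of $\Xi$ at the (non-characteristic) boundary enters, as in \cite{popi:07}. I expect this occupation-time control to be the main obstacle. Once it is available, collecting the bounds gives a constant independent of $k$ and $n$, so letting $n\to\infty$ and then $k\to\infty$ (using Fatou together with the convergence $Z^{(k)}\to\Zmin$, $U^{(k)}\to\Umin$ against the fixed weight $\dist^{\beta}$) yields the stated estimate for $\Zmin,\Umin$. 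Finally, under Condition \textbf{(D)} I would run the identical computation with $\dist$ replaced by $\dist_U$ and \eqref{eq:upp_bound_rand_time} replaced by the sharper bound \eqref{majortech} of Lemma \ref{lem:major_Ymin_with_g}; since $D\subset D_U$, the same estimates survive on the enlarged domain and only the constant changes.
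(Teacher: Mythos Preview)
Your outline matches the paper's proof in structure and spirit: apply It\^o's formula to the weighted square $(Y^{(k)})^2$ times a power of a distance-type function, control the resulting pieces via the Keller--Osserman bound \eqref{eq:upp_bound_rand_time} and the decomposition \eqref{eq:decomp_gene}, reduce everything to the occupation-time estimate $\bE\int_0^{\S}\dist(\Xi_r)^{\eps-2}dr<\infty$ borrowed from \cite{popi:07}, and pass to the limit by Fatou. Your use of Young's inequality in place of the paper's Cauchy--Schwarz (which produces a quadratic inequality in the square root of the desired quantity) is a harmless variant.

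There is one technical slip worth flagging: your claim that on $[\![0,\S_n]\!]$ the function $\dist$ is $C^2$ is not justified. The regularity result from \cite{gilb:trud:01} gives $\dist\in C^2(D_{\lambda_0})$ only in a tubular neighbourhood of $\partial D$; in the interior $\dist$ can fail to be differentiable (think of the centre of a ball, or the medial axis of any nonconvex domain). Since $\S_n$ keeps $\Xi$ \emph{away} from the boundary, you are precisely in the region where $\dist$ may be singular. The paper handles this by replacing $\dist$ with a $C^2$ function $\zeta$ equal to $\dist$ near $\partial D$ and to a constant $R$ in the interior, proving the estimate for $\zeta$, and then invoking $\zeta\ge\dist$ on $\overline D$ at the end. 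With this modification your argument goes through.
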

\begin{proof}
The beginning of the proof is similar to \cite[Proposition 6]{krus:popi:15}. Let $\lambda>0$ and introduce the set $D_\lambda=\{x\in \bR^d , \ |\dist(x)|\le \lambda\}$. Then it follows from Lemma 14.16 in \cite{gilb:trud:01} that there exists a positive constant $\lambda_0$ such that $\dist \in C^2(D_{\lambda_0})$. Since $D$ is bounded there exists a constant $R>0$ such that $0\le \dist(x)\le R$ for all $x\in \overline D$. Let $\varphi\in C^\infty(\bR^d,[0,1])$ with $\varphi=1$ on $\bR^d\setminus D_{\lambda_0}$ and $\varphi=0$ on $D_{\lambda_0/2}$. We define a function $\zeta \in C^2(\bR^d,\bR_+)$ such that $\zeta=(1-\varphi)\dist+R\varphi$ on $\overline D$. Since $\zeta \ge \dist \geq 0$ on $\overline D$, $x \mapsto |\zeta(x)|^{4(p-1) + \eps}$ is not in $C^{2} (\bR^{d})$, but this function belongs to $C^{2}(D \setminus D_{\lambda_0})$ and we can define this function on the rest of $(\bR^{d} \setminus D) \cup D_{\lambda_0}$ in order to have the required regularity. For $\lambda < \lambda_0$, define 
$$S_\lambda = \inf \{ t\geq 0, \ \Xi_t \in D_\lambda\}.$$
Take $\lambda$ sufficiently small such that $\Xi_0 \in D \setminus D_{\lambda}$. 
The It\^{o} formula leads to:
\begin{align} \nonumber
& \left( Y^{(k)}_{t \wedge \S_{\lambda}} \right)^{2} \zeta(\Xi_{t \wedge \S_{\lambda}})^{4(p-1)+\eps} = \left( Y^{(k)}_{0} \right)^{2} \zeta(\Xi_{0})^{4(p-1)+\eps}  + \int_{0}^{t \wedge \S_{\lambda}} \| Z^{(k)}_{r} \|^{2} \zeta (\Xi_{r})^{4(p-1)+\eps} dr \\ \nonumber
& \quad  - 2 \int_{0}^{t \wedge \S_{\lambda}} Y^{(k)}_{r} f(r,Y^{(k)}_{r},Z^{(k)}_{r},U^{(k)}_{r}) \zeta(\Xi_{r})^{4(p-1)+\eps} dr \\ \nonumber
& \quad +  2 \int_{0}^{t \wedge \S_{\lambda}} Y^{(k)}_{r} \zeta(\Xi_{r})^{4(p-1)+\eps}\left(  Z^{(k)}_{r} dW_{r} + \int_\cE U^{(k)}_r(e) \tpi(de,dr) + dM^{(k)}_r \right) \\ \nonumber
& \quad + \int_{0}^{t \wedge \S_{\lambda}} \zeta (\Xi_{r})^{4(p-1)+\eps}  \int_{\cE} \left| U^{(k)}_r(e) \right|^2 \pi(de,dr) \\ \nonumber
& \quad + \int_{0}^{t \wedge \S_{\lambda}} \zeta (\Xi_{r})^{4(p-1)+\eps}  d[M^{(k)}]^c_r + \sum_{0< s \leq  t \wedge \S_{\lambda}}  \zeta (\Xi_{r})^{4(p-1)+\eps} (\Delta M^{(k)}_r )^2 \\ \nonumber
& \quad + (4(p-1)+\eps) \int_{0}^{t \wedge \S_{\lambda}} (Y^{(k)}_{r})^{2} \zeta(\Xi_{r})^{4(p-1)+\eps -1}\nabla \zeta(\Xi_{r}) \left( b(\Xi_{r}) dr + \sigma(\Xi_{r}) dW_{r} \right) \\ \nonumber
& \quad + \frac{ (4(p-1)+\eps)}{2} \int_{0}^{t \wedge \S_{\lambda}} (Y^{(k)}_{r})^{2} \left[ (4(p-1)+\eps -1) \zeta(\Xi_{r})^{4(p-1)+\eps -2} \|\sigma(\Xi_{r}) \nabla \zeta (\Xi_{r}) \|^{2} \right.\\ \nonumber
& \qquad \qquad \qquad \qquad \left.+ \zeta(\Xi_{r})^{4(p-1)+\eps -1} \tr \left( \sigma \sigma^{*} (\Xi_{r}) D^{2} \zeta(\Xi_{r}) \right) \right] dr \\ \label{eq:Ito_form_control_Z_psi}
& \quad +  2(4(p-1)+\eps) \int_{0}^{t \wedge \S_{\lambda}} Y^{(k)}_{r} \zeta(\Xi_{r})^{4(p-1)+\eps -1} Z^{(k)}_{r} \nabla \zeta(\Xi_{r}) \sigma(\Xi_{r}) dr.
\end{align} 
Taking the expectation removes all martingale terms. 
From \eqref{eq:upp_bound_rand_time}, we know that there exists a constant such that for any $k$ and all $ t\geq 0$, 
$$\left( Y^{(k)}_{t \wedge \S_{\lambda}} \right)^{2} \zeta(\Xi_{t \wedge \S_{\lambda}})^{4(p-1)} \leq C.$$
Thereby the terms 
\begin{align*}
&  (4(p-1)+\eps) \int_{0}^{t \wedge \S_{\lambda}} (Y^{(k)}_{r})^{2} \zeta(\Xi_{r})^{4(p-1)+\eps -1}\nabla \zeta(\Xi_{r}) b(\Xi_{r}) dr\\
& \quad + \frac{ (4(p-1)+\eps)}{2} \int_{0}^{t \wedge \S_{\lambda}} (Y^{(k)}_{r})^{2} \left[ (4(p-1)+\eps -1) \zeta(\Xi_{r})^{4(p-1)+\eps -2} \|\sigma(\Xi_{r}) \nabla \zeta (\Xi_{r}) \|^{2} \right.\\
& \qquad \qquad \qquad \qquad \left.+ \zeta(\Xi_{r})^{4(p-1)+\eps -1} \tr \left( \sigma \sigma^{*} (\Xi_{r}) D^{2} \zeta(\Xi_{r}) \right) \right] dr
\end{align*} 
are bounded by
$$C \left( \bE \int_{0}^{\S} \zeta^{\eps-1}(\Xi_{r}) dr +\bE \int_{0}^{\S} \zeta^{\eps-2}(\Xi_{r}) dr \right).$$
For $\eps > 1$, the arguments developed in the proof of \cite[Proposition 4]{popi:07} show that these integrals are finite. The Cauchy--Schwarz inequality leads to
\begin{eqnarray*}
&&\left| \bE \int_{0}^{t \wedge \S_{\lambda}} Y^{(k)}_{r} \zeta(\Xi_{r})^{4(p-1)+\eps -1} Z^{(k)}_{r} \nabla \zeta(\Xi_{r}) 
\sigma(\Xi_{r}) dr \right| \\
&& \quad \leq  \left( \bE \int_{0}^{t \wedge \S_{\lambda}} \|Z^{(k)}_{r} \|^{2} \zeta(\Xi_{r})^{4(p-1)+\eps} dr \right)^{1/2} \\
&& \quad \qquad  \times \left( \bE \int_{0}^{t \wedge \S_{\lambda}} (Y^{(k)}_{r})^{2}
\zeta(\Xi_{r})^{4(p-1)+\eps -2} \| \nabla \zeta(\Xi_{r}) \sigma(\Xi_{r}) \|^{2} dr \right)^{1/2} .
\end{eqnarray*} 
But since $\nabla \zeta$ and $\sigma$ are bounded,
$$\bE \int_{0}^{t \wedge \S_{\lambda}} (Y^{(k)}_{r})^{2}
\zeta(\Xi_{r})^{\frac{4}{q}+\eps -2} \| \nabla \zeta(\Xi_{r})\sigma(\Xi_{r}) \|^{2} dr \leq C \bE \int_{0}^{\S} \zeta^{\eps-2}(\Xi_{r}) dr < +\infty.$$
Compared to \cite{popi:07}, the novelties are the generator $f$ and the terms $U^{(k)}$ and $M^{(k)}$. First using \eqref{eq:decomp_gene}:
\begin{align*} 
& - 2 \int_{0}^{t \wedge \S_{\lambda}} Y^{(k)}_{r} f(r,Y^{(k)}_{r},Z^{(k)}_{r},U^{(k)}_{r}) \zeta(\Xi_{r})^{4(p-1)+\eps} dr  \\
& = - 2 \int_{0}^{t \wedge \S_{\lambda}} Y^{(k)}_{r}f^0_r \zeta(\Xi_{r})^{4(p-1)+\eps} dr - 2 \int_{0}^{t \wedge \S_{\lambda}} Y^{(k)}_{r} \varrho(r,U^{(k)}_r) \zeta(\Xi_{r})^{4(p-1)+\eps} dr \\
&  - 2 \int_{0}^{t \wedge \S_{\lambda}} Y^{(k)}_{r} \varpi(r,Z^{(k)}_r, U^{(k)}_r) \zeta(\Xi_{r})^{4(p-1)+\eps} dr- 2 \int_{0}^{t \wedge \S_{\lambda}} Y^{(k)}_{r} \phi(r,Z^{(k)}_r, U^{(k)}_r) \zeta(\Xi_{r})^{4(p-1)+\eps} dr.
\end{align*} 
We know that $|Y^{(k)}_{r}f^0_r \zeta(\Xi_{r})^{4(p-1)+\eps}| \leq C$. 
From \ref{A4}
$$\varpi(r,Z^{(k)}_r, U^{(k)}_r)  =\varpi^{(k)}_r  Z^{(k)}_r$$
with $|\varpi^{(k)}_r | \leq L_z$. Again by the Cauchy-Schwarz inequality and the previous arguments:
\begin{align*}
&\left| \bE \int_{0}^{t \wedge \S_{\lambda}} Y^{(k)}_{r} \varpi(r,Z^{(k)}_r, U^{(k)}_r) \zeta(\Xi_{r})^{4(p-1)+\eps} dr \right| \\
& \quad \leq C \left( \bE \int_{0}^{t \wedge \S_{\lambda}} \|Z^{(k)}_{r} \|^{2} \zeta(\Xi_{r})^{4(p-1)+\eps} dr \right)^{1/2} .
\end{align*} 
From \ref{A3} and similar arguments, we also have:
\begin{align*}
&\left| \bE \int_{0}^{t \wedge \S_{\lambda}} Y^{(k)}_{r} \varrho(r,  U^{(k)}_r) \zeta(\Xi_{r})^{4(p-1)+\eps} dr \right| \\
& \quad \leq C \left( \bE \int_{0}^{t \wedge \S_{\lambda}}\int_{\cE} (U^{(k)}_{r}(e) )^{2} \mu(de) \zeta(\Xi_{r})^{4(p-1)+\eps} dr \right)^{1/2} .
\end{align*} 
Note that with \ref{B1}
$$ 2 \bE   \int_{0}^{t \wedge \S_{\lambda}} Y^{(k)}_{r} \phi(r,Y^{(k)}_{r} ,Z^{(k)}_{r} ,U^{(k)}_{r} ) dr \leq - 2 \bE   \int_{0}^{t \wedge \S_{\lambda}}\frac{1}{\eta_r} \left| Y^{(k)}_{r} \right|^q   dr \leq 0.$$
Up to some localization procedure we have
\begin{align*}
& \bE \int_{0}^{t \wedge \S_{\lambda}} \zeta (\Xi_{r})^{4(p-1)+\eps}  \int_{\cE} \left| U^{(k)}_r(e) \right|^2 \pi(de,dr) \\
& \quad =  \bE \int_{0}^{t \wedge \S_{\lambda}} \zeta (\Xi_{r})^{4(p-1)+\eps}  \int_{\cE} \left| U^{(k)}_r(e) \right|^2 \mu(de) dr .
\end{align*}
Coming back to \eqref{eq:Ito_form_control_Z_psi} and taking the expectation, we obtain:
\begin{align*} \nonumber
& \bE \left( Y^{(k)}_{t \wedge \S_{\lambda}} \right)^{2} \zeta(\Xi_{t \wedge \S_{\lambda}})^{4(p-1)+\eps} -\bE  \left( Y^{(k)}_{0} \right)^{2} \zeta(\Xi_{0})^{4(p-1)+\eps} \\
& \quad -  (4(p-1)+\eps)\bE \int_{0}^{t \wedge \S_{\lambda}} (Y^{(k)}_{r})^{2} \zeta(\Xi_{r})^{4(p-1)+\eps -1}\nabla \zeta(\Xi_{r})  b(\Xi_{r}) dr \\
& \quad - \frac{ (4(p-1)+\eps)}{2} \bE \int_{0}^{t \wedge \S_{\lambda}} (Y^{(k)}_{r})^{2} \left[ (4(p-1)+\eps -1) \zeta(\Xi_{r})^{4(p-1)+\eps -2} \|\sigma(\Xi_{r}) \nabla \zeta (\Xi_{r}) \|^{2} \right.\\ \nonumber
& \qquad \qquad \qquad \qquad \left.+ \zeta(\Xi_{r})^{4(p-1)+\eps -1} \tr \left( \sigma \sigma^{*} (\Xi_{r}) D^{2} \zeta(\Xi_{r}) \right) \right] dr \\ 
& \quad + 2\bE   \int_{0}^{t \wedge \S_{\lambda}} Y^{(k)}_{r} f^0_r \zeta(\Xi_{r})^{4(p-1)+\eps} dr \\ \nonumber
& \geq \bE \int_{0}^{t \wedge \S_{\lambda}} \| Z^{(k)}_{r} \|^{2} \zeta (\Xi_{r})^{4(p-1)+\eps} dr - C \left( \bE \int_{0}^{t \wedge \S_{\lambda}} \|Z^{(k)}_{r} \|^{2} \zeta(\Xi_{r})^{4(p-1)+\eps} dr \right)^{1/2}\\ \nonumber
& \quad  - C \left( \bE \int_{0}^{t \wedge \S_{\lambda}}\int_{\cE} (U^{(k)}_{r}(e) )^{2} \mu(de) \zeta(\Xi_{r})^{4(p-1)+\eps} dr \right)^{1/2} \\ \nonumber
& \quad + \bE \int_{0}^{t \wedge \S_{\lambda}} \zeta (\Xi_{r})^{4(p-1)+\eps}  \int_{\cE} \left|U^{(k)}_r(e) \right|^2 \mu(de) dr.
\end{align*} 
The left-hand side of the inequality is bounded, uniformly w.r.t. $k$, $t$ and $\lambda$. Hence for all $k$, $t$ and $\lambda$,
$$ \bE \int_{0}^{t \wedge \S_{\lambda}} \left( \| Z^{(k)}_{r} \|^{2} +  \int_{\cE} \left| U^{(k)}_r(e) \right|^2 \mu(de)  \right) \zeta (\Xi_{r})^{4(p-1)+\eps} dr \leq C.$$
By Fatou's lemma, 
$$ \bE \int_{0}^{\S} \left( \| \Zmin_{r} \|^{2} +  \int_{\cE} \left| \Umin_r(e) \right|^2 \mu(de)  \right) \zeta (\Xi_{r})^{4(p-1)+\eps} dr \leq C.$$
Since $\zeta \geq \dist$ on $\overline{D}$, we obtain the announced result. If {\bf (D)} holds, we adapt the above arguments using  \eqref{majortech} instead of \eqref{eq:upp_bound_rand_time}.
\end{proof}

\begin{theorem}
Assume that Conditions {\rm {\bf (A)}}, {\rm {\bf (B)}} and {\rm {\bf (D)}} hold. Then a.s.
$$\liminf_{t\to +\infty} \Ymin_{t\wedge \S} = \xi.$$
\end{theorem}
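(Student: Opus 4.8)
Since $(\Ymin,\Zmin,\Umin,\Mmin)$ is a supersolution, \eqref{eq:term_cond_super_sol} already gives $\liminf_{t\to+\infty}\Ymin_{t\wedge\S}\ge\xi$ almost surely. The plan is therefore to establish the reverse bound $\limsup_{t\to+\infty}\Ymin_{t\wedge\S}\le\xi$ on the event $\{\Xi_\S\notin F_\infty\}=\{\xi<\infty\}$; on the complementary event $\{\Xi_\S\in F_\infty\}$ the terminal value is $+\infty$, so the supersolution bound already forces equality. Since $\partial D\setminus F_\infty$ is covered by countably many open sets $U$ with $\overline U\cap F_\infty=\emptyset$, it suffices to argue on each event $\{\Xi_\S\in U\}$ separately; on such a set $g\le G_U<\infty$ on $\overline U\cap\partial D$ by \ref{D2}.

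First I would remove the singularity near a good boundary point. On $\{\Xi_\S\in U\}$, Lemma \ref{lem:major_Ymin_with_g} gives $Y^{(k)}_{t}\le C\,\dist_U(\Xi_{t\wedge\S})^{-2(p-1)}$ with $D\subset D_U$. Because the exit point $\Xi_\S\in U\cap\partial D$ lies in the interior of $D_U$, we have $\dist_U(\Xi_{t\wedge\S})\to\dist_U(\Xi_\S)>0$, so $\Ymin=\sup_k Y^{(k)}$ is bounded, uniformly in $k$, on a neighbourhood of the exit time on this event. This is the step that turns the singular problem into a locally bounded one and lets the linearised coefficients below be genuinely bounded.

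Next I would control the gap $D^{k}:=\Ymin-Y^{(k)}\ge0$. As $\xi\wedge k$ is bounded, $Y^{(k)}$ is a classical solution (Theorem \ref{thm:rand_time_exist_sol_BSDE}) and is \cad, and left-continuity of $\bF$ at $\S$ excludes a jump at $\S$, so $Y^{(k)}_{t\wedge\S}\to\xi\wedge k$. Writing the linear BSDE solved by $D^k$ on $[\![0,\S_n]\!]$ and using \ref{A1}, \ref{A3}, \ref{A4} together with the superlinear damping \ref{B1}, one gets $f(r,\Ymin_r,\Zmin_r,\Umin_r)-f(r,Y^{(k)}_r,Z^{(k)}_r,U^{(k)}_r)=\alpha_r D^k_r+\beta_r(\Zmin_r-Z^{(k)}_r)+\langle\gamma_r,\Umin_r-U^{(k)}_r\rangle$, with $\beta,\gamma$ bounded and $\alpha_r$ strongly negative wherever $Y$ is large. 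After a Girsanov change of measure $\bQ$ absorbing the $\beta$ and $\gamma$ terms I would obtain
\[
0\le D^{k}_{t\wedge\S}\le\bE^{\bQ}\!\left[\exp\!\Big(\int_{t\wedge\S}^{\S}\alpha_r\,dr\Big)(\xi-\xi\wedge k)\,\Big|\,\cF_{t\wedge\S}\right].
\]
The weighted estimate of Lemma \ref{lem:major_Zmin_with_g} is exactly what guarantees that the stochastic integrals entering this linearisation are true martingales up to $\S$, so the change of measure is legitimate. On $\{\Xi_\S\in U\}$, for $t$ close to $\S$ the forward process exits through $U$, away from $F_\infty$; the contribution of future trajectories that would reach $F_\infty$ (where $\xi=+\infty$) is suppressed by the damping factor, which decays at the Keller--Osserman rate matching the blow-up \eqref{eq:upp_bound_rand_time}. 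Letting first $t\to+\infty$ and then $k\to+\infty$ forces the right-hand side to $0$, so $\limsup_t\Ymin_{t\wedge\S}\le\xi\wedge k\to\xi$ on $\{\Xi_\S\in U\}$, which combined with the first paragraph gives the claim.

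The main obstacle is precisely this last point: making rigorous that the blow-up set $F_\infty$ does not leak into a neighbouring regular boundary point. This is the probabilistic counterpart of the Keller--Osserman argument and requires balancing the superlinear damping \ref{B1} against the singular terminal mass carried on $F_\infty$, using the two a priori bounds (Lemmas \ref{lem:major_Ymin_with_g} and \ref{lem:major_Zmin_with_g}) in tandem; it is the part where the adaptation of the deterministic-time argument of \cite{popi:07} to a general filtration $\bF$ and a random generator $f$ is most delicate.
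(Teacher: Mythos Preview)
Your route is genuinely different from the paper's and contains a real gap at the very step you flag as the main obstacle.

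The paper never attempts a pathwise estimate on $\Ymin-Y^{(k)}$ or a Girsanov linearisation. It uses a weak, test-function argument: for a smooth $\varphi\ge 0$ with compact support in $U$ (so that $\varphi(\Xi_\S)$ vanishes near $F_\infty$), apply It\^o's formula to $e^{-\beta t}Y^{(k)}_t\varphi(\Xi_t)$, control every term uniformly in $k$ via Lemmas~\ref{lem:major_Ymin_with_g} and~\ref{lem:major_Zmin_with_g}, pass to the limit in $k$ by dominated convergence, and then let $t\to\infty$ with Fatou to obtain
\[
\bE\bigl[e^{-\beta\S}g(\Xi_\S)\varphi(\Xi_\S)\bigr]\ \ge\ \bE\Bigl[e^{-\beta\S}\varphi(\Xi_\S)\,\liminf_{t\to\infty}\Ymin_{t\wedge\S}\Bigr].
\]
Since pointwise $\liminf\ge g(\Xi_\S)$, equality of the $\liminf$ with $\xi$ follows on $\{\varphi(\Xi_\S)>0\}$, and varying $\varphi$ covers $\partial D\setminus F_\infty$. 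The singular set $F_\infty$ never enters: it is simply excised by the support of $\varphi$, so no cancellation between blow-up and damping is required.

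Your argument, by contrast, hinges exactly on such a cancellation, and it is not supported by the assumptions. First, \ref{B1} is a one-sided upper bound on $f(t,y,\cdot)$, not on its $y$-increments; the linearised coefficient $\alpha_r$ you obtain satisfies only $\alpha_r\le\chi$ from \ref{A1}, and the claim that $\alpha_r$ is ``strongly negative where $Y$ is large'' does not follow without an additional structural lower bound on $f$ (of the type \ref{C1}, which is not assumed in this theorem). Second, even granting strong damping, the conditional expectation of $\exp\bigl(\int_{t\wedge\S}^{\S}\alpha_r\,dr\bigr)(\xi-\xi\wedge k)$ is a $0\cdot\infty$ indeterminate on $\{\Xi_\S\in F_\infty\}$; turning ``letting $t\to\infty$ then $k\to\infty$ forces the right-hand side to $0$'' into a proof would require a quantitative matching of the blow-up rate of $\xi$ with the decay of the exponential factor along paths exiting through $F_\infty$---essentially a probabilistic Keller--Osserman estimate---which you do not provide. (There are also side issues: Lemma~\ref{lem:major_Zmin_with_g} gives weighted $L^2$ bounds with weight $\dist^{4(p-1)+\eps}$, which localise the martingale property to $[\![0,\S_n]\!]$ rather than $[\![0,\S]\!]$; and the Girsanov step in a general filtration with jumps and an orthogonal martingale $M$ needs more care than ``absorbing $\beta$ and $\gamma$''.) The paper's test-function approach is designed precisely to sidestep this balancing act.
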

\begin{proof}
The proof is based on the arguments developed in \cite[Theorem 2]{popi:07} and \cite[Theorem 3.5]{popi:16}. Thus we skip the details and we only evoke the main ideas. 

Recall that $F_{\infty} = \left\{ g = + \infty \right\} \cap \partial D$ is a closed set, that $U$ is an bounded open set such that $\overline{U} \cap F_{\infty} = \emptyset$ and $U \cap \partial D \neq \emptyset$. Now we take a function $\varphi : \bR^{d} \to \bR_{+}$ of class $C^{2}$ and with a compact support included in $U$. For $\beta > 0$ we apply the It\^{o} formula to the process $e^{- \beta t} Y^{(k)}_{t} \varphi (\Xi_{t})$: 
\begin{align} \nonumber
& \bE \left[ e^{- \beta \S} (g\wedge k)(\Xi_{\S}) \varphi (\Xi_{\S}) \right] =  \bE \left[ e^{- \beta (t \wedge \S)}Y^{(k)}_{t \wedge \S} \varphi (\Xi_{t \wedge \S}) \right] \\ \nonumber
& -\beta  \bE \int_{t\wedge \S}^{\S} e^{-\beta s} Y^{(k)}_{s}  \varphi (\Xi_{s}) ds - \bE \int_{t\wedge \S}^{\S} e^{-\beta s}  \varphi (\Xi_{s})  f(s,Y^{(k)}_s,Z^{(k)}_s,U^{(k)}_s) ds \\ \label{eq:ito_cont_markov}
& +  \bE \int_{t\wedge \S}^{\S} e^{-\beta s} Y^{(k)}_s \cL \varphi (\Xi_{s}) ds  + \bE \int_{t\wedge \S}^{\S} e^{-\beta s} \nabla \varphi (\Xi_{s}) \sigma(\Xi_s)Z^{(k)}_s ds .
\end{align}
$\beta > 0$ is here only to avoid time integrability trouble. 
Again we decompose $f$ using \eqref{eq:decomp_gene}. 
\begin{align*}
& \bE \int_{t\wedge \S}^{\S} e^{-\beta s}  \varphi (\Xi_{s})  f(s,Y^{(k)}_s,Z^{(k)}_s,U^{(k)}_s) ds \\
& =  \bE \int_{t\wedge \S}^{\S} e^{-\beta s}  \varphi (\Xi_{s})  f^0_s ds +  \bE \int_{t\wedge \S}^{\S} e^{-\beta s}  \varphi (\Xi_{s}) \left[ f(s,Y^{(k)}_s,Z^{(k)}_s,U^{(k)}_s) - f(s,0,Z^{(k)}_s,U^{(k)}_s) \right] ds \\
& +  \bE \int_{t\wedge \S}^{\S} e^{-\beta s}  \varphi (\Xi_{s}) \varpi^{(k)}_s Z^{(k)}_s  ds + \bE \int_{t\wedge \S}^{\S} e^{-\beta s}  \varphi (\Xi_{s}) \varrho(s,U^{(k)}_s ) ds .
\end{align*}
Using the previous lemma and the Cauchy-Schwarz inequality, arguing as in \cite{popi:07}, we deduce the existence of some constant $C$, independent of $k$ and $t$, such that 
$$ \bE \int_{t\wedge \S}^{\S} e^{-\beta s} \left|  \varphi (\Xi_{s}) \left(  \varpi^{(k)}_s Z^{(k)}_s  + \varrho(s,U^{(k)}_s ) \right) + \nabla \varphi (\Xi_{s}) \sigma(\Xi_s)Z^{(k)}_s  \right| ds  \leq C.$$
From Lemma \ref{lem:major_Ymin_with_g},  
$$ \bE \int_{t\wedge \S}^{\S} e^{-\beta s} Y^{(k)}_{s}  \left| \varphi (\Xi_{s}) + \cL \varphi (\Xi_{s}) \right| ds  \leq C.$$ 
Hence all terms in \eqref{eq:ito_cont_markov}, except maybe
$$- \bE \int_{t\wedge \S}^{\S} e^{-\beta s}  \varphi (\Xi_{s}) \left[ f(s,Y^{(k)}_s,Z^{(k)}_s,U^{(k)}_s) - f(s,0,Z^{(k)}_s,U^{(k)}_s) \right] ds,$$
are uniformly bounded. Thus this remaining term is also bounded and, thanks to \ref{B1}, is greater than
$$ \bE \int_{t\wedge \S}^{\S} e^{-\beta s}  \varphi (\Xi_{s})(Y^{(k)}_s)^qds.$$
The dominated convergence theorem and again Lemma \ref{lem:major_Zmin_with_g} imply that, up to a suitable subsequence, we can pass to the limit on $k$ in \eqref{eq:ito_cont_markov} to obtain for any $t\geq 0$:
\begin{align*} \nonumber
& \bE \left[ e^{- \beta \S} g(\Xi_{\S}) \varphi (\Xi_{\S}) \right] =  \bE \left[ e^{- \beta (t \wedge \S)} \Ymin_{t \wedge \S} \varphi (\Xi_{t \wedge \S}) \right] \\ \nonumber
& -\beta  \bE \int_{t\wedge \S}^{\S} e^{-\beta s} \Ymin_{s}  \varphi (\Xi_{s}) ds - \bE \int_{t\wedge \S}^{\S} e^{-\beta s}  \varphi (\Xi_{s})  f(s,\Ymin_s,\Zmin_s,\Umin_s) ds \\ & +  \bE \int_{t\wedge \S}^{\S} e^{-\beta s} \Ymin_s \cL \varphi (\Xi_{s}) ds  + \bE \int_{t\wedge \S}^{\S} e^{-\beta s} \nabla \varphi (\Xi_{s}) \sigma(\Xi_s)\Zmin_s ds .
\end{align*}
Using Fatou's lemma and letting $t$ go to $+\infty$, we deduce that 
$$ \bE \left[ e^{- \beta \S} g(\Xi_{\S}) \varphi (\Xi_{\S}) \right] \geq \bE \left[ e^{- \beta \S}  \varphi (\Xi_{ \S})  \liminf_{t\to +\infty} \Ymin_{t \wedge \S}\right].$$
The conclusion follows since a.s. 
$$\liminf_{t\to +\infty} \Ymin_{t \wedge \S} \geq g(\Xi_{\S}) .$$
We emphasize again that the technical details are in \cite{popi:07,popi:16} and are skipped in this paper. Note that since $\Xi$ is continuous, several technical issues of \cite{popi:16} are avoided here. 
\end{proof}

\subsection{Related elliptic PDE}\label{ss:relatedPDE}

Since \cite{darl:pard:97,pard:99}, it is well known that BSDEs with random terminal time and elliptic PDE are strongly related. Inspiring by \cite{lega:97,marc:vero:98b,marc:vero:98a}, \cite{popi:07} extended such result to singular boundary value for the elliptic PDE, when the generator $f$ is of the form $-y|y|^{q-1}$, $q > 1$. Let us now assume that $\S$ is given by \eqref{eq:def_stop_time}, that $f$ is a deterministic function\footnote{If the terminal time and the terminal values are deterministic functions of $\Xi_\S$, then the solution of the BSDE \eqref{eq:bsde} verifies $U=M=0$. Hence we can assume w.l.o.g. that $f$ does not depend on $U$ here.}, and that the terminal time is given by \ref{D1}, namely $\xi = g(\Xi_{\S})$. 
We consider the system: for any $x \in D$
\begin{align} \label{eq:fbsde_sde}
\Xi^x_t& =x + \int_0^t b(\Xi^x_r)dr+\int_0^t \sigma(\Xi^x_u)dW_u, \\ \label{eq:fbsde_bsde}
\Yminx_t &  =g(\Xi^x_\S) +  \int_{t}^{\S} f(\Xi^x_r,\Yminx_r,\Zminx_r) dr-  \int_{t}^{\S} \Zminx_r dW_r .
\end{align}
Of course, Equation \eqref{eq:fbsde_bsde} of this forward-backward SDE has to be understood in the sense of Definition \ref{def:sol_sing_BSDE_rtt}. 

We consider the elliptic PDE 
\begin{equation} \label{eq:elliptic_pde}
\left\{ \begin{array}{rl}
- \cL v - f(x,v, \nabla v \sigma^*)= 0 & \mbox{on} \ D;\\
v = g & \mbox{on} \ \partial D,
\end{array} \right.
\end{equation}
where the operator $\cL$ is the infinitesimal generator of $\Xi$.

The following definition can be found in \cite{barl:93}, \cite{barl:94} (or \cite{pard:99}, \cite{cran:ishi:lion:92} for $v$ continuous). If $v$ is a function defined on $\overline{D}$, we denote by $v^{*}$ (respectively $v_{*}$) the upper- (respectively lower-) semicontinuous envelope of $v$: for all $x \in \overline{D}$
$$v^{*} (x) = \limsup_{x' \to x, \ x' \in \overline{D}} v(x') \quad \mbox{and} \quad v_{*} (x) = \liminf_{x' \to x, \ x' \in \overline{D}} v(x').$$
The next definition holds for bounded boundary condition $g$. 
\begin{definition}[Viscosity solution] \label{def:sol_visc}
\noindent \begin{itemize}
\item $v : \overline{D} \to \bR$ is called a \textbf{viscosity subsolution} of \eqref{eq:elliptic_pde} if $v^{*}< + \infty$ on
$\overline{D}$ and if for all $\phi \in
C^{2}(\bR^{d})$, whenever $x \in \overline{D}$ is a point of local maximum of $v^{*} - \phi$,
\begin{eqnarray*}
- \cL \phi (x) - f (x,v^{*}(x), \nabla \phi(x) \sigma^*(x))  \leq 0 & \mbox{if} & x \in D; \\
\min \left( - \cL \phi (x) - f (x,v^{*}(x), \nabla \phi(x) \sigma^*(x)), v^{*}(x) - g(x) \right) \leq 0 & \mbox{if} & x \in \partial D.
\end{eqnarray*}
\item $v : \overline{D} \to \bR$ is called a \textbf{viscosity supersolution} of \eqref{eq:elliptic_pde} if $v_{*} > - \infty$ on 
$\overline{D}$ and if for all $\phi \in
C^{2}(\bR^{d})$, whenever $x \in \overline{D}$ is a point of local minimum of $v_{*} - \phi$,
\begin{eqnarray*}
- \cL \phi (x) - f (x,v^{*}(x), \nabla \phi(x) \sigma^*(x)) \geq 0 & \mbox{if} & x \in D; \\
\max \left( - \cL \phi (x) - f (x,v^{*}(x), \nabla \phi(x) \sigma^*(x)), v(x) - g(x) \right) \geq 0 & \mbox{if} & x \in \partial D.
\end{eqnarray*}
\item $v : \overline{D} \to \bR$ is called a \textbf{viscosity solution} of \eqref{eq:elliptic_pde} if it is both a viscosity sub- and
supersolution.
\end{itemize}
\end{definition}
If the boundary condition is singular, we adapt the preceding definition. 
\begin{definition}[Unbounded viscosity solution] \label{def:unboundedviscsolellip}
We say that $v$ is a viscosity solution of the PDE \eqref{eq:elliptic_pde}
with unbounded terminal data $g$ if $v$ is a viscosity solution on $D$ in the sense of Definition \ref{def:sol_visc} and if 
$$g(x) \leq \lim_{{x' \to x}\atop{x' \in D, \ x \in \partial D}} v_{*}(x') \leq \lim_{{x' \to x}\atop{x' \in D, \ x \in
\partial D}} v^{*}(x') \leq g(x).$$ 
\end{definition}
Remark that this definition implies that $v^{*} < + \infty$ and $v_{*} > - \infty$ on $D$. Under Conditions {\bf (A)}, {\bf (B)} and {\bf (D)} and if 
\begin{itemize}
\item $g : \partial D  \to \overline{\bR_+}$ is continuous,
\item $f$ is continuous on $\overline{D} \times \bR \times \bR^d$, 
\end{itemize}
using \cite[Theorem 5.74]{pard:rasc:14}, if we define $u^{(k)}(x) = Y^{(k),x}_0$,
then $u^{(k)}$ is continuous on $\overline{D}$ and it is a viscosity solution of the elliptic PDE \eqref{eq:elliptic_pde} with boundary data $g \wedge k$. Evoke that the sequence $Y^{(k),x}$ is converging to $\Yminx$. If 
$$u(x) \triangleq \Yminx_{0},$$
then $u$ is the supremum of continuous functions $u^{(k)}$, $u$ is non-negative and lower-semicontinuous on $\overline{D}$ and satisfies:
\begin{equation*}
\forall x \in \overline{D}, \quad u(x) \leq \frac{C}{\dist^{2(p-1)}(x)}.
\end{equation*}
Following the arguments of \cite{popi:07} with some adapted modifications, we have:
\begin{proposition} \label{prop:visc_sol_sing_PDE}
If Conditions {\bf (A)}, {\bf (B)} and {\bf (D)} hold, and if $f$ and $g$ are continuous functions, then the function $u$ defined by $u(x) = \Yminx_0$ is a viscosity solution of the elliptic PDE in the sense of Definition \ref{def:unboundedviscsolellip}. 

Moreover suppose that the matrix $\sigma\sigma^*$ is uniformly elliptic: there exists a constant $\alpha > 0$ such that 
\begin{equation}\label{eq:unif_ellip}
\forall x \in \bR^{d}, \quad \sigma \sigma^{*}(x) \geq \alpha \mbox{Id}.
\end{equation}
If the map $(x,y,z)\mapsto (b(x),\sigma (x),f(x,y,z)) $ is of class $C^1$, then $u$ belongs to $C^0(\overline{D},[0,+\infty])) \cap C^2(D,[0,+\infty))$. 
\end{proposition}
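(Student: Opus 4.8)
The plan is to realise $u$ as the increasing limit of the continuous functions $u^{(k)}(x)=Y^{(k),x}_0$ and to push the viscosity-solution property through the limit, treating the interior equation and the two boundary inequalities of Definition~\ref{def:unboundedviscsolellip} separately. By \cite[Theorem 5.74]{pard:rasc:14} each $u^{(k)}$ is continuous on $\overline D$, attains the datum $g\wedge k$ on $\partial D$, and is a viscosity solution of \eqref{eq:elliptic_pde} with boundary condition $g\wedge k$ in the sense of Definition~\ref{def:sol_visc}. Since the truncated terminal conditions increase to $\xi$, the comparison principle for BSDEs gives $u^{(k)}\nearrow u$ pointwise, so $u$ is non-negative and lower semicontinuous, and by \eqref{eq:upp_bound_rand_time} it obeys $u(x)\le C/\dist(x)^{2(p-1)}$; in particular $u_*>-\infty$ and $u^*<+\infty$ on $D$, as required for Definition~\ref{def:sol_visc} to be meaningful.

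First I would show $u$ is a viscosity solution on $D$ by the half-relaxed limit method of Barles--Perthame. Introduce
$$\bar u(x)=\limsup_{k\to\infty,\ y\to x}u^{(k)}(y),\qquad \underline u(x)=\liminf_{k\to\infty,\ y\to x}u^{(k)}(y).$$
Because the sequence is monotone increasing with continuous terms, one checks directly that $\underline u=u_*=u$ and $\bar u=u^*$. The standard stability theorem for viscosity solutions under half-relaxed limits then gives that $u^*$ is a viscosity subsolution and $u=u_*$ a viscosity supersolution of the interior equation $-\cL v-f(x,v,\nabla v\,\sigma^*)=0$ on $D$, so $u$ is a viscosity solution there.

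For the boundary, the lower inequality is easy: fixing $x_0\in\partial D$, domination $u\ge u^{(k)}$ and continuity of $u^{(k)}$ up to the boundary give $\liminf_{x'\to x_0,\,x'\in D}u_*(x')\ge(g\wedge k)(x_0)$, and $k\to\infty$ yields $\liminf u_*\ge g(x_0)$. The upper inequality $\limsup_{x'\to x_0,\,x'\in D}u^*(x')\le g(x_0)$ is the main obstacle; it is trivial when $x_0\in F_\infty$, so assume $x_0\in\partial D\setminus F_\infty$. Choosing a bounded open $U\ni x_0$ with $\overline U\cap F_\infty=\emptyset$, Lemma~\ref{lem:major_Ymin_with_g} provides an enlarged domain $D_U\supset D$ and the bound $u(x)\le C/\dist_U(x)^{2(p-1)}$ for $x\in U\cap D$; since $\dist_U(x_0)>0$, this makes $u$ \emph{locally bounded} near $x_0$. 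The remaining point is to upgrade this local boundedness to the sharp value $g(x_0)$. Following \cite{popi:07}, I would do this probabilistically, using the representation $u(x)=\Yminx_0$, the refined gradient estimate of Lemma~\ref{lem:major_Zmin_with_g}, continuity of $g$ near $x_0$ and of the flow $x\mapsto\Xi^x$, together with the continuity result already proved above; under the extra uniform-ellipticity hypothesis of the second part one may alternatively erect a bounded local supersolution (barrier) of \eqref{eq:elliptic_pde} on $U\cap D$ with value $g(x_0)$ at $x_0$ dominating $u^*$ on $\partial(U\cap D)$ and conclude by the comparison principle of \cite{cran:ishi:lion:92,barl:93}. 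This step, which controls the influence of the distant singular part of $g$ on $u$ near a finite boundary point, is where the real work lies.

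For the regularity statement, assume \eqref{eq:unif_ellip} and that $(x,y,z)\mapsto(b,\sigma,f)$ is $C^1$. On $D$, $u$ is a locally bounded viscosity solution of a uniformly elliptic equation with $C^1$ (hence locally H\"older) coefficients: Krylov--Safonov interior estimates first give $u\in C^{0,\alpha}_{loc}(D)$, whence $x\mapsto f(x,u(x),\nabla u(x)\sigma^*(x))$ becomes H\"older, and Schauder estimates bootstrap this to $u\in C^{2,\alpha}_{loc}(D)\subset C^2(D)$, so $u$ is a classical solution in $D$. Continuity up to $\overline D$ then follows from the boundary inequalities: at $x_0\in\partial D\setminus F_\infty$ the sandwich $g(x_0)\le\liminf u_*\le\limsup u^*\le g(x_0)$ forces $u(x')\to g(x_0)\in[0,\infty)$, while at $x_0\in F_\infty$ the lower bound forces $u(x')\to+\infty=g(x_0)$. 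Hence $u$ extends continuously to a map $\overline D\to[0,+\infty]$, giving $u\in C^0(\overline D,[0,+\infty])\cap C^2(D,[0,+\infty))$.
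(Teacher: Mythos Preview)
The paper does not actually give a proof of this proposition; it merely states ``Following the arguments of \cite{popi:07} with some adapted modifications'' and then states the result. Your outline---approximate by the viscosity solutions $u^{(k)}$ with bounded data $g\wedge k$, pass to the limit via Barles--Perthame stability for the interior equation, get the lower boundary inequality from monotonicity, and use Lemma~\ref{lem:major_Ymin_with_g} for local boundedness near finite boundary points---is exactly the strategy of \cite{popi:07} and matches what the paper is pointing to. Your acknowledgement that the upper boundary inequality at $x_0\in\partial D\setminus F_\infty$ is the substantive step is appropriate; the barrier/probabilistic sketch you give is in the right spirit.

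There is, however, a genuine gap in your regularity bootstrap. You write that Krylov--Safonov gives $u\in C^{0,\alpha}_{\mathrm{loc}}(D)$ and ``whence $x\mapsto f(x,u(x),\nabla u(x)\sigma^*(x))$ becomes H\"older''. But $C^{0,\alpha}$ regularity says nothing about $\nabla u$; you cannot feed $\nabla u$ into Schauder theory before you know it exists and is H\"older. Since the equation is semilinear with gradient dependence, the correct route is either (i) first obtain a local $C^{1,\alpha}$ estimate (e.g.\ via the quasilinear theory in \cite[Chapter~13]{gilb:trud:01}, using that the equation is linear in $D^2u$, uniformly elliptic, and Lipschitz in $\nabla u$), and only then apply Schauder; or (ii) work instead with the approximations $u^{(k)}$, which under uniform ellipticity and $C^1$ data are classical $C^{2,\alpha}_{\mathrm{loc}}$ solutions, and use the uniform interior bound from Lemma~\ref{lem:major_Ymin_with_g} to get $k$-independent local $C^{2,\alpha}$ estimates before passing to the limit. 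The latter is closer to what is done in \cite{popi:07}.
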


\section{Terminal condition $\xi_1$} \label{sect:term_cond_xi_1}

In this section we study terminal conditions of the form
\[
\xi_1 = \infty \cdot {\bm 1}_{\{\tau \leq S\}}
\]
where $\tau$ is another stopping time. We know from \cite[Section 2]{ahmadi2019backward} that when $S =T$ is deterministic
and $\tau$ has a bounded density around the terminal time $T$,
the minimal supersolution of BSDE \eqref{eq:bsde} with terminal condition $\xi_1$ satisfies
\[
\lim_{t\rightarrow T} \Ymin_T = \xi_1.
\]
Our goal is to prove similar continuity results when $S$ is a stopping time. For this we will consider two approaches:
the first is an extension of the approach taken in \cite[Section 2]{ahmadi2019backward}, the present section focuses on this.
We consider a new approach in the next subsection.

\subsection{First approach} \label{ssect:xi1_v1}

The approach of \cite[Section 2]{ahmadi2019backward} can be summarized as follows: 
\begin{enumerate}
\item Assume that $\tau$ has a bounded density around the terminal time $T$.
\item Let $Y^\infty$ be the minimal supersolution of \eqref{eq:bsde} on the interval $[\![0,S]\!]$ with terminal condition $Y_S = \infty$;
define the auxiliary terminal condition 
\[
\xi_1^{(\tau)} =  {\bm 1}_{\{\tau \le S\}} Y^\infty_\tau.
\]
\item Use the bounded density assumption and apriori upperbounds on $Y^\infty_\tau$ to prove
\begin{equation}\label{e:Lrhobound}
{\mathbb E}\left[ \left( \xi_1^{(\tau)}\right)^\varrho \right] < \infty
\end{equation}
for some $\varrho > 1$, in particular, $\xi_1^{(\tau)}$ is not a singular terminal condition.
\item Let $\widehat Y^{u}$ be the solution of a linear BSDE with terminal condition $\xi_1^{(\tau)}$
whose driver term is chosen to guarantee  $\Ymin \le \widehat Y^{u}$ (the superscript $u$ stands for upper bound). 
\item Derive the continuity of $\Ymin$ from that of $\widehat Y^{u}.$
\end{enumerate}
This argument requires a modification when the terminal time $S$ is random because 1) apriori upperbounds on supersolutions
with explicit expressions
are not in general available and 2) even when such bounds were available, assumptions only on the distribution of $\tau$
(such as the bounded density assumption in the first item of the list above) would not be sufficient because
the expectation in \eqref{e:Lrhobound} depends on the joint distribution of $\tau$ and $Y^{\min}_{\tau\wedge S}.$
In light of these observations, in the next theorem we take \eqref{e:Lrhobound} as our starting point. 
Proposition \ref{p:examplexi1} gives an example of a case where \eqref{e:Lrhobound} is satisfied.
Let us emphasize that \eqref{e:Lrhobound} implies that $\bP(\tau=\S)= 0$. Indeed, if not, then 
$${\mathbb E}\left[ \left( \xi_1^{(\tau)}\right)^\varrho \right]  \geq {\mathbb E}\left[{\bm 1}_{\{\tau = S\}}   \left(  Y^\infty_\S \right)^\varrho \right] = +\infty.$$
\begin{theorem} \label{t:firsttheoremxi1}
Assume that the stopping time $\S$ is solvable, such that Conditions {\bf (A)} and {\bf (B)} hold. Let $\tau$ be a stopping time such that there exists $\varrho$ large enough (depending on $\delta$ and $\delta^*$ in {\rm \ref{B3}}) such that \eqref{e:Lrhobound} holds. Then $\Ymin$ is continuous at $\S$, that is a.s. 
$$\lim_{t\to +\infty} \Ymin_{t\wedge \S} = \xi_1.$$
\end{theorem}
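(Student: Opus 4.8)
The plan is to follow the auxiliary-process strategy of \cite[Section 2]{ahmadi2019backward} summarized above, but taking \eqref{e:Lrhobound} as the starting hypothesis in place of a density bound on $\tau$. The decisive observation is that the continuity statement splits over the two events $\{\tau\le\S\}$ and $\{\tau>\S\}$. On $\{\tau\le\S\}$ the terminal value is $\xi_1=+\infty$, so the supersolution inequality $\liminf_{t\to+\infty}\Ymin_{t\wedge\S}\ge\xi_1$ already forces $\lim_{t\to+\infty}\Ymin_{t\wedge\S}=+\infty=\xi_1$, and nothing further is needed there. The whole difficulty is therefore to establish $\lim_{t\to+\infty}\Ymin_{t\wedge\S}=0$ on $\{\tau>\S\}$, and since $\Ymin\ge0$ it suffices to produce an upper bound that tends to $0$ on that event.

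First I would record two elementary facts. By the comparison principle (monotonicity of the truncated solutions in the terminal condition, together with $\xi_1\wedge k\le+\infty$) one has $Y^{(k)}\le\Ymin\le Y^{\infty}$ on $[\![0,\S]\!]$, where $Y^{(k)}$ is the solution with terminal value $\xi_1\wedge k=k\,{\bm 1}_{\{\tau\le\S\}}$. Evaluating at the stopping time $\tau\wedge\S$ and recalling $\xi_1^{(\tau)}={\bm 1}_{\{\tau\le\S\}}Y^{\infty}_\tau$ gives, on $\{\tau\le\S\}$, $Y^{(k)}_{\tau\wedge\S}=Y^{(k)}_\tau\le Y^{\infty}_\tau=\xi_1^{(\tau)}$, while on $\{\tau>\S\}$, $Y^{(k)}_{\tau\wedge\S}=Y^{(k)}_\S=\xi_1\wedge k=0=\xi_1^{(\tau)}$; hence $Y^{(k)}_{\tau\wedge\S}\le\xi_1^{(\tau)}$ almost surely. (Since \eqref{e:Lrhobound} forces $\bP(\tau=\S)=0$, the value $Y^{\infty}_\tau$ is a.s.\ finite on $\{\tau\le\S\}$.)

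Next I would construct the dominating process $\widehat Y^{u}$ as the solution, over the random horizon $[\![0,\tau\wedge\S]\!]$, of the non-singular BSDE with terminal value $\xi_1^{(\tau)}$ at time $\tau\wedge\S$ and the $y$-independent (hence Lipschitz, ``linear'') driver $(t,z,\psi)\mapsto f(t,0,z,\psi)$. Existence and uniqueness of $\widehat Y^{u}$ follow from Theorem \ref{thm:rand_time_exist_sol_BSDE} once \eqref{eq:int_cond_random_time} is checked for $\xi_1^{(\tau)}$: the $f^0$-term is controlled by \ref{B2} and \ref{B3}, while for the terminal-value term one writes $\bE[e^{r\rho(\tau\wedge\S)}(\xi_1^{(\tau)})^r]$ for a suitable $r>1$ and applies Hölder's inequality to split it into $(\bE[(\xi_1^{(\tau)})^\varrho])^{r/\varrho}$ times an exponential moment $\bE[\exp(\tfrac{r\rho\varrho}{\varrho-r}(\tau\wedge\S))]$; this last factor is finite as soon as $\varrho$ is large enough that $\tfrac{r\rho\varrho}{\varrho-r}\le\delta$, which is exactly the dependence of $\varrho$ on $\delta,\delta^*$ announced in the statement. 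Because $f(t,0,\cdot,\cdot)$ is Lipschitz in $(z,\psi)$ with $\kappa\ge-1$ (Assumptions \ref{A3} and \ref{A4}) and does not depend on $y$, the comparison principle applies: since $Y^{(k)}\ge0$, Assumption \ref{B1} gives $f(t,Y^{(k)}_t,Z^{(k)}_t,\psi^{(k)}_t)\le f(t,0,Z^{(k)}_t,\psi^{(k)}_t)$, and together with $Y^{(k)}_{\tau\wedge\S}\le\xi_1^{(\tau)}$ this yields $Y^{(k)}\le\widehat Y^{u}$ on $[\![0,\tau\wedge\S]\!]$ for every $k$. Letting $k\to+\infty$ gives $\Ymin\le\widehat Y^{u}$ on $[\![0,\tau\wedge\S]\!]$.

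Finally I would conclude by a sandwich argument on $\{\tau>\S\}$. There $\tau\wedge\S=\S$ and $\xi_1^{(\tau)}=0$; since $\widehat Y^{u}$ is a classical (non-singular) solution and $\bF$ is left-continuous at $\S$ (part of solvability), its trajectory has no jump at the terminal time, so $\lim_{t\to+\infty}\widehat Y^{u}_{t\wedge\S}=\xi_1^{(\tau)}=0$ on $\{\tau>\S\}$. Combining $0\le\Ymin_{t\wedge\S}\le\widehat Y^{u}_{t\wedge\S}$ with $\liminf_{t\to+\infty}\Ymin_{t\wedge\S}\ge0$ forces $\lim_{t\to+\infty}\Ymin_{t\wedge\S}=0=\xi_1$ on $\{\tau>\S\}$, which together with the $\{\tau\le\S\}$ case completes the proof. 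I expect the main obstacle to be the third step: pinning down how large $\varrho$ must be so that the Hölder split of $\bE[e^{r\rho(\tau\wedge\S)}(\xi_1^{(\tau)})^r]$ stays below the exponential-moment threshold $\delta$ of \ref{B3}, and verifying that the comparison principle genuinely applies in the jump setting (the sign condition $\kappa\ge-1$ from \ref{A3} is what makes it work). The existence of $\widehat Y^{u}$ and its continuity at the terminal time are then routine consequences of the non-singular theory.
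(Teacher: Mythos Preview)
Your proposal is correct and follows essentially the same strategy as the paper: build an auxiliary non-singular BSDE with terminal value $\xi_1^{(\tau)}$, dominate $Y^{(k)}$ by comparison, and pass to the limit on $\{\tau>\S\}$. The only cosmetic differences are that the paper takes the auxiliary driver to be $g(t,y,z,\psi)=\chi y+f(t,0,z,\psi)$ (linear in $y$) and invokes \ref{A1} rather than \ref{B1} for the driver inequality, places the terminal time of $\widehat Y^u$ at $\S$ rather than $\tau\wedge\S$, and also checks the second integrability condition \eqref{eq:int_cond_random_time_2} (which you omit but which is routine for a driver linear/constant in $y$); the concluding continuity of $\widehat Y^u$ on $\{\tau>\S\}$ is obtained in the paper by citing \cite[Lemma 3]{ahmadi2019backward} rather than the left-continuity argument you sketch.
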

\begin{proof}
We adopt the argument in \cite{ahmadi2019backward} given for deterministic terminal times (see the list above) to solvable terminal
times as follows. Since $\S$ is solvable, there exists a minimal supersolution $(Y^\infty,Z^\infty,U^\infty,M^\infty)$  to BSDE \eqref{eq:bsde} with terminal condition $+\infty$ at time $\S$. 

First, we consider the (linear in $y$) generator 
$$g(t,y,z,\psi)= \chi y + f( t ,0 ,z,\psi),$$
which satisfies all conditions {\bf (A)}, and the terminal value $\xi_1^{(\tau)}$ at the random time $\S$. Note that $\xi_1^{(\tau)}$ is $\cF_{\tau \wedge \S}$-measurable, thus $\cF_{\S}$-measurable. Let us check that \eqref{eq:int_cond_random_time} holds, namely for some $r > 1$ and $\rho > \nu(r)$
$$\bE \left[ e^{r\rho \S} |\xi_1^{(\tau)}|^r + \int_0^\S e^{r \rho t} |g(t,0,0,{\bm 0})|^r dt \right] < +\infty.$$
Note that $g(t,0,0,{\bm 0})=f^0_t$ and \ref{B2} holds. From the proof of \cite[Proposition 5]{krus:popi:15}, using \ref{B3}, there exists $r > 1$ and $\rho > \nu (r)$ such that $r\nu(r) < \delta$. Hence we can find $\gamma > 1$ such that $ \bE ( e^{r \rho \gamma \S}) < +\infty$. H\"older's inequality leads to:
$$\bE \left[ e^{r\rho \S} |\xi_1^{(\tau)}|^r \right] \leq \left( \bE e^{r \rho \gamma \S} \right)^{1/\gamma} \left( \bE |\xi_1^{(\tau)}|^{r \gamma*} \right)^{1/\gamma*}.$$
If $\varrho \geq r \gamma*$, then we deduce that $ \bE |\xi_1^{(\tau)}|^{r \gamma*} <+\infty$ and \eqref{eq:int_cond_random_time} is satisfied. 

Then we have to verify that \eqref{eq:int_cond_random_time_2} holds for $\xi_1^{(\tau)}$. This can be done by linearizing $g$ and using the same arguments as for \eqref{eq:int_cond_random_time}. 
Applying Theorem \ref{thm:rand_time_exist_sol_BSDE} leads to the existence and the uniqueness of the solution $(\widehat Y^{u},\widehat Z^{u},\widehat U^{u},\widehat M^{u})$. 

We next prove that $\widehat Y^{u}$ does serve as an upper bound on $Y^{(k)}$, the solution of the BSDE \eqref{eq:bsde} with terminal condition $\xi_1 \wedge k= k \mathbf 1_{\tau \le \S}$ at time $\S$: a.s. for any $t \geq 0$
$$Y^{(k)}_{t\wedge \tau\wedge \S} \le \widehat Y^{u}_{t\wedge \tau\wedge \S}.$$
Indeed by comparison principle, $Y^{(k)} \leq Y^\infty$. Hence a.s. $Y^{(k)}_{\tau \wedge \S} = Y^{(k)}_{\tau} \mathbf 1_{\tau \le \S} \leq Y^\infty_{\tau} \mathbf 1_{\tau \le \S} =\xi_1^{(\tau)}$.
Since $f(t,y,z,\psi)\leq g(t,y,z,\psi)$ by Condition \ref{A1}, we deduce the wanted result. 

We conclude using some linearization procedure (see \cite[Lemma 3]{ahmadi2019backward}) that a.s. on the $\cF_\S$-measurable set $\{\tau > \S\}$, that 
$$\lim_{t\to +\infty} \widehat Y^{u}_{t\wedge \S} =  0.$$
Thereby a.s. on the same set
$$0\leq \lim_{t\to +\infty} \Ymin_{t\wedge \S} \leq \lim_{t\to +\infty} \widehat Y^{u}_{t\wedge \S}= 0 =  \xi_1.$$
The continuity is proved. 
\end{proof}

%
%
%

Let us develop an example. Let us assume that $\S$ is the first exit time of $\Xi$ given by \eqref{eq:def_stop_time}, $\S = \S_D=\inf\{t\ge 0, \quad \Xi_t \notin D\}$, such that there exists a constant $C$ such that \eqref{eq:upp_bound_rand_time} holds:
\begin{equation*}
0 \leq  Y^\infty_{t\wedge \S} \leq \frac C{ \dist(\Xi_{t\wedge \S})^{2(p-1)}}.
\end{equation*}
We also suppose that $\sigma$ is uniformly elliptic (Equation \eqref{eq:unif_ellip}), such that by \cite{friedman}, for $\Xi_0 = x \in D$, $\Xi_t$ has a density $\phi(t,x,\cdot)$. 
Under this assumption,
to prove \eqref{e:Lrhobound} it suffices to prove
\begin{equation}\label{e:toprove}
{\mathbb E}\left[ {\bm 1}_{\{\tau \leq \S\}} 
\frac{1}{\text{dist}( \Xi_{\tau})^{\varrho 2(p-1)}} \right] < \infty,
\end{equation}
for some $\varrho > 1.$ 
Theorem \ref{t:firsttheoremxi1} above gives:
\[
\lim_{t\rightarrow \infty} \Ymin_{t\wedge \S} = \xi_1,
\]
{\em assuming} \eqref{e:toprove}.

%
%
%

The expectation in \eqref{e:toprove} depends on the joint distribution of $(\tau, S, \Xi_{\S})$. We are not aware of results
available in the current literature that would imply \eqref{e:toprove} under broad and general assumptions on these variables.
A basic case that can be treated with techniques that we know of is when $\tau$ is independent of $\Xi$ (and therefore of $S$). The next proposition proves
\eqref{e:toprove} under this setting.
\begin{proposition}\label{p:examplexi1}
Suppose that $\S$ is the first exit time of $\Xi$ given by \eqref{eq:def_stop_time}, that $\sigma$ is uniformly elliptic, and that $\tau$ is independent of $\Xi$. 
If $q > 1+2\varrho$, then
\begin{equation} \label{e:Lrhoboundprop}
{\mathbb E}\left[ {\bm 1}_{\{\tau \leq \S\}} 
\frac{1}{\dist( \Xi_{\tau})^{\varrho 2(p-1)}} \right] < \infty,
\end{equation}
\end{proposition}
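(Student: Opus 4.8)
The plan is to use the independence of $\tau$ and $\Xi$ to disintegrate the expectation in the value of $\tau$, and then to bound the resulting $t$-section uniformly in $t$. Since no quantitative information on $\tau$ beyond independence is available, a uniform-in-$t$ bound is exactly what one needs: the law $\mu_\tau$ of $\tau$ is a probability measure, so a uniform bound integrates to something finite. Write $a := 2\varrho(p-1) = 2\varrho/(q-1)$; the hypothesis $q > 1+2\varrho$ is precisely the statement $a<1$, and this is the one structural fact that makes the integral converge, because $\dist^{-a}$ is Lebesgue integrable over $D$ exactly when $a<1$. First I would record that $\bP(\tau=\S)=0$: independence of $\tau$ and $\Xi$ gives independence of $\tau$ and $\S$, and $\S$, being the exit time of a nondegenerate diffusion from a $C^2$ bounded domain, has no atoms; hence $\dist(\Xi_\tau)>0$ a.s.\ on $\{\tau\le\S\}$. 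Letting $p^D(t,x,\cdot)$ denote the sub-probability density of $\Xi_t$ killed at $\partial D$, so that $\bP(t<\S,\ \Xi_t\in A)=\int_A p^D(t,x,y)\,dy$ for $A\subset D$ with $x:=\Xi_0$, independence and Tonelli give
\[
\bE\left[ {\bm 1}_{\{\tau \le \S\}} \dist(\Xi_\tau)^{-a}\right] = \int_0^\infty F(t)\,\mu_\tau(dt), \qquad F(t) := \bE\left[{\bm 1}_{\{t < \S\}}\dist(\Xi_t)^{-a}\right],
\]
so it suffices to prove $\sup_{t>0}F(t)<\infty$.

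The key step is a collar estimate uniform in time: with $R:=\sup_{y\in D}\dist(y)$, there is a constant $C$ such that
\[
\bP\big(t<\S,\ \dist(\Xi_t)<r\big)\le C\,r \qquad \text{for all } t>0,\ 0<r\le R.
\]
Granting this, the layer-cake identity $\dist(\Xi_t)^{-a}=R^{-a}+a\int_0^R r^{-a-1}{\bm 1}_{\{\dist(\Xi_t)<r\}}\,dr$ yields
\[
F(t)\le R^{-a}+a\,C\int_0^R r^{-a}\,dr = R^{-a}+\frac{a\,C}{1-a}\,R^{1-a},
\]
which is finite and independent of $t$ precisely because $a<1$. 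This is where the exponent threshold enters.

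It remains to establish the collar estimate, which I would prove by separating the volume factor from the density. Since $\partial D$ is $C^2$, the inner collar $\{y\in D:\dist(y)<r\}$ has Lebesgue measure at most $C'r$ for $r\le R$. For the density I would invoke Aronson's Gaussian bounds for the uniformly elliptic operator with coefficients bounded on $\overline D$ (one may modify $b,\sigma$ outside $\overline D$ without changing the killed kernel). For small times the interiority of the start point is decisive: with $d_0:=\dist(\Xi_0)>0$, for $t\le 1$ and $r<d_0/2$ every $y$ with $\dist(y)<r$ satisfies $|x-y|\ge d_0/2$, so $p^D(t,x,y)\le C t^{-d/2}e^{-c|x-y|^2/t}\le C t^{-d/2}e^{-c d_0^2/(4t)}$, whose supremum over $t\le 1$ is finite; integrating over the collar gives a bound $\le C''r$, while for $r\ge d_0/2$ the trivial bound $\bP(\cdots)\le 1\le (2/d_0)r$ suffices. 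For large times I would use the semigroup property, $p^D(t,x,y)=\int_D p^D(t-1,x,z)\,p^D(1,z,y)\,dz\le \big(\sup_{z,y}p^D(1,z,y)\big)\int_D p^D(t-1,x,z)\,dz\le C_1$ for all $t\ge 1$, so that integrating over the collar again gives $\le C_1 C' r$. Combining the two regimes yields the collar estimate.

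The \emph{main obstacle} is obtaining the collar bound \emph{uniformly in $t$} with no control on $\tau$ beyond independence: the $t^{-d/2}$ blow-up of the heat kernel as $t\to 0$ is defeated only by the fact that $\Xi_0$ lies strictly inside $D$, and the large-$t$ regime requires a uniform sup-bound on the killed density, supplied here by the semigroup property and the bounded-time Gaussian estimate. Everything else is bookkeeping once one notes that $q>1+2\varrho$ is equivalent to $a=2\varrho/(q-1)<1$, the exact threshold for integrability of $\dist^{-a}$ in a neighborhood of $\partial D$.
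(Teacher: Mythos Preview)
Your argument is correct and follows essentially the same route as the paper: disintegrate in $\tau$ using independence, reduce to a uniform-in-$t$ bound on $\bE\big[{\bm 1}_{\{t<\S\}}\dist(\Xi_t)^{-a}\big]$, control the density near $\partial D$ by Aronson's estimate together with the fact that $\Xi_0$ lies strictly inside $D$, and integrate the $\dist^{-a}$ singularity over a collar using $a<1$. The packaging differs slightly: the paper works with the free transition density (bounding ${\bm 1}_{\{t<\S\}}\le{\bm 1}_{\{\Xi_t\in D\}}$) and applies the co-area formula to the level sets of $\dist$, whereas you use the killed density and the layer-cake formula via the collar probability; and the paper avoids your small-$t$/large-$t$ split by noting directly that $t\mapsto t^{-d/2}e^{-c/t}$ is bounded on $(0,\infty)$, so a single Aronson bound suffices. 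These are cosmetic differences, not substantive ones.
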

\begin{proof}
The equality $1/p + 1/q =1$ and $ q> 1+2\varrho $ imply $2(p-1)\varrho < 1$. 
Let us denote the distribution of $\tau$ by $F_\tau$. The expectation \eqref{e:toprove} can then be written as
\begin{align}
{\mathbb E}\left[ {\bm 1}_{\{\tau \leq \S\}}  \notag
\frac{1}{\text{dist}( \Xi_{\tau})^{\varrho 2(p-1)}} \right] 
&= \int_0^\infty {\mathbb E}\left[ {\bm 1}_{\{t \leq \S\}} \frac{1}{\text{dist}( \Xi_{t})^{\varrho 2(p-1)}} \right] dF_\tau(t).
\end{align}
Since $\S$ is the exit time of $\Xi$ from a smooth domain with uniformly elliptic diffusion matrix, we have:
\begin{align}
{\mathbb E}\left[ {\bm 1}_{\{\tau \leq \S\}}  \notag
\frac{1}{\text{dist}( \Xi_{\tau})^{\varrho 2(p-1)}} \right] 
&= \int_0^\infty {\mathbb E}\left[ {\bm 1}_{\{t < \S\}} \frac{1}{\text{dist}( \Xi_{t})^{\varrho 2(p-1)}} \right] dF_\tau(t)
\intertext{that $\{\Xi_t \in D\} \supset \{t < S\}$ implies}
&\le \int_0^\infty {\mathbb E}\left[ {\bm 1}_{\{\Xi_{t} \in D\}} \frac{1}{\text{dist}( \Xi_{t})^{\varrho 2(p-1)}} \right] dF_\tau(t). \label{e:decompose}
\end{align}
We next bound
\[
{\mathbb E}\left[ {\bm 1}_{\{\Xi_{t} \in D\}} \frac{1}{\text{dist}( \Xi_{t})^{\varrho 2(p-1)}} \right].
\]
For $\Xi_0 = x \in D$,
let $\phi(t,x,\cdot)$ be the density of $\Xi_t$. The expectation above then can be written as
\begin{equation}\label{e:intrepexp1}
{\mathbb E}\left[ {\bm 1}_{\{\Xi_{t} \in D\}} \frac{1}{\text{dist}( \Xi_{t})^{\varrho 2(p-1)}} \right] =
\int_D \phi(t,x,y) \frac{1}{\text{dist}(y)^{\varrho 2(p-1)}} dy.
\end{equation}
Define $D_\epsilon = \{x \in D: \dist(x) \le \epsilon \}$ for $\epsilon >0$;
by \cite[Lemma 14.16]{gilb:trud:01} there exists $\epsilon_1' > 0$ such that $\dist$ is $C^2$ in $D_{\epsilon_1'}$.
Therefore one can choose $ \epsilon_1 \in (0,\epsilon_1']$ so that  $\text{dist}$ is smooth on $D_{\epsilon_1}$
and $x \notin D_{\epsilon_1}.$ The continuity of $\dist$ implies
that $D_\epsilon$ is closed; $D_\epsilon$ is therefore compact 
since $D_\epsilon \subset D$ and $D$ is bounded.
This, the continuity of $\dist$ and $x \notin D_{\epsilon_1}$ imply
\begin{equation}\label{d:C1}
C_1 \doteq \inf_{y\in D_{\epsilon_1}} |x-y|  > 0.
\end{equation}

Since $b$ and $\sigma$ are Lipschitz continuous and since $\sigma$ is uniformly elliptic, from \cite[page 16]{friedman} we have the following Aronson's estimate on
$\phi(t,x,y):$
\[
\phi(t,x,y) \le \frac{C_2}{t^{d /2}} e^{-\frac{\lambda_0|y-x|^2}{4t}}.
\]
This and \eqref{d:C1} imply
\[
\phi(t,x,y) \le \frac{C_2}{t^{d/2}} e^{-\frac{\lambda_0 C_1^2}{4t}},
\]
for $y\in D_{\epsilon_1}.$
The right side of this inequality is continuous and bounded for
$t \in [0,\infty]$. Therefore
\begin{equation}\label{d:C3}
C_3 \doteq \sup_{t \in [0,\infty], y \in D_{\epsilon_1}}
\phi(t,x,y) \le 
 \sup_{t \in [0,\infty], y \in D_{\epsilon_1}}
 \frac{C_2}{t^{d /2}} e^{-\frac{\lambda_0 C_1^2}{4t}} < \infty.
\end{equation}
We now decompose \eqref{e:intrepexp1} into two integrals
over $D_{\epsilon_1}$ and $D\setminus D_{\epsilon_1}$:
\begin{align}
&{\mathbb E}\left[ {\bm 1}_{\{\Xi_{t} \in D\}} \frac{1}{\text{dist}( \Xi_{t})^{\varrho 2(p-1)}} \right] = \notag
\int_D \phi(t,x,y) \frac{1}{\text{dist}(y)^{\varrho 2(p-1)}} dy\\
&~~=
\int_{D\setminus D_{\epsilon_1}} \phi(t,x,y) \frac{1}{\text{dist}(y)^{\varrho 2(p-1)}} dy.\notag
+
\int_{D_{\epsilon_1}} \phi(t,x,y) \frac{1}{\text{dist}(y)^{\rho 2(p-1)}} dy \\
&\le \frac{1}{\epsilon_1^{2\varrho(p-1)}} + 
\int_{D_{\epsilon_1}} \phi(t,x,y) \frac{1}{\text{dist}(y)^{\varrho 2(p-1)}} dy.\label{e:boundforfixedt1}
\end{align}
the last inequality coming from: $\dist(y) > \epsilon_1$ for $y \in D\setminus D_{\epsilon_1}$. 

It remains to bound the last integral. For this note that $\dist$ is $C^2$ over $D_{\epsilon_1}.$
Furthermore, $\partial D$ is the $0$-level curve of $\text{dist}$, in particular, for $y\in \partial D$, the gradient 
$\nabla \dist(y)$ is normal to $\partial D.$
$\partial D$ is a $C^1$ surface, with nonvanishing normal at everypoint. It follows from these and the definition
of $\text{dist}$ that $\nabla \text{dist}$ satisfies $|\nabla \text{dist}(y)| =1$ for $y\in \partial D.$
Now define 
\[
E_\epsilon = \{y \in D: \text{dist}(y) > \epsilon \} = D \setminus D_\eps.
\]
That $\text{dist}$ is $C^2( D_{\epsilon_1})$ implies that $\partial D_{\epsilon_1}$ is a $C^2$ bounded surface and that
the function
\[
A(\epsilon) = \text{Area}(\partial E_\epsilon)
\]
is $C^1$ over the interval $[0,\epsilon_1]$. In particular, it is continuous and satisfies
\begin{equation}\label{d:C4}
C_4 \doteq \sup_{\epsilon \in [0,\epsilon_1]} A(\epsilon) < \infty.
\end{equation}
This and
the definition of $\text{dist}$ imply $|\nabla\text{dist}(y)|=1$ for $y \in \partial D_\epsilon$ for $\epsilon \le \epsilon_1.$
We are now in a setting where we can apply the co-area formula \cite[Theorem 5, page 713]{evans2010partial}, which gives
\begin{align*}
\int_{D_{\epsilon_1}} \phi(t,x,y) \frac{1}{\text{dist}(y)^{\varrho 2(p-1)}} dy
&=\int_{0}^{\epsilon_1} \left(\int_{\partial E_\epsilon} \phi(t,x,y) dS\right)\frac{1}{\epsilon^{\varrho 2(p-1)}}d\epsilon
\intertext{ $\partial E_\epsilon \subset D_{\epsilon_1}$ and \eqref{d:C3} imply}
&\le \int_{0}^{\epsilon_1} \left(\int_{\partial E_\epsilon} C_3 dS\right)\frac{1}{\epsilon^{\varrho 2(p-1)}}d\epsilon
\intertext{This and \eqref{d:C4} give}
&\le C_3C_4\int_{0}^{\epsilon_1} \frac{1}{\epsilon^{\varrho 2(p-1)}}d\epsilon.
\end{align*}
Recall that $\varrho 2(p-1) < 1$. This and the last line imply
\begin{equation}\label{e:boundforfixedt2}
\int_{D_{\epsilon_1}} \phi(t,x,y) \frac{1}{\text{dist}(y)^{\varrho 2(p-1)}} dy < C_5,
\end{equation}
where
\[
C_5 \doteq C_3 C_4 \int_{0}^{\epsilon_1} \frac{1}{\epsilon^{\varrho 2(p-1)}}d\epsilon < \infty.
\]
The bound \eqref{e:boundforfixedt2} we have just derived and \eqref{e:boundforfixedt1} imply
\[
{\mathbb E}\left[ {\bm 1}_{\{\Xi_{t} \in D\}} \frac{1}{\text{dist}( \Xi_{t})^{\varrho 2(p-1)}} \right]
\le 
\frac{1}{\epsilon_1^{\varrho 2(p-1)}} +  C_5.
\]
This and \eqref{e:decompose} imply \eqref{e:Lrhoboundprop}.
\end{proof}

For example, if $f$ only depends on $y$ and is non increasing ($\chi=0$), then it is sufficient to have $q > 3$.

\subsection{A new argument for $\xi_1$}\label{ss:newxi1}

In the rest of the paper, to clearly state the ideas and for a less technical presentation
we will restrict our attention to the Brownian framework, i.e., we assume that $\bF = \bF^W$ is the filtration generated by
the $d$-dimensional Brownian motion $W$. Therefore \eqref{eq:bsde} reduces to \eqref{eq:bsdebrownian}, that is:
\begin{equation*}
dY_t  = - f(t,Y_t,Z_t) dt + Z_t dW_t.
\end{equation*}
The continuity arguments in Section \ref{ssect:xi1_v1} above and in \cite[Section 2]{ahmadi2019backward} 
use the solution of a linear auxiliary BSDE as an upper bound to the minimal supersolution. 
In this section we would like to explore a new upper bound that is based directly
on the original nonlinear BSDE. As will be seen, whenever applicable, 
this is more natural and leads to less strict conditions on the parameter $q$ of Condition \ref{B1}.
We assume $\tau$ and $S$ to be solvable in the sense of Definition \ref{d:solvable_rtt}.  
Let $Y^{S,\infty}$ and $Y^{\tau,\infty}$ denote the $\infty$-supersolutions\footnote{When we refer to $Y$ as the solution, 
we mean the first component $Y$ of a solution $(Y,Z)$.} corresponding to $\tau$ and $S$.
The main idea of the present section as compared
to that of Section \ref{ssect:xi1_v1} and \cite[Section 2]{ahmadi2019backward} is the following: we replace the upper bound process $\widehat Y^{u}$ of the proof of Theorem \ref{t:firsttheoremxi1} with $Y^{\tau,\infty}$.

\begin{theorem} \label{t:xi1new}
Suppose $\tau$ and $S$ are solvable in the sense of Definition \ref{d:solvable_rtt}.
Then a supersolution $\Ymin$ of \eqref{eq:bsdebrownian} with terminal condition $\Ymin_S =+\infty \cdot {\bm 1}_{\{\tau \le S\}}$
exists and 
\begin{equation}\label{e:continuityYminS}
\lim_{t\rightarrow \infty} \Ymin_{t\wedge S} =+\infty \cdot {\bm 1}_{\{\tau \le S\}} =\xi_1.
\end{equation}
\end{theorem}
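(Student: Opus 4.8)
The plan is to realize $\Ymin$ as the increasing limit of the solutions $Y^{(k)}$ of \eqref{eq:bsdebrownian} with the bounded terminal data $\xi_1\wedge k=k\,\1_{\{\tau\le\S\}}$ imposed at time $\S$, and then to trap this limit between two $\infty$-supersolutions. Since $\S$ is solvable, Lemma \ref{lem:solvability_min_sol} provides $Y^{S,\infty}$, and the comparison principle gives $Y^{(k)}\le Y^{S,\infty}<+\infty$ on $[\![0,\S[\![$; hence $\Ymin:=\lim_k Y^{(k)}$ is well defined and finite on $[\![0,\S[\![$. This is the candidate for the minimal supersolution.

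For the behaviour on $\{\tau\le\S\}$ I would use that, in the Brownian filtration, each $Y^{(k)}$ is a genuine continuous solution with $Y^{(k)}_\S=k$ on $\{\tau\le\S\}$, so by Corollary \ref{coro:seq_rtt_bounded_Y} one has $\lim_{t\to\infty}Y^{(k)}_{t\wedge\S}=k$ there. Passing to $\liminf_{t\to\infty}\Ymin_{t\wedge\S}\ge k$ and then letting $k\to\infty$ gives $\lim_{t\to\infty}\Ymin_{t\wedge\S}=+\infty$ a.s.\ on $\{\tau\le\S\}$. Together with the trivial bound on $\{\tau>\S\}$ this shows $\liminf_{t\to\infty}\Ymin_{t\wedge\S}\ge\xi_1$, so that $\Ymin$ is indeed a supersolution, and simultaneously settles the blow-up half of \eqref{e:continuityYminS}. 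Note this step uses nothing about $\tau$ beyond $\{\tau\le\S\}\in\cF_\S$.

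The core is the upper bound. Since $\tau$ is solvable, $Y^{\tau,\infty}$ exists and blows up at $\tau$, and I would first establish
$$Y^{(k)}_{t\wedge\tau\wedge\S}\le Y^{\tau,\infty}_{t\wedge\tau\wedge\S},\qquad t\ge 0,$$
by comparison on $[\![0,\tau\wedge\S]\!]$: both sides satisfy \eqref{eq:bsdebrownian} with the same driver, and at the terminal time $\tau\wedge\S$ the value of the solution $Y^{(k)}$ is dominated by that of the supersolution $Y^{\tau,\infty}$ (on $\{\tau\le\S\}$ the latter is $+\infty$, while on $\{\tau>\S\}$ one has $Y^{(k)}_\S=0\le Y^{\tau,\infty}_\S$). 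Letting $k\to\infty$ yields $\Ymin\le Y^{\tau,\infty}$ on $[\![0,\tau\wedge\S]\!]$. Making this comparison rigorous across the singularity of $Y^{\tau,\infty}$ requires a truncation $Y^{\tau,(j)}\nearrow Y^{\tau,\infty}$ combined with a localization of $\{Y^{(k)}\le j\}$, which is a routine but slightly technical point. On $\{\tau>\S\}$ we have $\tau\wedge\S=\S<\tau$, so $Y^{\tau,\infty}$ stays finite and continuous up to $\S$; invoking the localizing times $\S_n^\tau\nearrow\tau$ of Lemma \ref{lem:sq_stop_times_bounded_Y} along which $Y^{\tau,\infty}\le n$, one sees that $\Ymin$ is bounded on $[\![0,\S]\!]$ on $\{\tau>\S\}$, i.e.\ it is \emph{non-singular} there.

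The remaining, and in my view hardest, step is to upgrade this non-singularity into $\lim_{t\to\infty}\Ymin_{t\wedge\S}=0$ on $\{\tau>\S\}$. The difficulty is exactly that the domination by $Y^{\tau,\infty}$ only gives a finite, \emph{not vanishing}, bound at $\S$, so the limit $0$ cannot be read off from a naive squeeze as in Theorem \ref{t:firsttheoremxi1}; this is the obstruction the auxiliary linear process was designed to bypass in \cite{ahmadi2019backward}. Instead I would argue that, being bounded by $Y^{\tau,\infty}\le n$ on each $[\![0,\S_n^\tau\wedge\S]\!]$, the pair $(\Ymin,\Zmin)$ is an honest $L^2$-solution of \eqref{eq:bsdebrownian} there with terminal value $0$ on $\{\tau>\S\}$; writing the equation for $Y^{(k)}$ on $[\![0,\S_n^\tau\wedge\S]\!]$, using the uniform-in-$k$ bound to obtain uniform estimates on $Z^{(k)}$, passing to the limit in $k$ by stability of BSDEs, and then letting $n\to\infty$, path-continuity of $L^2$-solutions in the Brownian filtration forces the left limit at $\S$ to agree with the terminal value $0$ on $\{\tau>\S\}$. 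The delicate point to control is precisely the exchange of the limits $k\to\infty$ and $t\uparrow\S$ on the $\cF_\S$-measurable event $\{\tau>\S\}$. Combining the blow-up on $\{\tau\le\S\}$ with this vanishing on $\{\tau>\S\}$ gives \eqref{e:continuityYminS}.
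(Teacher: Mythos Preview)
Your approach is essentially the paper's: dominate $Y^{(k)}$ by $Y^{\tau,\infty}$ via a double truncation (the paper compares the solution with terminal value $(Y^{(k)}_\tau\wedge L_1)\1_{\{\tau\le S\}}$ at time $\tau\wedge S$ against $Y^{\tau,L_1}$, then lets $L_1\to\infty$), and then exploit the resulting bound $\Ymin\le n$ on $[\![0,\tau_n\wedge S]\!]$. The step you flag as hardest is in fact direct and requires no delicate exchange of limits: the uniform-in-$k$ bound by $n$ on $[\![0,\tau_n\wedge S]\!]$ makes $\Ymin$ a genuine continuous solution there, and on the event $\{\tau_n>S\}$ (this, not $\{\tau>S\}$, is the set you should work on for fixed $n$) this interval is $[\![0,S]\!]$ with terminal value $0$, so continuity immediately gives $\lim_{t\to\infty}\Ymin_{t\wedge S}=0$; the conclusion on $\{\tau>S\}$ then follows from $\bigcup_n\{\tau_n>S\}=\{\tau>S\}$.
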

\begin{proof}

By assumption there exists a supersolution $Y^{S,\infty}$ to the BSDE with terminal condition $Y_S = \infty$
and this supersolution is the limit of processes $Y^{(L)}$ which are solutions of the same BSDE with
terminal condition $Y_S = L.$ Let $\xi \ge 0$ be an arbitrary terminal condition and let $Y^{L,\xi}$ be the solution
of \eqref{eq:bsdebrownian} with terminal condition $Y_S = \xi \wedge L.$ Comparison with
$Y^{(L)}$ imply that $\lim_{L \nearrow \infty} Y^{L,\xi}$ defines the minimal supersolution $\Ymin$ to \eqref{eq:bsdebrownian} with terminal
condition $\xi.$  By assumption $\tau$ is solvable. Therefore, there exists a process $Y^{\tau,\infty}$ that is a supersolution
to BSDE \eqref{eq:bsdebrownian} with terminal condition $Y_\tau = \infty.$ Let $\tau_n$ be the sequence of increasing
stopping times in Definition \ref{def:sol_sing_BSDE_rtt} associated with this supersolution and let $Y^{\tau,\infty,L}$ be the
sequence of solutions of \eqref{eq:bsdebrownian} with terminal condition $Y_\tau = L$; by definition 
\[Y^{\tau,\infty} = \lim_{L \nearrow \infty} Y^{\tau,\infty,L}.\]
By Corollary \ref{coro:seq_rtt_bounded_Y}, $Y^{\tau,\infty}$ is bounded by $n$ in the interval $[\![0,\tau_n]\!].$

Similarly, let $Y^{S,\xi_1,L}$ be the sequence of solutions of BSDE \eqref{eq:bsdebrownian} with terminal condition
$Y_S = \xi_1 \wedge L = L \cdot {\bm 1}_{\{\tau \le S \}.}.$
We will now prove 
\begin{equation}\label{e:comparisonintaunS}
Y^{S,\xi_1,L}_t \le Y^{\tau,\infty}_t, \quad t \le \tau_n \wedge S.
\end{equation}
To prove this consider, for $L_1 > 0$ the solution $Y^{S,\xi_1,L,L_1}$ of BSDE \eqref{eq:bsdebrownian}
with terminal condition $Y_{\tau \wedge S } = \left( Y^{S,\xi_1,L}_{\tau} {\bm 1}_{\{\tau \leq S\}} \right) \wedge L_1 = (Y^{S,\xi_1,L}_{\tau} \wedge L_1)  {\bm 1}_{\{\tau \leq S\}}$, which is $\cF_{\tau \wedge \S}$-measurable. 
We will compare this process with $Y^{\tau,L_1}$, the solution of \eqref{eq:bsdebrownian} with terminal
condition $Y_{\tau} = L_1$, on the time interval $[\![0, \tau \wedge S]\!].$
By its definition,
the terminal value of $Y^{S,\xi_1,L,L_1}$ at time $\tau \wedge S$ equals,
\begin{align}\label{e:termcondYtauxi1LL1}
Y^{S,\xi_1,L,L_1}_{\tau \wedge S} &= (Y^{S,\xi_1,L}_{\tau} \wedge L_1)  {\bm 1}_{\{\tau \leq S\}} \notag 
\intertext{which is bounded by}
&\le L_1 {\bm 1}{\{\tau \leq S \}}.
\end{align}
Again by definition
$$Y^{\tau,L_1}_{\tau \wedge S}  = Y^{\tau,L_1}_{\tau} {\bm 1}_{\{\tau \leq S \}} + Y^{\tau,L_1}_S {\bm 1}_{\{S < \tau \}} = L_1 +  Y^{\tau,L_1}_S {\bm 1}_{\{S <  \tau \}}.$$
It follows from this $Y^{\tau,L_1} \ge 0$ and \eqref{e:termcondYtauxi1LL1} that
\begin{equation}\label{e:comparexi1LL1L1Sterm}
Y^{S,\xi_1,L,L_1}_{\tau \wedge S} \le Y^{\tau,L_1}_{\tau \wedge S}.
\end{equation}
The processes 
$Y^{S,\xi_1,L,L_1}$ and  $Y^{\tau,L_1}$ are solutions of BSDE \eqref{eq:bsdebrownian} on
the interval $[\![0,\tau \wedge S]\!]$ (in the sense of Theorem \ref{thm:rand_time_exist_sol_BSDE}). This, $\tau_n \wedge S \le \tau \wedge S$, 
the inequality \eqref{e:comparexi1LL1L1Sterm} and the comparison principle for BSDE
imply
\[
Y^{S,\xi_1,L,L_1}_{t} \le Y^{\tau,L_1}_{t}, \text{ for } t \in [\![0,\tau_n \wedge S]\!].
\]
Letting $L_1 \nearrow \infty$ gives \eqref{e:comparisonintaunS}.
Recall that $Y^{\tau,\infty}$ is bounded by $n$ in the interval $[\![0,\tau_n]\!].$ This and \eqref{e:comparisonintaunS}
implies the same bound for $Y^{S,\xi_1,L}$. Letting $L \nearrow \infty$ we discover that the process
$Y^{S,\xi_1}$ is a solution of \eqref{eq:bsdebrownian} in the interval $[\![0,\tau_n\wedge S]\!]$ with
terminal condition 
\begin{align*}
&\xi_1 {\bm 1}_{\{S < \tau_n\}} + Y^{S,\xi_1}_{\tau_n} {\bm 1}_{\{\tau_n \le S \}} = Y^{S,\xi_1}_{\tau_n} {\bm 1}_{\{\tau_n \le S \}}  \le n.
\end{align*}
 In particular, $Y^{S,\xi_1}$ is continuous on $[\![0,\tau_n\wedge S]\!]$ and satisfies
\[
\lim_{t\rightarrow \infty} Y^{S,\xi_1}_{t\wedge \tau_n\wedge \S} = Y^{S,\xi_1}_{\tau_n\wedge \S} -\Delta Y^{S,\xi_1}_{\tau_n\wedge \S}.
\]
Now over the event $\{\tau_n > \S \}$, $Y^{S,\xi_1}_{\tau_n\wedge \S}=0$, and since the filtration is continuous at time $\S$, there is no jump at time $\S$. Thus over the event $\{\tau_n > \S \}$
\[
\lim_{t\rightarrow \infty} Y^{S,\xi_1}_{t\wedge \tau_n\wedge \S} = 0.
\]
 Since $Y^{S,\xi_1}= \Ymin$, this and
\[
\bigcup_{n =1}^\infty \{\tau_n > \S  \} = \{\tau > S \}
\]
implies  \eqref{e:continuityYminS}.
\end{proof}

\subsection{An example in one space dimension}\label{ss:anexamplexi1}

In this subsection we go back to the setup studied in \cite[Section 2]{krus:popi:seze:18}: the driver is deterministic and only a function of $y$:
\[
f(y) = -y|y|^{q-1},
\]
the terminal time $S$ is deterministic $T$ and the terminal condition is
\begin{equation}\label{e:termcond1d}
Y_T = \infty \cdot {\bm 1}_{\{\tau \le T \}}
\end{equation}
where $\tau$ is the first exit time of $W$ from the interval $(0,L).$ Note that since $f$ is deterministic and since the terminal conditions only depend on $W$, the solution $(Y,Z,U,M)$ of BSDE \eqref{eq:bsde} is reduced to $(Y,Z,\mathbf{0},0)$ and the BSDE can be reduced to:
\begin{equation}\label{eq:bsdebrownian1d}
Y_s = Y_t + \int_s^t f(Y_r) dr  + \int_s^t Z_rdW_r. 
\end{equation}
Theorem 2.1 of \cite{krus:popi:seze:18} states
that for $q > 2$ the minimal supersolution of the BSDE \eqref{eq:bsdebrownian1d} with terminal condition \eqref{e:termcond1d} is continuous at time $T$.
Let $y_t$ denote the solution of $\frac{dy}{dt} = -f(y)$ on the interval $[0,T]$ with terminal
value $y_T = \infty$, i.e., 
\begin{equation}\label{e:trivialsol}
y_t \doteq ((q-1)(T-t))^{1-p},~~~ t < T,~~~ 1/p + 1/q = 1.
\end{equation}
The proof of \cite[Theorem 2.1]{krus:popi:seze:18} is based on the following
integrability result:
\begin{equation}\label{e:finitexp}
{\mathbb E}[y_\tau {\bm 1}_{\{\tau \leq T \}}] = {\mathbb E}[y_\tau {\bm 1}_{\{\tau < T \}}] < \infty.
\end{equation}
As in the proof of Theorem \ref{t:firsttheoremxi1}, \cite{krus:popi:seze:18} constructs a linear process that is continuous at time $T$
to find a continuous upperbound on the minimal supersolution (which implies the continuity of the minimal supersolution); 
the bound \eqref{e:finitexp} ensures that the upper bound linear process is well defined.
The bound \eqref{e:finitexp} requires $q > 2$ and that is the reason why this was assumed in \cite{krus:popi:seze:18}
in its treatment of the terminal condition \eqref{e:termcond1d}.
We will now derive the same continuity result under the assumption $q >1$ using Theorem \ref{t:xi1new} above.

To apply Theorem \ref{t:xi1new} to the present setup we need $T$ and $\tau$ to be solvable. This essentially means
that the BSDE has  weak supersolutions with terminal value $\infty$ at these terminal times.
The weak supersolution for terminal time $T$ is the deterministic process $t \mapsto y_t$.
That $\tau$ is solvable can be derived
from \eqref{eq:upp_bound_rand_time}. Instead of invoking this general result, in the following lemma we will make use of the simple
nature of $f$ and $W$ to explicitly construct the supersolution $Y^{\tau,\infty}$ with terminal condition $Y_\tau = \infty.$
Following \cite[page 307]{zaitsev2002handbook} we will use
\begin{equation}\label{d:bmx}
\bm{x}(v,v_l) \doteq v_l^{1-\frac{q+1}{2}}\left(\frac{q+1}{4}\right)^{1/2} \int_{1}^{v/v_l} \left(u^{q+1}-1\right)^{-1/2}du.
\end{equation}
to construct solutions to the ODE
\begin{equation}\label{e:ode}
\frac{1}{2} \dfrac{\mathrm{d}^2 V}{\mathrm{d}x^2} - V^q = 0.
\end{equation}
The function ${\bm x}$ is strictly increasing in $v$, furthermore, $q >1$ implies $\bm{x}(\infty,v_l) < \infty$.
Define
\[
{\bm L}(v_l) = {\bm x}(\infty,v_l).
\]
Let $\bm{x}^{-1}(\cdot,v_l)$ denote the inverse of $\bm{x}(\cdot,v_l).$
Now define
\[
{\bm v} (x,v_l) \doteq \bm{x}^{-1}(|x-L/2|,v_l).
\]
\begin{lemma}\label{l:bmx}
On the interval 
$[L/2 -{\bm L}(v_l), L/2 + {\bm L}(v_l)]$, ${\bm v} (\cdot,v_l)$ satisfies \eqref{e:ode} with boundary conditions $\infty$ on both sides.
\end{lemma}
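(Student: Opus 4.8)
The plan is to recognize \eqref{e:ode} as an autonomous second-order ODE and solve it by the standard energy/quadrature method, then identify the resulting quadrature with the definition \eqref{d:bmx} of $\bm{x}$. Writing \eqref{e:ode} as $V'' = 2V^q$ and multiplying by $V'$, I would integrate once to obtain the first integral $(V')^2 = \frac{4}{q+1}(V^{q+1} - c)$ for a constant $c$. Since I am seeking a solution symmetric about $x = L/2$ that attains its minimum value $v_l$ there with vanishing derivative, the constant is forced to be $c = v_l^{q+1}$, giving $(V')^2 = \frac{4}{q+1}(V^{q+1}-v_l^{q+1})$ on the right half $x \ge L/2$, where $V' \ge 0$.

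Separating variables and integrating from $L/2$ (where $V = v_l$) to $x$ (where $V = v$), the substitution $V = v_l u$ turns the left-hand integral into exactly the expression appearing in \eqref{d:bmx}; that is, the relation becomes $\bm{x}(v, v_l) = x - L/2$. Inverting (legitimate since $\bm{x}(\cdot, v_l)$ is continuous and strictly increasing, its integrand being positive) this reads $V(x) = \bm{x}^{-1}(x - L/2, v_l)$ for $x \ge L/2$, and by the reflection symmetry $x \mapsto L - x$ of the problem, $V(x) = \bm{x}^{-1}(L/2 - x, v_l)$ for $x \le L/2$. The two halves combine into ${\bm v}(x,v_l) = \bm{x}^{-1}(|x - L/2|,v_l)$, the claimed formula. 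To confirm that ${\bm v}$ is a genuine classical solution away from the centre I would differentiate the identity $\bm{x}({\bm v}(x),v_l) = |x-L/2|$: a short computation gives $\partial_v \bm{x}(v,v_l) = (\frac{q+1}{4})^{1/2}(v^{q+1}-v_l^{q+1})^{-1/2}$, hence $({\bm v}')^2 = \frac{4}{q+1}({\bm v}^{q+1}-v_l^{q+1})$, and differentiating once more recovers ${\bm v}'' = 2{\bm v}^q$ wherever ${\bm v}' \ne 0$.

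For the boundary conditions I would observe that as $v \to \infty$ the integrand $(u^{q+1}-1)^{-1/2}$ decays like $u^{-(q+1)/2}$, and since $q > 1$ yields $(q+1)/2 > 1$, the integral converges; hence $\bm{x}(\infty,v_l) = {\bm L}(v_l) < \infty$. Consequently ${\bm v}(x,v_l) \to \infty$ precisely as $|x-L/2| \to {\bm L}(v_l)$, i.e.\ at the endpoints $x = L/2 \pm {\bm L}(v_l)$, which is the stated blow-up on both sides. The integrability of $(u^{q+1}-1)^{-1/2}$ near $u = 1$, where it behaves like $((q+1)(u-1))^{-1/2}$, guarantees at the same time that $\bm{x}(\cdot,v_l)$ is well defined and continuous down to $v = v_l$.

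The step I expect to be the main obstacle is justifying the $C^2$ regularity of ${\bm v}$ at the centre $x = L/2$, where ${\bm v}' = 0$ and the separation-of-variables argument degenerates. The cleanest remedy is to sidestep the explicit inversion there: I would invoke Picard's theorem for the initial value problem $w'' = 2w^q$, $w(L/2) = v_l$, $w'(L/2)=0$ (the nonlinearity $w \mapsto 2w^q$ being locally Lipschitz for $w > 0$), note that its unique local solution is smooth and, by uniqueness applied to $x \mapsto w(L-x)$, symmetric about $L/2$, and finally check that $w$ satisfies the first integral on $x > L/2$ and hence coincides with ${\bm v}$. This identifies ${\bm v}$ with the smooth IVP solution and removes any doubt about regularity at the centre.
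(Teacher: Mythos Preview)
Your proposal is correct and is precisely the ``direct calculation using the definition \eqref{d:bmx} of $\bm{x}$'' that the paper invokes without details; you have simply spelled out the energy/quadrature computation and added the regularity check at $x=L/2$, which the paper omits entirely. There is no meaningful difference in approach.
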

\begin{proof} Direct calculation using the definition \eqref{d:bmx} of ${\bm x}.$
\end{proof}
To construct a supersolution of \eqref{eq:bsdebrownian1d}, we want to solve \eqref{e:ode} in the interval $[0,L]$
with $\infty$ terminal conditions. Note that ${\bm L}(0) = \infty$ and ${\bm L}(\infty) =0$ and ${\bm L}$ is a decreasing smooth function.
It follows that there is a unique $v^*$ such that ${\bm L}(v^*) = L/2.$ 
Then for $v_l = v^*$, ${\bm v}(x,v^*)$ solves \eqref{e:ode} in the interval $[0,L]$ with $\infty$ terminal conditions.
For our argument we also need solutions to \eqref{e:ode} in the time interval $[0,L]$ with boundary condition $n$ on both sides.
For this purpose, the next lemma constructs a sequence $ 0 < v_n \nearrow v^*$ such that ${\bm x}(n,v_n) = L/2.$
\begin{lemma}
There exists a sequence $ 0 < v_n \nearrow v^*$ such that ${\bm x}(n,v_n) = L/2.$
\end{lemma}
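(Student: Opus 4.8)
The plan is to analyze the map $v_l \mapsto {\bm x}(n,v_l)$ for each fixed $n$ and to exploit its monotonicity in both arguments. A convenient first step is to simplify ${\bm x}$ via the substitution $s = u\,v_l$ in \eqref{d:bmx}; since the two powers of $v_l$ cancel (the prefactor contributes $v_l^{(1-q)/2}$ and the transformed integral contributes $v_l^{(q-1)/2}$), this yields, with $c\doteq\left(\tfrac{q+1}{4}\right)^{1/2}$,
\[
{\bm x}(n,v_l) = c\int_{v_l}^{n}\left(s^{q+1}-v_l^{q+1}\right)^{-1/2}\,ds,\qquad 0 < v_l \le n,
\]
and, letting $n\to\infty$, ${\bm L}(v_l)=c\int_{v_l}^{\infty}(s^{q+1}-v_l^{q+1})^{-1/2}\,ds$. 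From this form I would first record that for fixed $n$ the map $v_l\mapsto{\bm x}(n,v_l)$ is continuous on $(0,n]$, that ${\bm x}(n,n)=0$, and that ${\bm x}(n,v_l)\to+\infty$ as $v_l\to 0^+$ (the integrand dominates $s^{-(q+1)/2}$, whose integral near $0$ diverges precisely because $q>1$). Strict monotonicity in $v_l$ is cleanest from the original expression \eqref{d:bmx}: differentiating produces two manifestly negative terms (the factor $v_l^{(1-q)/2}$ is decreasing as $q>1$, and the upper endpoint $n/v_l$ is decreasing), so $\partial_{v_l}{\bm x}(n,v_l)<0$. Hence ${\bm x}(n,\cdot)$ is a continuous strictly decreasing bijection of $(0,n]$ onto $[0,\infty)$, and there is a unique $v_n\in(0,n)$ with ${\bm x}(n,v_n)=L/2$.

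Next I would show that $(v_n)$ is strictly increasing and bounded above by $v^*$. For fixed $v_l$, the map $n\mapsto{\bm x}(n,v_l)$ is strictly increasing, since enlarging $n$ enlarges the interval of integration of a positive integrand. Therefore ${\bm x}(n+1,v_n)>{\bm x}(n,v_n)=L/2={\bm x}(n+1,v_{n+1})$, and since ${\bm x}(n+1,\cdot)$ is strictly decreasing this forces $v_n<v_{n+1}$. For the upper bound, note that ${\bm x}(n,v_n)<{\bm L}(v_n)$ because the tail $c\int_n^{\infty}(s^{q+1}-v_n^{q+1})^{-1/2}\,ds$ is strictly positive; thus ${\bm L}(v_n)>L/2={\bm L}(v^*)$, and as ${\bm L}$ is strictly decreasing we get $v_n<v^*$ for every $n$. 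Consequently $(v_n)$ is increasing and bounded, so $v_n\nearrow v_\infty$ for some $v_\infty\in(0,v^*]$, where $v_\infty\ge v_1>0$.

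Finally I would pass to the limit in the defining relation ${\bm x}(n,v_n)=L/2$. Writing
\[
{\bm L}(v_n)-{\bm x}(n,v_n)=c\int_{n}^{\infty}\left(s^{q+1}-v_n^{q+1}\right)^{-1/2}\,ds
\le c\int_{n}^{\infty}\left(s^{q+1}-(v^*)^{q+1}\right)^{-1/2}\,ds,
\]
the right-hand side tends to $0$ as $n\to\infty$ because the integral over $(v^*,\infty)$ converges. Combining this with continuity of ${\bm L}$, which gives ${\bm L}(v_n)\to{\bm L}(v_\infty)$, yields ${\bm x}(n,v_n)\to{\bm L}(v_\infty)$. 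Since ${\bm x}(n,v_n)=L/2$ for every $n$, we obtain ${\bm L}(v_\infty)=L/2={\bm L}(v^*)$, and injectivity of the strictly decreasing function ${\bm L}$ forces $v_\infty=v^*$. This establishes $v_n\nearrow v^*$.

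The step I expect to require the most care is this last limit, where both arguments of ${\bm x}(n,v_n)$ vary simultaneously; I handle it by splitting the discrepancy into the tail $c\int_n^{\infty}(\cdots)\,ds$, bounded uniformly in $n$ using $v_n\le v^*$, plus the continuity increment ${\bm L}(v_n)-{\bm L}(v_\infty)$. The integral representation derived in the first paragraph is what makes both the tail estimate and the monotonicity in each variable transparent, while verifying the boundary behaviour ${\bm x}(n,0^+)=+\infty$ (the only place the hypothesis $q>1$ is used) and the sign of $\partial_{v_l}{\bm x}$ are the remaining routine points.
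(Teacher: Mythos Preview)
Your proof is correct and follows essentially the same route as the paper: use the intermediate value theorem and monotonicity of $v_l\mapsto{\bm x}(n,v_l)$ to produce $v_n$, show the sequence is increasing and bounded by $v^*$, then identify the limit via ${\bm L}$. Your change of variables $s=uv_l$ is a nice touch that makes the monotonicity and tail estimates more transparent, and your treatment of the limit step (splitting off the uniformly controlled tail $c\int_n^\infty(s^{q+1}-(v^*)^{q+1})^{-1/2}ds$) is more explicit than the paper's one-line assertion that the limit satisfies ${\bm x}(\infty,v^{**})=L/2$.
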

\begin{proof}
Recall that $v^*$ is the unique solution of ${\bm x}(\infty,v^*) = L/2$, i.e.,
\[
(v^*)^{1-\frac{q+1}{2}}\left(\frac{q+1}{4}\right)^{1/2} \int_{1}^{\infty} \left(u^{q+1}-1\right)^{-1/2}du = L/2.
\]
This implies in particular
\[
\bm{x}(1,v^*) =(v^*)^{1-\frac{q+1}{2}}\left(\frac{q+1}{4}\right)^{1/2} \int_{1}^{1/v^*} \left(u^{q+1}-1\right)^{-1/2}du < L/2.
\]
Furthermore, the function $v_l \mapsto {\bm x}(1,v_l)$ is continuous on $(0,v^*]$ and increases to $\infty$ as $v_l \searrow 0.$
This implies that there exists $v_1 < v^*$ satisfying $\bm{x}(1,v_1) = L/2.$ Now note $\bm{x}(2,v_1) > L/2$ and $\bm{x}(2,v^*) < L/2.$
Applying the same argument gives $v_2 \in (v_1,v^*)$ satisfying $\bm{x}(2,v_2) =L/2$. Repeating the same argument inductively
gives us an increasing sequence $v_n$ bounded by $v^*$ solving $\bm{x}(n,v_n) = L/2.$ The limit $v^{**}$ of this sequence
satisfies $\bm{x}(\infty,v^{**}) = L/2$. Recall that $v^*$ is the unique solution of this equation. This yields $v_n \nearrow v^*.$
\end{proof}
We can now state and prove the generalization of \cite[Theorem 4]{krus:popi:seze:18} to $q > 1$:
\begin{theorem}
For $q > 1$ the minimal supersolution of \eqref{eq:bsdebrownian1d} with terminal condition $Y_T = \infty \cdot {\bm 1}_{\{\tau \le T \}}$
is continuous at time $T$.
\end{theorem}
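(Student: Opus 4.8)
The plan is to treat this theorem as essentially a corollary of Theorem \ref{t:xi1new}: once both terminal times $T$ and $\tau$ are shown to be solvable in the sense of Definition \ref{d:solvable_rtt}, the conclusion $\lim_{t\to\infty}\Ymin_{t\wedge T}=\infty\cdot{\bm 1}_{\{\tau\le T\}}$ follows by applying that theorem with $S=T$. The explicit ODE constructions preceding the statement furnish exactly the two $\infty$-supersolutions required. As a preliminary remark I would note that, since we work in the Brownian filtration $\bF^W$, which is continuous, the left-continuity requirement built into Definition \ref{d:solvable_rtt} holds automatically at every stopping time, in particular at both $T$ and $\tau$.

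Next I would verify that $T$ is solvable. The backward ODE $\dot y = -f(y) = y^q$ with terminal value $y^{(k)}_T = k$ has solution $y^{(k)}_t = \bigl(k^{1-q}+(q-1)(T-t)\bigr)^{1-p}$, a deterministic solution of \eqref{eq:bsdebrownian1d} with vanishing martingale part. Because $q>1$ we have $k^{1-q}\to 0$, so $y^{(k)}_t \nearrow y_t = ((q-1)(T-t))^{1-p}$ of \eqref{e:trivialsol} as $k\to\infty$. Thus $y_\cdot$ is the $\infty$-supersolution realized as the increasing limit of constant-terminal-value solutions, and $T$ is solvable.

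Then I would verify that $\tau$ is solvable using the functions built in the two lemmas above. By Itô's formula together with the ODE \eqref{e:ode}, for each $n$ the process $Y^{\tau,\infty,n}_t \doteq {\bm v}(W_t,v_n)$ satisfies, for $s\le t\le\tau$, $Y^{\tau,\infty,n}_s = Y^{\tau,\infty,n}_t - \int_s^t {\bm v}(W_r)^q\,dr - \int_s^t {\bm v}'(W_r)\,dW_r$; since $-{\bm v}(W_r)^q = f(Y^{\tau,\infty,n}_r)$, this is precisely \eqref{eq:bsdebrownian1d} on $[\![0,\tau]\!]$ with $Z_r=-{\bm v}'(W_r)$. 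Because $W_\tau\in\{0,L\}$ and the choice ${\bm x}(n,v_n)=L/2$ gives ${\bm v}(0,v_n)={\bm v}(L,v_n)=n$, the terminal value of $Y^{\tau,\infty,n}$ at $\tau$ is the constant $n$, so by uniqueness it is the solution with terminal condition $n$. By the comparison principle this sequence is non-decreasing, and as $v_n\nearrow v^*$ it increases to $Y^{\tau,\infty}_t = {\bm v}(W_t,v^*)$, which by Lemma \ref{l:bmx} blows up as $W_t$ approaches $\{0,L\}$, i.e.\ $\lim_{t\to\infty}Y^{\tau,\infty}_{t\wedge\tau}=+\infty$. Hence $Y^{\tau,\infty}$ is the required $\infty$-supersolution and $\tau$ is solvable.

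With both $T$ and $\tau$ solvable, invoking Theorem \ref{t:xi1new} yields the continuity at $T$ and finishes the proof. The only genuinely delicate step is the solvability of $\tau$: one must confirm that the explicit candidate ${\bm v}(W_\cdot,v^*)$ satisfies all the requirements of Definition \ref{def:sol_sing_BSDE_rtt} — in particular that the approximating stopping times (for instance the exit times of $W$ from intervals $[\eta,L-\eta]$ shrinking to $\{0,L\}$, or the level-$n$ hitting times of $Y^{\tau,\infty}$) can be chosen to deliver the moment bounds of condition (1), and that the convergence $Y^{\tau,\infty,n}\nearrow Y^{\tau,\infty}$ is genuine. All of this rests on the finiteness ${\bm L}(v^*)=L/2<\infty$, which is exactly where the hypothesis $q>1$ enters, together with the a priori estimates of Theorem \ref{thm:rand_time_exist_sol_BSDE} applied to the bounded terminal data $n$.
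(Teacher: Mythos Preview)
Your proposal is correct and follows essentially the same route as the paper: verify that $T$ is solvable (you give the explicit approximating family $y^{(k)}$; the paper simply notes that deterministic times are solvable), verify that $\tau$ is solvable via the explicit supersolution ${\bm v}(W_t,v^*)$ obtained as the increasing limit of the solutions ${\bm v}(W_t,v_n)$ with terminal value $n$, and then invoke Theorem~\ref{t:xi1new}. The paper makes the localizing sequence $\tau_n=\inf\{t:W_t\notin[1/n,L-1/n]\}$ explicit, which is exactly the choice you suggest in your closing paragraph.
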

\begin{proof}
By the previous lemma there exists $v_n \nearrow v^*$ that solves ${\bm x}(n,v_n) = L/2$. It follows from this and
Lemma \ref{l:bmx} that  ${\bm v}(\cdot,v_n)$ solves \eqref{e:ode} on $[0,L]$ with terminal condition $n$ on both sides
and that ${\bm v}(\cdot, v_n) \rightarrow {\bm v}(\cdot, v^*)$. The comparison principle for the equation \eqref{e:ode}
implies that in fact  ${\bm v}(\cdot, v_n) \nearrow {\bm v}(\cdot, v^*)$. Now define the processes
\[
Y_t^{\tau,n} = {\bm v}(W_t,v_n) , Y_t^{\tau,\infty} = {\bm v}(W_t,v^*).
\]e:eq:bsdebrownian
It\^o's formula implies that $Y_t^{\tau,n}$ solves \eqref{eq:bsdebrownian1d} with terminal condition $Y_\tau = n$.
Define $\tau_n$ be the first time $W$ hits $[1/n,L-1/n]$. It\^o's formula implies $Y^{\tau,\infty}$ satisfies
\eqref{e:sdepart} (with $\beta_n = \tau_n$) and the definition of ${\bm v}(\cdot,v^*)$ and the continuity of the sample paths of $W$ imply
\eqref{eq:term_cond_super_sol} with $\xi = \infty$. Therefore, $Y^{\tau,\infty}$ is a weak supersolution of \eqref{eq:bsdebrownian1d}
with terminal condition $Y_\tau = \infty.$
Furthermore,  ${\bm v}(\cdot, v_n) \nearrow {\bm v}(\cdot, v^*)$ implies
$Y_t^{\tau,n} \nearrow Y_t^{\tau,\infty}$. These imply that $\tau$ satisfies all of the conditions of
being solvable. $T$ is also solvable because it is deterministic. Theorem \ref{t:xi1new} now implies the statement of the present theorem.
\end{proof}

\section{Terminal condition $\xi_2$} \label{sect:term_cond_xi_2}

We assume $S$ to be solvable. This means that there exists a minimal supersolution $Y^{S,\infty}\ge 0$
to \eqref{eq:bsdebrownian} with terminal condition $Y^{S,\infty}_S = \infty$ and a sequence of stopping times
$S_n \nearrow S$ such that $Y^{S,\infty}_t \le n$ for $t \le S_n.$ (Definitions \ref{def:sol_sing_BSDE_rtt} and \ref{d:solvable_rtt}, Lemma \ref{lem:solvability_min_sol} and Corollary \ref{coro:seq_rtt_bounded_Y}).

Our continuity result is as follows:
\begin{theorem}\label{t:xi2}
Suppose $S$ is solvable and $\tau$ is an arbitrary stopping time such that ${\mathbb P}(S= \tau) =0.$
Then BSDE \eqref{eq:bsdebrownian} has a supersolution in the time interval $[\![0,S]\!]$
with terminal condition $Y_S = \xi_2 = \infty \cdot {\bm 1}_{\{\tau > S\}}.$ Furthermore this supersolution
is continuous at $S$:
\begin{equation}\label{e:contxi2}
\lim_{t\rightarrow \infty} Y_{S \wedge t} = \xi_2.
\end{equation}
\end{theorem}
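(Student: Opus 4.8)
The plan is to realize $\Ymin$ as the increasing limit of the solutions $Y^{(k)}$ of \eqref{eq:bsdebrownian} with bounded terminal condition $\xi_2\wedge k=k\,\mathbf{1}_{\{\tau>S\}}$ at time $S$; existence of this limit as the minimal supersolution follows from the solvability of $S$ (Lemma \ref{lem:solvability_min_sol}). On the set $\{\tau>S\}$ there is nothing to prove: the supersolution property \eqref{eq:term_cond_super_sol} gives $\liminf_{t\to+\infty}\Ymin_{t\wedge S}\ge\xi_2=+\infty$ there, so the limit equals $+\infty=\xi_2$. Everything therefore reduces to proving
\begin{equation*}
\lim_{t\to+\infty}\Ymin_{t\wedge S}=0\quad\text{a.s. on }\{\tau\le S\}.
\end{equation*}

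The core of the argument is a decoupling of the BSDE on the set $B\doteq\{\tau\le S\}$. Set $\rho\doteq\tau\wedge S$. Since $\bP(\tau=S)=0$ we have $B=\{\tau<S\}$ a.s., and $B\in\cF_{\rho}$. Writing \eqref{eq:bsdebrownian} for $Y^{(k)}$ on the stochastic interval $[\![\rho,S]\!]$ and multiplying by $\mathbf{1}_B$, which is $\cF_\rho$-measurable and may therefore be taken inside the stochastic integral, I would obtain that $(\bar Y^{(k)},\bar Z^{(k)})\doteq(\mathbf{1}_B Y^{(k)},\mathbf{1}_B Z^{(k)})$ solves, on $[\![\rho,S]\!]$, a BSDE with terminal value $\mathbf{1}_B Y^{(k)}_S=\mathbf{1}_B\,k\,\mathbf{1}_{\{\tau>S\}}=0$ and driver
\begin{equation*}
\tilde f(r,y,z)=f(r,y,z)-\mathbf{1}_{B^c}f^0_r .
\end{equation*}
Indeed, the driver seen by $\bar Y^{(k)}$ is $\mathbf{1}_B f(r,Y^{(k)}_r,Z^{(k)}_r)=\mathbf{1}_B f(r,\bar Y^{(k)}_r,\bar Z^{(k)}_r)$ (as $\bar Y^{(k)}=Y^{(k)}$ on $B$), and since $\bar Y^{(k)}=\bar Z^{(k)}=0$ on $B^c$ this equals $f(r,\bar Y^{(k)}_r,\bar Z^{(k)}_r)-\mathbf{1}_{B^c}f^0_r=\tilde f(r,\bar Y^{(k)}_r,\bar Z^{(k)}_r)$. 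The driver $\tilde f$ does not depend on $k$, it inherits Conditions {\bf (A)} from $f$, and $\tilde f^0=\mathbf{1}_B f^0$ is bounded by \ref{B2}; together with \ref{B3} and the null terminal condition, the integrability assumptions \eqref{eq:int_cond_random_time} and \eqref{eq:int_cond_random_time_2} hold exactly as in the proof of Theorem \ref{t:firsttheoremxi1}.

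Consequently Theorem \ref{thm:rand_time_exist_sol_BSDE} applies to the BSDE with driver $\tilde f$ and terminal value $0$ at $S$, yielding a unique solution $\bar Y$, which is manifestly $k$-independent. Uniqueness, localized to the stochastic interval $[\![\rho,S]\!]$, then forces $\mathbf{1}_B Y^{(k)}=\bar Y$ on $[\![\rho,S]\!]$ for every $k$; letting $k\nearrow\infty$ gives $\mathbf{1}_B\Ymin=\bar Y$ on $[\![\rho,S]\!]$. Being a classical solution with the bounded terminal value $0$, $\bar Y$ is continuous at $S$ and satisfies $\lim_{t\to+\infty}\bar Y_{t\wedge S}=\bar Y_S=0$. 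On $B$ one has $\rho=\tau<S$, so $t\wedge S\in[\![\rho,S]\!]$ for every $t\ge\tau$ and hence $\Ymin_{t\wedge S}=\bar Y_{t\wedge S}$ for all such $t$; letting $t\to+\infty$ gives $\lim_{t\to+\infty}\Ymin_{t\wedge S}=0$ on $B$. Together with the $\{\tau>S\}$ case this establishes \eqref{e:contxi2}.

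The step I expect to be the main obstacle is the rigorous justification of this decoupling: one must verify that multiplying the BSDE by the indicator $\mathbf{1}_B$ produces a genuine BSDE closed on the stochastic interval $[\![\rho,S]\!]$ (the pull-in of $\mathbf{1}_B$ through the stochastic integral being legitimate precisely because $B\in\cF_\rho$), and that the uniqueness statement of Theorem \ref{thm:rand_time_exist_sol_BSDE}, phrased for terminal time $S$, can be localized to $[\![\rho,S]\!]$ so as to obtain the $k$-independence of $\mathbf{1}_B Y^{(k)}$. The hypothesis $\bP(\tau=S)=0$ enters exactly here: it guarantees $B=\{\tau<S\}$, so that the blow-up of $Y^{(k)}$ on $\{\tau>S\}$ and the vanishing terminal value on $B$ are genuinely separated in time, and the null set $\{\tau=S\}$ may be discarded.
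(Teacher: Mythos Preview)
Your argument is essentially correct and takes a genuinely different route from the paper's. The paper does not multiply by $\mathbf{1}_B$ and localize to $[\![\rho,S]\!]$; instead it exploits the approximating sequence $S_n\nearrow S$ coming from solvability (Corollary~\ref{coro:seq_rtt_bounded_Y}). For each $n$ it introduces the auxiliary terminal condition $L\,\mathbf{1}_{\{\tau>S_n\}}$, proves a structural lemma (Lemma~\ref{l:structureofYSxi2Ln}) showing that the corresponding solution equals $Y^{S,0}$ on $]\!]S_n,S]\!]$ over the event $\{\tau\le S_n\}$, and then uses the comparison $L\,\mathbf{1}_{\{\tau>S\}}\le L\,\mathbf{1}_{\{\tau>S_n\}}$ to bound $Y^{S,\xi_2\wedge L}$ by $Y^{S,0}$ there. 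Letting $L\nearrow\infty$ and then $n\nearrow\infty$ (where $\mathbb P(\tau=S)=0$ is used to get $\bigcup_n\{\tau\le S_n\}=\{\tau\le S\}$) finishes the proof. Your approach is more direct---one decoupling rather than a double limit---while the paper's stays entirely within the $[\![0,S]\!]$-framework of Theorem~\ref{thm:rand_time_exist_sol_BSDE} and the comparison principle, never needing uniqueness on a stochastic subinterval.

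One point deserves tightening. You invoke Theorem~\ref{thm:rand_time_exist_sol_BSDE} for the driver $\tilde f(r,y,z)=f(r,y,z)-\mathbf{1}_{B^c}f^0_r$ on $[\![0,S]\!]$, but $\tilde f$ is \emph{not} progressively measurable there: $\mathbf{1}_{B^c}=\mathbf{1}_{\{\tau>S\}}$ is only $\cF_\rho$-measurable, not $\cF_r$-measurable for $r<\rho$. So Theorem~\ref{thm:rand_time_exist_sol_BSDE} does not directly produce your reference solution $\bar Y$. The clean fix is to dispense with $\tilde f$ altogether: the process $\mathbf{1}_B Y^{S,0}$ (with $Y^{S,0}$ the classical solution on $[\![0,S]\!]$ with terminal value $0$ and driver $f$) satisfies exactly the same equation on $[\![\rho,S]\!]$ as $\mathbf{1}_B Y^{(k)}$, by the same $\cF_\rho$-measurability argument you already give. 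A standard It\^o estimate on $e^{2\chi r}|\mathbf{1}_B Y^{(k)}_r-\mathbf{1}_B Y^{S,0}_r|^2$ over $[\![\rho,S]\!]$ then yields $\mathbf{1}_B Y^{(k)}=\mathbf{1}_B Y^{S,0}$ on that interval, which is exactly the $k$-independence you need, and the continuity of $Y^{S,0}$ at $S$ concludes. This is the localized uniqueness you correctly flagged as the crux; it is routine once phrased this way.
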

This generalizes \cite[Theorem 2]{ahmadi2019backward} which assumes deterministic terminal times, to random terminal times.
The main idea of the proof of \cite[Theorem 2]{ahmadi2019backward} generalized to the current setup is as follows: 
we construct a sequence of supersolutions to \eqref{eq:bsdebrownian} with terminal
conditions $ Y_{S} = \infty \cdot {\bm 1}_{\{\tau > S_n \}}$ where $S_n$ is the sequence of stopping times approximating $S$.
Note that these processes are all defined over the time interval $[\![0,S]\!]$, $S_n < S$ allows one to prove they are all continuous at time $S$. This, $\infty \cdot {\bm 1}_{\{\tau > S_n \}} \ge \infty \cdot {\bm 1}_{\{\tau > S \}}$
and comparison principle for BSDE allow one to argue that $Y^{S,\xi_2}$ is also continuous at $S$, which is the result we seek.

Let us define several processes that will be useful in the proof of Theorem \ref{t:xi2}, as solution of BSDE \eqref{eq:bsdebrownian} over the time interval $[\![0,S]\!]$, changing the terminal condition at time $\S$:
\begin{itemize}
\item $Y^{S,L}$ corresponds to the terminal condition $L$ ;
\item $Y^{S,0}$ to the  terminal condition $0$ ;
\item $Y^{S,\xi_2,L,n}$ to the terminal
condition $L \cdot {\bm 1}_{\{\tau > S_n \}}.$ 
\end{itemize}
Note that these terminal conditions are $\cF_\S$-measurable and bounded. Hence from Theorem \ref{thm:rand_time_exist_sol_BSDE} and the conditions {\bf (B)}, these solutions are well defined and unique (in the sense of Definition \ref{def:sol_BSDE_rtt}).

Let $Y^{S_n,\xi_2,L}$ be the solution of \eqref{eq:bsdebrownian} in the time interval $[\![0,S_n]\!]$ with terminal condition
\[
Y_{S_n} = Y^{S,L}_{S_n} \cdot {\bm 1}_{\{\tau > S_n\}} + Y^{S,0}_{S_n} \cdot {\bm 1}_{\{\tau \le S_n \}}.
\]
The existence and uniqueness of $Y^{S_n,\xi_2,L}$ comes from the estimates on $Y^{S,L}$ and $Y^{S,0}$ in Theorem \ref{thm:rand_time_exist_sol_BSDE}. 
We begin our argument with the following lemma. 
\begin{lemma}\label{l:structureofYSxi2Ln}
The process 
 $Y^{S,\xi_2,L,n}$ 
has the following structure:
\begin{equation}\label{e:structureofYSxi2Ln}
Y^{\S,\xi_2,L,n}_t = Y^{S_n,\xi_2,L}_t {\bm 1}_{t \le S_n} + 
Y^{\S,0}_t \cdot {\bm 1}_{t > S_n} \cdot {\bm 1}_{\tau \le S_n} + 
Y^{\S,L}_t \cdot {\bm 1}_{t > S_n} \cdot {\bm 1}_{\tau > S_n}.
\end{equation}
\end{lemma}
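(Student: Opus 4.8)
The plan is to take the right-hand side of \eqref{e:structureofYSxi2Ln} as a candidate process, exhibit an accompanying control process, verify that the pair solves BSDE \eqref{eq:bsdebrownian} on $[\![0,S]\!]$ with the bounded, $\cF_S$-measurable terminal condition $L\cdot{\bm 1}_{\{\tau > S_n\}}$, and then conclude by the uniqueness part of Theorem \ref{thm:rand_time_exist_sol_BSDE}. Concretely, I would set
\[
\hat Y_t = Y^{S_n,\xi_2,L}_t {\bm 1}_{t\le S_n} + Y^{S,0}_t {\bm 1}_{t> S_n}{\bm 1}_{\tau\le S_n} + Y^{S,L}_t {\bm 1}_{t> S_n}{\bm 1}_{\tau> S_n}
\]
and define $\hat Z$ by the same indicator structure from $Z^{S_n,\xi_2,L}$, $Z^{S,0}$, $Z^{S,L}$. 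The first observation I would record is that, $\tau$ and $S_n$ being stopping times, both events $\{\tau\le S_n\}$ and $\{\tau> S_n\}$ belong to $\cF_{S_n}$; this is precisely what lets me pull these indicators inside the It\^o and Lebesgue integrals taken from time $S_n$ onward.

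The core step is to check the equation on the two subintervals separately. On $[\![S_n,S]\!]$, I would write the defining integral equations of $Y^{S,0}$ (terminal value $0$) and of $Y^{S,L}$ (terminal value $L$), multiply them by ${\bm 1}_{\{\tau\le S_n\}}$ and ${\bm 1}_{\{\tau> S_n\}}$ respectively, and add. Because the two events are $\cF_{S_n}$-measurable and partition $\Omega$, the indicators pass through the integrals, the terminal values combine to $L{\bm 1}_{\{\tau> S_n\}}$, and the driver terms combine to $f(r,\hat Y_r,\hat Z_r)$, since on each event $\hat Y$ and $\hat Z$ coincide with the corresponding pair. This yields
\[
\hat Y_t = L{\bm 1}_{\{\tau> S_n\}} + \int_t^S f(r,\hat Y_r,\hat Z_r)\,dr - \int_t^S \hat Z_r\,dW_r, \qquad t\in[\![S_n,S]\!].
\]
On $[\![0,S_n]\!]$, $\hat Y$ equals $Y^{S_n,\xi_2,L}$, which by its very definition solves \eqref{eq:bsdebrownian} on $[\![0,S_n]\!]$ with terminal value $Y^{S,L}_{S_n}{\bm 1}_{\{\tau> S_n\}} + Y^{S,0}_{S_n}{\bm 1}_{\{\tau\le S_n\}}$; this terminal value is exactly the right limit of $\hat Y$ at $S_n$, so the two pieces agree at the junction.

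I would then splice the two pieces: adding the integral equation on $[\![0,S_n]\!]$ to the one above evaluated at $t=S_n$ shows that $(\hat Y,\hat Z)$ satisfies the integral form of \eqref{eq:bsdebrownian} from any $t\in[\![0,S]\!]$ up to $S$, with terminal condition $L{\bm 1}_{\{\tau> S_n\}}$. As this terminal condition is bounded and $\cF_S$-measurable and Conditions {\bf (B)} hold, Theorem \ref{thm:rand_time_exist_sol_BSDE} gives a unique solution, forcing $\hat Y = Y^{S,\xi_2,L,n}$, which is \eqref{e:structureofYSxi2Ln}. The main obstacle is the pasting on $[\![S_n,S]\!]$: one must argue carefully that the $\cF_{S_n}$-measurable indicators commute with the It\^o integrals over $[\![S_n,S]\!]$ and that the nonlinear driver respects the partition, i.e.\ that $f(r,\hat Y_r,\hat Z_r)={\bm 1}_{\{\tau\le S_n\}}f(r,Y^{S,0}_r,Z^{S,0}_r)+{\bm 1}_{\{\tau> S_n\}}f(r,Y^{S,L}_r,Z^{S,L}_r)$; verifying the consistency of the two definitions at the junction $t=S_n$ is the other point requiring care.
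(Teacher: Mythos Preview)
Your proposal is correct and follows essentially the same approach as the paper: define the candidate pair $(\hat Y,\hat Z)$ by the indicator decomposition, verify it solves the BSDE on $[\![0,S]\!]$ with terminal value $L{\bm 1}_{\{\tau>S_n\}}$ (using $\cF_{S_n}$-measurability of the events to pull indicators through the integrals and the nonlinear driver), and conclude by uniqueness. The paper organizes the verification via three cases on the relative position of $t$, $T$, and $S_n$, whereas you split into the two subintervals $[\![0,S_n]\!]$ and $[\![S_n,S]\!]$ and splice at the junction; these are minor presentational differences of the same argument.
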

\begin{proof}
First, $S_n < S$ implies that the right side of \eqref{e:structureofYSxi2Ln} defines an adapted and continuous process, denoted by $\cY$, with bounded terminal condition $Y_\S = L \cdot {\bm 1}_{\{\tau > \S_n\}}$. Let us show that $\cY$ satisfies also \eqref{eq:bsdebrownian}. We define similarly:
\begin{equation*}
\cZ_t = Z^{S_n,\xi_2,L}_t {\bm 1}_{t \le S_n} + 
Z^{\S,0}_t \cdot {\bm 1}_{t > S_n} \cdot {\bm 1}_{\tau \le S_n} + 
Z^{\S,L}_t \cdot {\bm 1}_{t > S_n} \cdot {\bm 1}_{\tau > S_n}.
\end{equation*}
 For any $0\leq t \leq T$, let us distinguish several cases:
\begin{itemize}
\item If $0\leq t \leq T\leq \S_n < \S$, then since $Y^{S_n,\xi_2,L}$ solves \eqref{eq:bsdebrownian} on $[\![ 0, \S_n]\!]$:
\begin{align*}
\cY_{t\wedge \S} & = Y^{S_n,\xi_2,L}_t = Y^{S_n,\xi_2,L}_T + \int_t^T f(u,Y^{S_n,\xi_2,L}_u,Z^{S_n,\xi_2,L}_u) du - \int_t^T Z^{S_n,\xi_2,L}_u dW_u \\
&= \cY_{T\wedge \S} + \int_{t\wedge \S}^{T\wedge \S} f(u,\cY_u,Z^{S_n,\xi_2,L}_u) du - \int_{t\wedge \S}^{T\wedge \S} Z^{S_n,\xi_2,L}_u dW_u.
\end{align*}
\item If $\S_n < t \leq T$, then 
\begin{align*}
\cY_{t\wedge \S} & = Y^{\S,0}_{t\wedge \S}  \cdot {\bm 1}_{\tau \le \S_n} +  Y^{\S,L}_{t\wedge \S}  \cdot {\bm 1}_{\tau > \S_n} \\
&= \cY_{T\wedge \S} + \int_{t\wedge \S}^{T\wedge \S} \left[ f(u,Y^{\S,0}_u,Z^{\S,0}_u)   \cdot {\bm 1}_{\tau \le \S_n} + f(u,Y^{\S,L}_u,Z^{\S,L}_u) \cdot {\bm 1}_{\tau > \S_n} \right] du \\
& - \int_{t\wedge \S}^{T\wedge \S} \left[ Z^{\S,0}_u\cdot {\bm 1}_{\tau \le \S_n}   + Z^{\S,L}_u\cdot {\bm 1}_{\tau > \S_n} \right] dW_u \\
&= \cY_{T\wedge \S} + \int_{t\wedge \S}^{T\wedge \S} f(u,\cY_u ,Z^{\S,0}_u \cdot {\bm 1}_{\tau \le \S_n} + Z^{\S,L}_u\cdot {\bm 1}_{\tau > \S_n})du \\
& - \int_{t\wedge \S}^{T\wedge \S} \left[ Z^{\S,0}_u\cdot {\bm 1}_{\tau \le \S_n}   + Z^{\S,L}_u\cdot {\bm 1}_{\tau > \S_n} \right] dW_u 
\end{align*}
since both sets $\{\tau \le \S_n\}$ and $\{\tau < \S_n\}$ are $\cF_{\S_n}$-measurable. 
\item If $0\leq t \leq S_n <   T$, then 
\begin{align*}
\cY_{t\wedge \S} & = Y^{S_n,\xi_2,L}_t = \cY_{ \S_n} + \int_{t\wedge \S}^{\S_n} f(u,\cY_u,Z^{S_n,\xi_2,L}_u) du - \int_{t\wedge \S}^{ \S_n} Z^{S_n,\xi_2,L}_u dW_u \\
& = Y^{\S,0}_{\S_n}  \cdot {\bm 1}_{\tau \le \S_n} +  Y^{\S,L}_{\S_n}  \cdot {\bm 1}_{\tau > \S_n} \\
& + \int_{t\wedge \S}^{ \S_n} f(u,\cY_u,Z^{S_n,\xi_2,L}_u) du - \int_{t\wedge \S}^{ \S_n} Z^{S_n,\xi_2,L}_u dW_u \\
& = \cY_{T\wedge \S} + \int_{\S_n}^{T\wedge \S} f(u,\cY_u ,Z^{\S,0}_u \cdot {\bm 1}_{\tau \le \S_n} + Z^{\S,L}_u\cdot {\bm 1}_{\tau > \S_n})du \\
& - \int_{\S_n}^{T\wedge \S} \left[ Z^{\S,0}_u\cdot {\bm 1}_{\tau \le \S_n}   + Z^{\S,L}_u\cdot {\bm 1}_{\tau > \S_n} \right] dW_u \\
& + \int_{t\wedge \S}^{ \S_n} f(u,\cY_u,Z^{S_n,\xi_2,L}_u) du - \int_{t\wedge \S}^{ \S_n} Z^{S_n,\xi_2,L}_u dW_u . 
\end{align*}
\end{itemize}
Hence we have verified that $(\cY,\cZ)$ solves the BSDE \eqref{eq:bsdebrownian}. The statement of the lemma follows from the uniqueness of such a solution (Theorem \ref{thm:rand_time_exist_sol_BSDE}).
\end{proof}
We now give
\begin{proof}[Proof of Theorem \ref{t:xi2}]
Let $Y^{S,\xi_2 \wedge L}$ be the solution of \eqref{eq:bsdebrownian} with bounded terminal condition
$Y_S = \xi_2 \wedge L =  L \cdot {\bm 1}_{\{\tau > \S\}}.$ The inequality $\xi_2 \wedge L \le L$ implies
\[
Y^{S,\xi_2 \wedge L}_t \le Y^{S,L}_t, t \le \S.
\]
This and $Y^{S,L}_{t} \le n$ for $t \le \S_n$ imply that, 
if we define
\[
Y^{S,\xi_2}_t = \lim_{L \nearrow \infty} Y^{S,\xi_2\wedge L}_t,
\]
then 
$Y^{S,\xi_2}$
is a classical solution of \eqref{eq:bsde} on the time interval $[\![0,S_n]\!].$
That \eqref{e:contxi2} holds over the event $\{\xi_2 = \infty\} = \{ \tau > S\}$ follows from the fact
that $Y^{S,\xi_2}$ is constructed by approximation from below (see \cite{popi:06}). For completeness, 
we reproduced this argument:
note
\[
\liminf_{t\rightarrow \infty} Y^{S,\xi_2}_{t \wedge S} \ge \liminf_{t\rightarrow \infty} Y^{S,\xi_2 \wedge L}_{t\wedge S} = \xi_2 \wedge L
\]
for all $L$. Letting $L\nearrow \infty$ implies
\[
\liminf_{t\rightarrow \infty} Y^{S,\xi_2}_{t \wedge S} \ge \xi_2.
\]
In particular,
\[
\lim_{t\rightarrow \infty} Y^{S,\xi_2}_{t \wedge S} =\liminf_{t\rightarrow \infty} Y^{S,\xi_2}_{t \wedge S} = \xi_2 = \infty
\]
over the event $\{\tau > S\}.$ This proves \eqref{e:contxi2} over the event $\{\tau > S \}.$

It remains to prove \eqref{e:contxi2} over the event $\{\tau \le S \}.$
Recall the process $Y^{S,\xi_2,L,n}$ of \eqref{e:structureofYSxi2Ln} that is the solution of 
\eqref{eq:bsdebrownian} over the interval $[\![0,S]\!]$ with terminal condition $ Y_S  = L \cdot {\bm 1}_{\{\tau > S_n\}}.$
That $S_n \le S$ implies
\[
L \cdot {\bm 1}_{\{\tau > S_n\}} \ge L \cdot {\bm 1}_{\{\tau > S\}}.
\]
This and the comparison principle lead to
\[
Y^{S,\xi_2,L}_t \le Y^{S,\xi_2,L,n}_t,\quad \text{for } t \le \S.
\]
Lemma \ref{l:structureofYSxi2Ln} implies
\[
Y^{S,\xi_2,L,n}_t =Y^{S,0}_t, \text{for } t \in ]\!] S_n,S]\!] 
\]
over the event $\{\tau \le  S_n\}.$ Combining the last two displays we get
\[
Y^{S,\xi_2,L}_t \le Y^{S,0}_t, \text{for } t \in ]\!] S_n,S]\!] 
\]
over the event $\{\tau \le  S_n\}.$ The right side of the last inequality doesn't depend on $L$. Taking limits on the left gives
\[
Y^{S,\xi_2}_t \le Y^{S,0}_t, \text{for } t \in ]\!] S_n,S]\!].
\]
over the event $\{\tau \le  S_n\}.$  The right side of the above inequality is a classical solution of the BSDE \eqref{eq:bsdebrownian}
with $0$ terminal condition. Therefore, taking limits of both sides above give
\[
\limsup_{t\rightarrow \infty} Y^{S,\xi_2}_{t \wedge S} \le \lim_{t\rightarrow \infty} Y^{S,0}_{t\wedge S} = 0.
\]
By its construction, $Y^{S,\xi_2} \ge 0.$ This and the last display imply
\[
\lim_{t\rightarrow \infty} Y^{S,\xi_2}_{t \wedge S} = 0.
\]
over the event $\{\tau \le  S_n\}.$ 
Finally, $S_n \nearrow S$ and ${\mathbb P}(\tau = S) = 0$ imply 
 $\bigcup_{n=1}^\infty \{\tau \le  S_n\} = \{\tau \le S\}.$ This and the last display imply
\[
\lim_{t\rightarrow \infty} Y^{S,\xi_2}_{t \wedge S} = 0 = \xi_2
\]
over the event $\{\tau \le S \}.$ This completes the proof of the theorem.
\end{proof}

\section{Conclusion}\label{s:conc}
The present work develops solutions to the BSDE \eqref{eq:bsde} with
random terminal time $S$ for a range of singular terminal values.
We do this by proving that the minimal supersolution is continuous at $S$
and attains the terminal value. A key ingredient of our framework and
our arguments is
the concept of a solvable stopping time with respect to the given BSDE
and the filtration, introduced in the present work. Solvability
means that the
the BSDE has a supersolution with value $\infty$ at the given stopping time.
We note that
a stopping time that has a positive density around $0$ is not solvable.
We also note that deterministic times as well as exit times of continuous
diffusion processes from smooth domains are solvable. A natural direction
for future work is to further understand the concept of solvability and 
identify other classes of solvable/nonsolvable stopping times.

\bibliography{biblio}
\end{document}